\documentclass{amsart}

\usepackage{float,graphicx,morefloats}

\oddsidemargin  -.1in
\textwidth      6.5in
\textheight     53\baselineskip
\evensidemargin -0.1in
\headheight     0.7in
\topmargin      -.8in

\newtheorem{theorem}{Theorem}[section]
\newtheorem{lemma}[theorem]{Lemma}

\newtheorem{conj}[theorem]{Conjecture}

\theoremstyle{definition}

\theoremstyle{remark}

\numberwithin{equation}{section}
\numberwithin{table}{section}

\newcommand{\br}[1]{\left\{#1\right\}}
\newcommand{\leg}[2]{\left(\frac{#1}{#2}\right)}

\newcommand{\PP}{\left<P\right>}
\newcommand{\tr}{\text{tr}}
\newcommand{\SL}{\text{SL}}
\newcommand{\C}{\mathbb{C}}

\newcommand{\Z}{\mathbb{Z}}
\newcommand{\F}{\mathbb{F}}

\begin{document}

\title[Moments of zeta functions]{Moments of zeta functions associated to hyperelliptic curves over finite fields}

\author{Michael O. Rubinstein}
%    Address of record for the research reported here
\address{Pure Mathematics, University of Waterloo,
200 University Ave W, Waterloo, Ontario, N2L3G1, Canada}
%    Current address
\thanks{The first author was supported in part by an NSERC Discovery Grant, and EPSRC grant  EP\/K034383\/1}

\author{Kaiyu Wu}
\address{Pure Mathematics, University of Waterloo,
200 University Ave W, Waterloo, Ontario, N2L3G1, Canada}
\thanks{The second author was supported by an NSERC USRA}

%    General info
\subjclass[2010 Mathematics Subject Classification]{Primary 11G20, 11M50; Secondary 14G10}

\date{April 1, 2014}

\keywords{moments, zeta function, hyperelliptic curve, quadratic function field}

\begin{abstract}
Let $q$ be an odd prime power, and $\mathcal H_{q,d}$ denote the set of square-free
monic polynomials $D(x) \in F_q[x]$ of degree $d$.
Katz and Sarnak showed that the moments, over $\mathcal H_{q,d}$,
of the zeta functions associated to the curves $y^2=D(x)$,
evaluated
at the central point, tend, as $q \to \infty$, to the moments of characteristic
polynomials, evaluated at the central point,
of matrices in $USp(2\lfloor (d-1)/2 \rfloor)$.
Using techniques that were originally developed for studying moments of
$L$-functions over number fields, Andrade and Keating conjectured an
asymptotic formula for the moments for $q$ fixed and $d \to \infty$. We
provide theoretical and numerical evidence in favour of their conjecture.
In some cases we are able to work out exact formulas for the moments
and use these to precisely determine the size of the remainder term in the predicted
moments.
\end{abstract}

\maketitle
\tableofcontents

\section{Introduction}
\label{sec:intro}

In this paper we provide theoretical and numerical evidence in support of a conjecture of
Andrade and Keating regarding the moments, at the central point,
of zeta functions associated to hyperelliptic curves
over finite fields of odd characteristic.

Relevant background on these zeta functions is provided in this section.
Section~\ref{sec:AK} describes the Andrade-Keating conjecture. We present numerical
support for the conjecture in Section~\ref{sec:data},
and describe the algorithms used in Section~\ref{sec:algorithms}.

In Sections~\ref{sec:d=3} and \ref{sec:d=4} we apply old work of Birch~\cite{B}
to obtain formulas for all the positive integer moments when $d=3$ or $4$.
We are also able, for $5\leq d \leq 9$, to use our data to
guess formulas for a few specific moments (for example the first three moments when $d=7$).
These are presented in Section~\ref{sec:guessing}.

We then derive, in Section~\ref{sec:series}, series expansions for Andrade and Keating's
conjectured formula. By comparing with the actual moments, derived or guessed,
we can precisely determine in certain cases the remainder term in
Andrade and Keating's formula for the moments.

\subsection{Zeta functions of quadratic function fields according to Artin}

Let $q$ be an odd prime power, and $D(x) \in \F_q[x]$ be a square-free monic polynomial of positive degree $d$.
Artin~\cite{Ar} developed the theory of quadratic function fields
in analogy to that of Dedekind for quadratic number fields.
Let $R$ be the ring:
\begin{equation}
    R=\br{a(x)+b(x)\sqrt{D(x)}: a(x),b(x) \in \F_q[x]}.
    \label{eq:R} \notag
\end{equation}

Inspired by Dedekind's work on algebraic number fields,
Artin established that all non-zero proper ideals of $R$
can be uniquely factored into prime ideals \cite{R} \cite{Ar}.
He further proved that every prime ideal
$\mathfrak{p}$ of $R$ divides some unique ideal $\PP$ of $R$, where $P$ is an
irreducible polynomial in $\F_q[x]$, and furthermore obtained the decomposition law:
\begin{equation}
    \label{eq:decomposition law}
    \PP =
    \begin{cases}
        \mathfrak{pp'}, &\text{if $P \not| D$ and $D$ is a square modulo $P$},\\
        \mathfrak{p}, &\text{if $P \not| D$ and $D$ is not a square modulo $P$},\\
        \mathfrak{p}^2, &\text{if $P | D$}.
    \end{cases}
\end{equation}
In the first case, explicitly: $\mathfrak{p}=\left<P,B+\sqrt{D}\right>$,
$\mathfrak{p'}=\left<P,B-\sqrt{D}\right>$, where $B(x)^2 = D(x) \mod P(x)$, in
the second case $\mathfrak{p}=\left<P\right>$, and in the third case
$\mathfrak{p}=\left<P,\sqrt{D}\right>$.
Artin thus defined, for $a,P\in \F_q[x]$, and $P$ irreducible, the `Legendre symbol'
\begin{equation}
    \label{eq:legendre}
    \leg{a}{P} =
    \begin{cases}
        1, &\text{if $P \not| a$ and $a$ is a square modulo $P$},\\
        -1, &\text{if $P \not| a$ and $a$ is not a square modulo $P$},\\
        0, &\text{if $P | a$}.
    \end{cases}
\end{equation}
One can extend it, multiplicatively, to non-irreducible polynomials, in analogy with the Jacobi symbol.
Let $b(x) \in \F_q[x]$, $b(x)\neq0$, monic, and $b(x)=
Q_1(x)^{\alpha_1}\ldots Q_r(x)^{\alpha_r}$ be the unique factorization
in $\F_q[x]$, of $b(x)$ into monic irreducible polynomials. Then define
\begin{equation}
    \label{eq:legendre b}
    \leg{a(x)}{b(x)} = \prod_1^r \leg{a(x)}{Q_j(x)}^{\alpha_j}.
\end{equation}
Artin proved the law of quadratic reciprocity for $a,b\in \F_q[x]$, relatively prime,
non-zero, and monic:
\begin{equation}
    \label{eq:quadratic reciprocity} \notag
    \leg{a}{b}\leg{b}{a} = (-1)^{\frac{|a|-1}{2}\frac{|b|-1}{2}}
    = (-1)^{\frac{q-1}{2}\deg{a}\deg{b}},
\end{equation}
where $|f|=q^{\deg{f}}$.

For ease of notation,
we define, for $n,D\in \F_q[x]$,
\begin{equation}
    \label{eq:chi_D}
    \chi_D(n) = \leg{D}{n}
\end{equation}
for $n\neq 0$, and $0$ for $n=0$.
Artin defined the zeta function associated to $R$ to be
\begin{equation}
    \label{eq:artin zeta}
    \zeta_R(s) = \sum_{\mathfrak{a}} N(\mathfrak{a})^{-s}, \qquad \Re{s}>1
\end{equation}
where the sum is over non-zero ideals $\mathfrak{a}$ of $R$, and $N(\mathfrak{a})$,
the absolute norm of the ideal $\mathfrak{a}$, denotes the number of residue
classes $R/\mathfrak{a}$. Artin also obtained the meromorphic continuation of $\zeta_R(s)$ to
$\C$ (see~\eqref{eq:artin zeta c}-\eqref{eq:chi sum} below) and its functional equation (see the next section).

Note, that the absolute norm is completely multiplicative,
$N(\mathfrak{a_1 a_2}) =N(\mathfrak{a_1})N(\mathfrak{a_2})$, for any
ideals $\mathfrak{a_1},\mathfrak{a_2}$ of $R$.
Unique factorization of $\mathfrak{a}$ into prime ideals gives the Euler product
\begin{equation}
    \label{eq:artin zeta b}
    \zeta_R(s) = \prod_{\mathfrak{p}} (1-N(\mathfrak{p})^{-s})^{-1}, \quad \Re{s}>1.
\end{equation}
We can account for each ideal $\mathfrak{p}$ of $R$ by considering the
irreducible polynomial $P\in \F_q[x]$ that sits below it.
Now, in $R$, we have $N(\PP)=q^{2\deg{P}}$ because there are $q^{\deg{P}}$ choices
for $a(x)$ and $b(x)$ modulo $P(x)$ in~\eqref{eq:R}.
Thus, the decomposition
of $\PP$ into prime ideals given in~\eqref{eq:decomposition law} yields:
$N(\mathfrak{p}) = q^{\deg{P}}$ if $\PP= \mathfrak{pp'}$ or $\mathfrak{p}^2$,
and $N(\mathfrak{p}) = q^{2\deg{P}}$ if $\PP= \mathfrak{p}$.

We can use the Legendre symbol to correctly account for each local factor:
\begin{equation}
    \label{eq:artin zeta c}
    \zeta_R(s) = \prod_{P} (1-|P|^{-s})^{-1}  \prod_{P} (1-\chi_D(P)|P|^{-s})^{-1}.
\end{equation}
Here, $P \in \F_q[x]$ runs over all monic irreducible polynomials and
$|P|=q^{\deg{P}}$. When $\chi_D(P)=1$ this accounts for the two prime ideals
$\mathfrak{p},\mathfrak{p'}$ that $P$ sits below, each of norm $q^{\deg{P}}$. When $\chi_D(P)=0$
there is just a single $\mathfrak{p}$ of norm $q^{\deg{P}}$. And when
$\chi_D(P)=-1$ the two factors involving $P$ combine to give the correct norm
$q^{2\deg{P}}$ of $\mathfrak{p}$.

Artin proved that $\zeta_R(s)$ is a rational function of $q^{-s}$.
We denote the first Euler product by $\zeta_{\F_q}(s)$. It can be expressed
in closed form by unique factorization in $\F_q[x]$:
\begin{eqnarray}
    \label{eq:zeta F_q}
    \zeta_{\F_q}(s) := \prod_{P} (1-|P|^{-s})^{-1}
    = \sum_{n \in \F_q[x],\text{monic} \atop \deg{n}\geq 0} |n|^{-s}
    = \sum_{r=0}^\infty q^r q^{-rs} = (1-q^{1-s})^{-1}.
\end{eqnarray}
The second equality follows from unique factorization, the third equality gathers $n$'s according
to their degree (there are $q^r$ monic polynomials of degree $r$ in $\F_q[x]$), and the
last equality is the sum of the stated geometric series.

Letting
\begin{equation}
    \label{eq:L defn}
    L(s,\chi_D) := \prod_{P} (1-\chi_D(P) |P|^{-s})^{-1} = \sum_{n \in \F_q[x],\text{monic} \atop \deg{n}\geq 0}
    \chi_D(n) |n|^{-s},
\end{equation}
one can collect together the terms $n$ of given degree and get
\begin{equation}
    \label{eq:L}
    L(s,\chi_D) = \sum_{r=0}^\infty q^{-rs} \sum_{n \text{monic} \atop  \deg n=r} \chi_D(n).
\end{equation}
Artin used quadratic reciprocity to show that
\begin{equation}
    \label{eq:chi sum}
    \sum_{n \text{monic} \atop  \deg n=r} \chi_D(n) =0, \qquad \text{if $r \geq d$,}
\end{equation}
so that $L(s,\chi_D)$ is a polynomial in $q^{-s}$ of degree $\leq d-1$, and in fact of
degree $d-1$ by means of the functional equation, also proved by Artin, described below.

To prove~\eqref{eq:chi sum}, one can use the fact that the sum of $(n|D)$ over a
complete set of residue classes $n$ modulo $D$ is 0. Note that on applying
quadratic reciprocity, each $\chi_D(n)= \pm (n|D)$. For fixed $D$ and fixed
$\deg n$, with $n$ monic, each application of quadratic reciprocity has,
by~\eqref{eq:quadratic reciprocity}, the same $\pm 1$ factor. And, when $r\geq d$, $n$ runs over
$q^{r-d}$ copies of a complete set of residue classes modulo $D$, which
can be seen by writing $n = g(x) D(x) + h(x)$, with $\deg{h}<d$ or $h=0$, and $\deg{g}=r-d$,
$g$ monic.

\subsection{Functional and `approximate' functional equations}
\label{sec:funct eqn}

Artin also derived the functional equation for $L(s,\chi_D)$.
It plays an important role in Andrade and Keating's heuristics leading to their moment
conjecture, and also in allowing us to reduce the complexity of
determining the zeta function associated to quadratic function fields.

In order to describe it,
we let $X_D(s)=|D|^{1/2-s}X(s)$, where
\begin{equation}
    \label{eq:X}
    X(s)= \begin{cases}
        q^{-1/2+s}, &\text{if $d$ is odd,} \\
        \frac{1-q^{-s}} {1-q^{-(1-s)}}q^{-1+2s}, &\text{if $d$ is even.} \\
    \end{cases}
\end{equation}
Then
\begin{equation}
    \label{eq:funct eqn L}
    L(s,\chi_{D})=X_{D}(s)L(1-s,\chi_{D}).
\end{equation}
The function $X(s)$ plays the same role as the ratio of Gamma factors,
$\pi^{s-1/2} \Gamma ( (1-s+\mathfrak{a})/2 ) / \Gamma ( (s+\mathfrak{a})/2 )$,
where $\mathfrak{a}=\pm 1$,
that appears in the functional equation of Dirichlet $L$-functions.

Note that in the case $d$ is even, $L(s,\chi_D)$ has a trivial zero at $s=0$.
If one defines the `completed' $L$--function, $L^*(s,\chi_D)$ by
\begin{equation}
    \label{eq:L*}
    L(s,\chi_D)=(1-q^{-s})^\lambda L^{*}(s,\chi_{D}),
    \qquad
    \lambda=\begin{cases}
        1, & \text{$d$ even},\\
        0, & \text{$d$ odd},
    \end{cases}
\end{equation}
then $L^*$ is a polynomial in $q^{-s}$ of even degree
\begin{equation}
    \label{eq:2g}
    2g := d-1-\lambda,
\end{equation}
and satisfies the functional equation
\begin{equation}
    L^{*}(s,\chi_{D})=q^{(1-2s)g}L^{*}(1-s,\chi_{D}).
\end{equation}
Because $L$ and $L^*$ are polynomials in $u=q^{-s}$, it is convenient to define
\begin{equation}
    \mathcal{L}^{*}(u,\chi_{D})=L^{*}(s,\chi_{D}),
\end{equation}
so that the above functional equation reads
\begin{equation}
    \mathcal{L}^{*}(u,\chi_{D})=(qu^{2})^g \mathcal{L}^{*}(1/qu,\chi_{D}).
\end{equation}
Notice that this gives a relationship between the coefficients of $\mathcal{L^*}$ (and hence of $L^*$):
\begin{equation}
    \label{eq:L* as poly}
    \mathcal{L}^*(u,\chi_D) =: \sum_{r=0}^{2g} b(r) u^r = q^g u^{2g} \sum_{r=0}^{2g} b(r)(qu)^{-r}.
\end{equation}
Comparing coefficients yields
\begin{equation}
    \label{eq:b(r)}
    b(2g-r)=b(r)q^{g-r},
\end{equation}
thus
\begin{equation}
    \label{eq:L* as poly b}
    \mathcal{L}^*(u,\chi_D) = \sum_{r=0}^{g} b(r) u^r + q^g u^{2g} \sum_{r=0}^{g-1} b(r) (qu)^{-r}.
\end{equation}

When $d$ is odd, so that $L=L^*$, then, returning to~\eqref{eq:L defn}, we have
\begin{equation}
    \label{eq:L b}
    L(s,\chi_D) = \sum_{0 \leq \deg{n}\leq g} \chi_D(n) |n|^{-s}
    +
    X_D(s) \sum_{0 \leq \deg{n}\leq g-1} \chi_D(n) |n|^{-(1-s)},
\end{equation}
in analogy to the approximate functional equation of Dirichlet $L$-functions, though,
here, the approximate functional equation is an identity with no correction terms.
The advantage of the approximate functional
equation is that it only involves terms with $\deg{n}\leq g$. This alone represents a large savings,
since the number of monic polynomials $n$ of degree $r$ equals $q^r$, so that the total number of
$n$ involved is $\sum_0^g q^r = (q^{g+1}-1)/(q-1)$, rather than $(q^{2g+1}-1)/(q-1)$ in~\eqref{eq:L b},
i.e. {\it roughly} $|D|^{1/2}$ many terms compared to approximately $|D|$ terms in~\eqref{eq:L}.

The approximate functional equation in the case that $d$ is even involves extra corrections terms.
We define  $\mathcal{L}(u,\chi_D)=L(s,\chi_D)$ so that, when $d$ is even,
$\mathcal{L}(u,\chi_D)=(1-u)L^*(u,\chi_D)$.
Letting
\begin{equation}
    a(r) = \sum_{n \text{monic} \atop  \deg n=r} \chi_D(n),
\end{equation}
we have
\begin{equation}
    \label{eq:L as poly even d}
    \mathcal{L}(u,\chi_D) = \sum_{r=0}^{2g+1} a(r) u^r = (1-u) \sum_{r=0}^{2g} b(r)u^r.
\end{equation}
Hence, $a(0)=b(0)$, $a(1)=b(1)-b(0)$, $a(2)=b(2)-b(0)$, \ldots, $a(2g) = b(2g)-b(2g-1)$, $a(2g+1)=-b(2g)$.
Summing, gives:
\begin{equation}
    \label{eq:b vs a}
    b(r) = \sum_{j=0}^r a(j), \qquad 0 \leq r \leq 2g.
\end{equation}
The extra factor of $(1-u)$ complicates, slightly, the approximate functional equation.
Substituting ~\eqref{eq:b vs a} into \eqref{eq:L* as poly b}, rearranging the
resulting double sum, and summing the geometric series, we obtain:
\begin{eqnarray}
    \label{eq:L* approx fnct eqn, d even}
    \mathcal{L}^*(u,\chi_D)
    &=& \sum_{r=0}^{g} u^r \sum_{j=0}^r a(j) + q^g u^{2g} \sum_{r=0}^{g-1} (qu)^{-r} \sum_{j=0}^r a(j)
    = \sum_{j=0}^{g} a(j) \sum_{r=j}^g u^r + q^g u^{2g} \sum_{j=0}^{g-1} a(j) \sum_{r=j}^{g-1} (qu)^{-r} \notag \\
    &=& \sum_{j=0}^{g} a(j) \frac{u^j-u^{g+1}}{1-u}+ q^g u^{2g} \sum_{j=0}^{g-1} a(j) \frac{(qu)^{-j}-(qu)^{-g}}{1-(qu)^{-1}}.
\end{eqnarray}
Thus, multiplying by $1-u$,
\begin{eqnarray}
    \label{eq:L approx fnct eqn, d even}
    \mathcal{L}(u,\chi_D)
    = \sum_{j=0}^{g} a(j) u^j + \frac{q^g u^{2g}(1-u)}{1-(qu)^{-1}} \sum_{j=0}^{g-1} a(j)(qu)^{-j}
    - u^{g+1}\sum_{j=0}^g a(j) - \frac{u^{g}(1-u)}{1-(qu)^{-1}} \sum_{j=0}^{g-1} a(j), \notag \\
\end{eqnarray}
so that, for $d$ even,
\begin{eqnarray}
    \label{eq:L approx fnct eqn, d even b}
    L(s,\chi_D)
    &=& \sum_{0 \leq \deg{n}\leq g} \chi_D(n) |n|^{-s} + X_D(s) \sum_{0 \leq \deg{n}\leq g-1} \chi_D(n) |n|^{-(1-s)} \notag \\
    &-& q^{-s(g+1)} \sum_{0 \leq \deg{n}\leq g} \chi_D(n) - X_D(s) q^{-(1-s)g} \sum_{0 \leq \deg{n}\leq g-1} \chi_D(n).
\end{eqnarray}
Hence in the $d$ even case, the approximate functional equation has a remainder term, expressed in the second line
above. Note that one can also express the remainder term using the coefficients $b(g)= \sum_{j=0}^g a(j)$,
and $b(g-1)= \sum_{j=0}^{g-1} a(j)$.

\subsection{Hyperelliptic curves according to Schmidt and Weil}
\label{sec:weil}

Another point of view is obtained by considering the related hyperelliptic
curve $C: y^2=D(x)$ over $\F_q$.
Schmidt defined the zeta function associated to $C$ as the function
\begin{equation}
    \label{eq:Z_c}
    Z_{C}(u):=\exp\left(\sum_{r=1}^{\infty}N_{r}(C)\frac{u^{r}}{r}\right),
\end{equation}
where
$N_{r}(C)$ counts the number of points, including points at infinity, on the curve
$C$ over the field $\F_{q^r}$. When $d$ is odd there is one point at infinity on
the curve and when $d$ is even there are two: 
\begin{equation}
    \label{eq:N_r}
    N_{r}(C) := 1+\lambda + \left|\br{ (x,y) \in \F_{q^r}\times\F_{q^r} : y^2=D(x)} \right|.
\end{equation}
We can express $N_r(C)$ in terms of the Legendre symbol on $\F_{q^r}$:
For $a \in \F_{q^r}$, let
\begin{equation}
    \label{eq:legendre F}
    \leg{a}{\F_{q^r}} =
    \begin{cases}
        1, &\text{if $a \neq 0$ and $a$ is a square in $\F_{q^r}$},\\
        -1, &\text{if $a \neq 0$ and $a$ is not a square in $\F_{q^r}$},\\
        0, &\text{if $a=0$}.
    \end{cases}
\end{equation}
Then
\begin{eqnarray}
    \label{eq:N_r b}
    N_{r}(C)
    = 1+\lambda + \sum_{x \in \F_{q^r}} \left(1+\leg{D(x)}{\F_{q^r}}\right)
    = q^r+1+\lambda + \sum_{x \in \F_{q^r}} \leg{D(x)}{\F_{q^r}}.
\end{eqnarray}
since there are two solutions in $\F_{q^r}$ to $y^2=D(x)$ when $D(x)$ is a square (and non zero), one solution
if $D(x)=0$, and none otherwise.

For given $D$, we define $a_{q^r}=a_{q^r}(D)$ to be
\begin{equation}
    \label{eq:a_q}
    a_{q^r}:=q^r+1+\lambda-N_{r}(C)= - \sum_{x \in \F_{q^r}} \leg{D(x)}{\F_{q^r}}.
\end{equation}

One can show that $Z_C$ and $\zeta_R$ are related:
\begin{equation}
    \label{eq:Z b}
    Z_{C}(u)= \frac{\zeta_R(u)}{(1-u)^{1+\lambda}},
\end{equation}
so that
\begin{equation}
    \label{eq:Z c}
    Z_{C}(u)= \frac{\mathcal{L}^*(u,\chi_D)}{(1-u)(1-qu)}.
\end{equation}
Weil proved the Riemann Hypothesis for $Z_C$: that its zeros  lie on the circle $|u| =q^{-1/2}$
(equivalently, that the zeros of $L^*(s,\chi_D)$ lie on $\Re{s}=1/2$)~\cite{W}. Thus we may write
\begin{equation}
    \label{eq:zeros}
    \mathcal{L}^*(u,\chi_D) = \prod_1^{2g} (1-\alpha_j u),
\end{equation}
with $|\alpha_j|=q^{1/2}$. Taking the logarithm of~\eqref{eq:Z_c} and~\eqref{eq:Z c}, using~\eqref{eq:zeros},
and equating coefficients of their
Maclaurin series
gives
\begin{equation}
    \label{eq:N_r c}
    N_{r}(C) = q^r + 1 - \sum_1^{2g} \alpha_j^r.
\end{equation}

In more generality, Schmidt obtained the rationality and functional equation of
the zeta function associated to any non-singular curve over $\F_q$, and Weil established its
Riemann Hypothesis.

One can express the coefficients of $L$ or $L^*$ in terms of
the $a_{q^r}$'s. Substituting~\eqref{eq:a_q} into~\eqref{eq:Z c}, we get
\begin{equation}
    \label{eq:L in terms of a_q}
    \mathcal L^*(u,\chi_D)  = (1-u)^{-\lambda} \exp\left(-\sum_{r=1}^{\infty}a_{q^r}\frac{u^{r}}{r}\right).
\end{equation}
On Taylor expanding the series on the rhs above, and also using relationship~\eqref{eq:b(r)},
we get Table~\ref{table:zeta functions} for the polynomials $\mathcal
L(u,\chi_D)= (1-u)^\lambda \mathcal L^*(u,\chi_D)$, for $d\leq 7$:
\begin{table}[H]
\begin{tabular}{c|c}
$d$ & $\mathcal L(u,\chi_D)$ \\ \hline
1 & $1$   \\
2 & $1-u$   \\
3 & $1-a_q u +q u^2$   \\
4 &  $(1-u)(1-(a_q-1)u+qu^2)$  \\
5 & $1-a_qu +\frac{1}{2}(a_q^2-a_{q^2}) u^2 -q a_q u +q^2u^4$  \\
6 & $(1-u)(1-(a_q-1)u+\frac{1}{2}(a_q^2-a_{q^2}-2a_q+2) u^2 -q(a_q-1) u^3+q^2u^4)$   \\
7 & $1-a_qu +\frac{1}{2}(a_q^2-a_{q^2}) u^2 -\frac{1}{6}(a_q^3-3a_q a_{q^2}+2a_{q^3})u^3 +
\frac{q}{2}(a_q^2-a_{q^2}) u^4 -q^2a_qu^5+q^3u^6$
\end{tabular}
\caption
{$\mathcal L(u,\chi_D)$, for $d\leq 7$.}
\label{table:zeta functions}
\end{table}

\subsection{The hyperelliptic ensemble}

We define $\mathcal H_{q,d}$ to be the set of square-free monic polynomials
of degree $d$ in $\F_q[x]$. The number of elements of $\mathcal H_{q,d}$ is given by
\begin{equation}
    \# \mathcal H_{q,d} = \begin{cases}
        q^d-q^{d-1}, &d \geq 2,\\
        q, &d=1.
    \end{cases}
    \label{eq:H_d size}
\end{equation}
This can be proven by considering the coefficient of $q^{-ds}$ for $\prod_P (1+|P|^{-s})=
\zeta_{\F_q}(s)/\zeta_{\F_q}(2s) = (1-q/q^{2s})/(1-q/q^s)$.

We will also need the following formula for the number, $i_n(q)$, of monic
irreducible polynomials in $\F_q[x]$ of degree $n\geq 1$:
\begin{equation}
    \label{eq:i_n}
    i_n(q) = \frac{1}{n} \sum_{m|n} \mu(m) q^{n/m},
\end{equation}
where $\mu$ is the traditional M\"obius function. This can be obtained by grouping together, 
in~\eqref{eq:zeta F_q} , polynomials $P$ according to their degree, so that:
$\prod_{n=1}^\infty (1-q^{-ns})^{-i_n(q)} = (1-q/q^s)^{-1}$.
Taking the logarithmic derivative with respect to $s$, expanding the geometric series on both sides,
and comparing coefficients of $q^{-ns}$,
gives
\begin{equation}
    \label{eq:i_n b}
    \sum_{m|n} m i_m(q) = q^n.
\end{equation}
M\"obius inversion then yields \eqref{eq:i_n}.

\section{Moments of zeta functions over the hyperelliptic ensemble}
\label{sec:AK}

Let $k$ be a positive integer.
Katz and Sarnak proved~\cite{KS}~\cite{KS2} that
\begin{equation}
    \label{eq:katz sarnak}
    \lim_{q \to \infty}
    \frac{1}{\# \mathcal H_{q,d}} \sum_{D(x) \in \mathcal H_{q,d}} L(1/2,\chi_D)^k
    = \int_{USp(2g)} \det(I-A)^k dA,
\end{equation}
where $2g=d-1$ or $d-2$ depending on whether $d$ is odd or even, and
$dA$ is Haar measure on $USp(2g)$ normalized so that $\int_{USp(2g)} dA=1$.
See (40) and the discussion above (41) in~\cite{KS2}. The statement of their result is
given for a general class function on $USp(2g)$, and their interest was in the
statistics of zeros of zeta functions. However, one can take, in their (40), for the class function,
a power of the characteristic polynomial.

One can give precise formulas for the integral on the rhs above.
Keating and Snaith~\cite{KeS} used the Selberg integral to derive
\begin{equation}
    \label{eq:KeS}
    \int_{USp(2g)} \det(I-A)^k dA
    = \left(\prod_{j=1}^k \frac{j!}{(2j)!}\right)
      \prod_{1\leq i\leq j\leq k} (2g+i+j).
\end{equation}
This formula has the advantage of being expressed very concisely and explicitly.

Conrey, Farmer, Keating, Rubinstein, and Snaith gave, in~\cite{CFKRS}, another
formula, as a $k$-fold contour integral:
\begin{eqnarray}
    \label{eq:k fold integral USp(2g)}
    \\
    \int_{USp(2g)} \det(I-A)^k dA
     =
    \frac{(-1)^{k(k-1)/2}2^k}{k!}\frac{1}{(2\pi i)^{k}}
    \oint\cdots \oint
    \frac{ G_{USp}(z_1,\dots,z_k) \Delta(z_1^2,\ldots,z_{k}^2)^2} {\prod_{j=1}^{k}
    z_j^{2k-1}}e^{g\sum_{j=1}^k z_j } dz_1\cdots
    dz_{k} , \notag
\end{eqnarray}
where the contours of integration enclose the origin,
\begin{equation}
    \Delta(z_1^2,\ldots,z_k^2)
    = \prod_{1\leq i < j \leq k} (z_j^2 -z_i^2)
\end{equation}
is a Vandermonde determinant, and
\begin{equation}
    \label{eq:G_USp}
    G_{USp}(z_1,\dots,z_k) =  \prod_{1\leq i\le j \leq k}
    (1-e^{-z_{i}-z_{j}})^{-1}.
\end{equation}
While much more complicated than \eqref{eq:KeS},
this form is the one for which analogous formulas for the moments
of $L(1/2,\chi_D)$ have been developed, for number fields~\cite{CFKRS}
~\cite{AR}~\cite{GHRR} and in the function field setting~\cite{AK}~\cite{A}.

\subsection{Andrade-Keating conjectures}

Andrade and Keating have given a conjecture for the asymptotic behaviour of the
moments of $L(1/2,\chi_D)$, averaged over $\mathcal H_{q,d}$. While they restricted
their discussion to the case that $d$ is odd, it is straight-forward to adapt
their analysis to include even $d$. For the reader's convenience, we repeat below
the definition of $X(s)$ given earlier in~\eqref{eq:X}.

\begin{conj}[Andrade-Keating]
\label{conj:AK}

Let $q$ be an odd prime power, and $d$ a positive integer. Define
\begin{equation}
    X(s)= \begin{cases}
        q^{-1/2+s}, &\text{if $d$ is odd,} \\
        \frac{1-q^{-s}} {1-q^{-(1-s)}}q^{-1+2s}, &\text{if $d$ is even.} \\
    \end{cases}
\end{equation}

Andrade and Keating conjectured~\cite{AK} the following asymptotic expansion. For $q$ fixed, and $d \to \infty$,
\begin{equation}
    \label{eq:AK 1}
    M_k(q;d):=
    \frac{1}{\#\mathcal H_{q,d}}\sum_{D(x)\in\mathcal{H}_{q,d}}L(1/2,\chi_D)^k \sim Q_k(q;d)
\end{equation}
where $Q_k(q;d)$ is the polynomial of degree $k(k+1)/2$ in $d$,
with coefficients that depend on $k$ and $q$, given by the $k$--fold residue
\begin{eqnarray}
    Q_k(q;d)  =  \frac{(-1)^{k(k-1)/2}2^k}{k!}
    \frac{1}{(2\pi i)^{k}}
    \oint \cdots \oint
    \frac{G(z_1, \dots,z_{k})\Delta(z_1^2,\dots,z_{k}^2)^2}
    {\prod_{j=1}^{k} z_j^{2k-1}}
    \label{eq:AK 2}  q^{\tfrac d2 \sum_{j=1}^{k}z_j}\,dz_1\dots dz_{k}, \notag \\
\end{eqnarray}
where
\begin{equation}
    G(z_1,\dots,z_k)=A(\tfrac{1}{2};z_1,\dots,z_k) 
    \prod_{j=1}^k X(\tfrac12+z_j)^{-\frac12}
    \prod_{1\le i\le j\le k}\zeta_{\F_q}(1+z_i+z_j),
\end{equation}
and $A(\tfrac{1}{2};z_1,\dots,z_k)$ is the Euler product, absolutely convergent for $|\Re(z_j)|<\frac12 $, defined by
\begin{eqnarray}
    \label{eq:A}
    A(\tfrac{1}{2};z_1,\dots,z_k) & = & \prod_{\substack{P \ \mathrm{monic} \\ \mathrm{irreducible}}} \prod_{1\le i \le j \le k}
    \left(1-\frac{1}{|P|^{1+z_i+z_j}}\right) \nonumber \\&   &\times \left(\frac
    12 \left(\prod_{j=1}^k\left( 1-\frac{1}{|P|^{\frac 12+z_j}}\right)^{-1} +
    \prod_{j=1}^k\left(1+\frac{1}{|P|^{\frac12+z_j}}\right)^{-1}
    \right)+\frac{1}{|P|} \right)
    \left( 1+ \frac{1}{|P|}\right)^{-1}. 
\end{eqnarray}
\end{conj}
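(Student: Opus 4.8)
The plan is to \emph{derive} the conjectured formula $Q_k(q;d)$ by adapting the CFKRS recipe \cite{CFKRS} to the function-field setting, as Andrade and Keating did. Since \eqref{eq:AK 1} is an asymptotic prediction rather than a theorem, what I aim to produce is a heuristic derivation that recovers \eqref{eq:AK 2}, making transparent where each ingredient --- the arithmetic factor $A$, the product of $\zeta_{\F_q}$'s, and the $X$-factors --- enters, and isolating the single step that must remain unproven. The natural object to analyze is not the single moment but the shifted moment average $\frac{1}{\#\mathcal H_{q,d}}\sum_{D\in\mathcal H_{q,d}}\prod_{j=1}^k L(\tfrac12+\alpha_j,\chi_D)$; the conjectured moment is recovered by letting all $\alpha_j\to 0$ at the end.

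First I would expand each factor via the approximate functional equation of Section~\ref{sec:funct eqn} (exact for $d$ odd), writing $L(\tfrac12+\alpha_j,\chi_D)$ as a principal Dirichlet sum $\sum_n \chi_D(n)|n|^{-(1/2+\alpha_j)}$ plus its dual $X_D(\tfrac12+\alpha_j)\sum_n \chi_D(n)|n|^{-(1/2-\alpha_j)}$. Multiplying the $k$ factors produces $2^k$ terms indexed by the subset $S\subseteq\{1,\dots,k\}$ of factors in which the shift is reflected $\alpha_j\mapsto-\alpha_j$; each term carries the data $\prod_{j\in S} X_D(\tfrac12+\alpha_j)$, whose average over the family supplies the $q^{(d/2)\sum z_j}$ dependence and the $\prod_j X(\tfrac12+z_j)^{-1/2}$ factors in $G$.

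The heart of the computation is the family average of $\chi_D(n_1\cdots n_k)$ over square-free monic $D$ of degree $d$. Using the orthogonality behind \eqref{eq:chi sum} together with quadratic reciprocity, one argues that, as $d\to\infty$, this average tends to $0$ unless the argument $m=n_1\cdots n_k$ is a perfect square, in which case it approaches $\prod_{P\mid m}(1+|P|^{-1})^{-1}$, the density of $D$ coprime to the primes dividing $m$. Retaining only these diagonal (square) contributions turns each of the $2^k$ sums into an Euler product over monic irreducibles $P$, which factors as $\prod_{1\le i\le j\le k}\zeta_{\F_q}(1+\alpha_i+\alpha_j)$ times the convergent correction $A(\tfrac12;\alpha_1,\dots,\alpha_k)$ of \eqref{eq:A}. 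The local factor at $P$ is exactly the average of $\prod_j(1-\chi_D(P)|P|^{-(1/2+\alpha_j)})^{-1}$ over the equally likely values $\chi_D(P)=\pm1$ for $P\nmid D$, plus the $\chi_D(P)=0$ contribution from $P\mid D$, which is precisely what produces the symmetric shape $\tfrac12(\prod_j(1-|P|^{-(1/2+z_j)})^{-1}+\prod_j(1+|P|^{-(1/2+z_j)})^{-1})+|P|^{-1}$ weighted by $(1+|P|^{-1})^{-1}$.

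Finally I would reassemble the $2^k$ diagonal terms: each is $\prod_{i\le j}\zeta_{\F_q}(1\pm\alpha_i\pm\alpha_j)\,A(\cdots)$ with signs flipped on $S$ and weighted by the matching $X$-factors, and summing over $S$ is exactly the combinatorial sum that the CFKRS contour-integral lemma rewrites as the $k$-fold residue \eqref{eq:AK 2}, with $G$ absorbing $A$, the $X^{-1/2}$'s, and the zeta product, and with $\Delta(z^2)^2/\prod z_j^{2k-1}$ encoding the symplectic symmetry $z_j\mapsto-z_j$; the poles of the $\zeta_{\F_q}$'s at $z_i+z_j=0$ are cancelled by the zeros of $\Delta$, leaving a finite residue that is polynomial of degree $k(k+1)/2$ in $d$. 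The step I expect to be the genuine obstacle, and the reason this is a conjecture rather than a theorem, is the family average: discarding the non-square (off-diagonal) contributions is only heuristic, and bounding them uniformly in $d$ --- including the interference between the principal and dual sums near the edge $\deg n\approx g$ --- is exactly the hard analytic input that the recipe sidesteps.
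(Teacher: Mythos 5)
This statement is a conjecture: the paper does not prove it, but simply records it (adapted to include even $d$) with a citation to Andrade and Keating \cite{AK}, supporting it only indirectly via consistency with the Katz--Sarnak limit, the proven $k=1$ case \eqref{eq:AK 1st moment}, and the numerical data of Section~\ref{sec:data}. Your heuristic derivation is a faithful reconstruction of the CFKRS recipe underlying \cite{AK} --- the approximate functional equation, the $2^k$ swaps, the diagonal (square-term) average producing $\prod_{1\le i\le j\le k}\zeta_{\F_q}(1+z_i+z_j)\,A(\tfrac12;z_1,\dots,z_k)$, and Lemma 2.5.2 of \cite{CFKRS} repackaging the sign sum as the $k$-fold residue --- and you correctly identify the discarded off-diagonal contributions as the step that keeps this a conjecture rather than a theorem; so your account matches the provenance of the statement, though there is no proof in the paper to compare it against.
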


\noindent{Remarks:}
1) The above conjecture is the function field analogue of conjecture {1.5.3} in \cite{CFKRS}
for the moments of quadratic Dirichlet $L$-functions in the number field setting.

2) If we substitute $u_j=\log(q) z_j$, then for $d=2g+1$ or $d=2g+2$,
\begin{eqnarray}
    \label{eq:Q cleaner}
    Q_k(q;d)=
    \frac{(-1)^{k(k-1)/2}2^k}{k!}
    \frac{1}{(2\pi i)^{k}}
    \oint \cdots \oint 
    \frac{H(u_1, \dots,u_{k})\Delta(u_1^2,\dots,u_{k}^2)^2}
    {\prod_{j=1}^{k} u_j^{2k-1}}
    e^{\frac{2g}{2} \sum_{j=1}^{k}u_j}\,du_1\dots du_{k}, \notag \\
\end{eqnarray}
where
\begin{eqnarray}
    \notag
    &&H(u_1,\dots,u_k)=
    \prod_{1\leq i\le j \leq k}
    (1-e^{-u_{i}-u_{j}})^{-1}
    \prod_{n=1}^\infty
    \Bigg(
        \prod_{1\le i \le j \le k}
        \left(1-\frac{1}{q^n e^{n(u_i+u_j)}}\right)
    \bigg(\frac12 \bigg(\prod_{j=1}^k\left( 1-\frac{1}{q^{\frac{n}{2}} e^{nu_j}}\right)^{-1}\notag \\
    &&+\prod_{j=1}^k\left(1+\frac{1}{q^{\frac{n}{2}} e^{n u_j}}\right)^{-1}
    \bigg)+\frac{1}{q^n} \bigg)
    \left( 1+ \frac{1}{q^n}\right)^{-1}
    \Bigg)^{i_n(q)}
    \times
    \label{eq:H}
    \begin{cases}
        1 &\text{, if $d=2g+1$}, \\
        \prod_{j=1}^k \left(\frac{1-q^{-1/2} e^{u_j}}{1-q^{-1/2}e^{-u_j}}\right)^{1/2} &\text{, if $d=2g+2$}.
    \end{cases} \notag \\
\end{eqnarray}

Letting  $q\to\infty$, we have that, $Q_k(q;d)$ tends to
$\int_{USp(2g)} \det(I-A)^k dA$ as expressed on the rhs of \eqref{eq:k fold integral USp(2g)},
consistent with the theorem of Katz and Sarnak.

3) When $d$ is odd, $k=1$, and $q\equiv 1 \mod 4$,  Andrade and Keating~\cite{AK2} proved
\begin{eqnarray}
    \frac{1}{\# \mathcal H_{q,d}}\sum_{D\in\mathcal{H}_{q,d}}L(1/2,\chi_{D}) = 
    \frac{1}{2}P(1)\left(d+1+4\sum_{\substack{P \ \mathrm{monic} \\ \mathrm{irreducible}}}\frac{\mathrm{deg}(P)}{|P|(|P|+1)-1}\right)
    +O(|D|^{-1/4+\log_q(2)/2}), \notag \\
    \label{eq:AK 1st moment}
\end{eqnarray}
where
\begin{equation}
   \label{eq:P}
    P(1)=\prod_{\substack{P \ \mathrm{monic} \\ \mathrm{irreducible}}}\left(1-\frac{1}{(|P|+1)|P|}\right).
\end{equation}
This is consistent with the conjecture since, when $d$ is odd,
\begin{equation}
   Q_{1}(q;d)=\frac{1}{2}P(1)\left(d+1+4\sum_{\substack{P \ \mathrm{monic} \\
   \mathrm{irreducible}}}\frac{\mathrm{deg}(P)}{|P|(|P|+1)-1}\right).
\end{equation}
The above formula is analogous to the formula obtained by Jutila \cite{J} for the first moment,
in the number field setting, of $L(1/2,\chi_d)$.

We would like to point out that Hoffstein and Rosen~\cite{HR}, have obtained
formulas for the first moment, as $q \to \infty$, averaging over {\it all} $D(x) \in \F_q[x]$,
and also for square-free $D(x)$, not necessarily monic. In the latter case, they did not
explicitly determine a certain coefficient in their formula. In principle, their method
should produce a sharper remainder term than~\eqref{eq:AK 1st moment}. The second to fourth moments,
as $q \to \infty$, again averaged over {\it all} $D(x) \in \F_q[x]$,
have been considered, by Chinta-Gunnels~\cite{CG}~\cite{CG2} and Bucur-Diaconu~\cite{BD}.
Square-free averages seem harder to get a handle on, and, for the purpose of testing
Andrade and Keating's conjecture we require square-free averages.

\section{Numerical data}
\label{sec:data}

We first present numerical evidence in support of the Andrade-Keating conjecture. We have numerically
computed the moments $M_k(q,d)$, and compared them to Andrade and Keating's $Q_k(q,d)$ for $k\leq 10$,
$d\leq 18$, and for odd prime powers $q$ specified below:\\

\centerline{
\begin{tabular}{c||c|c|c|c|c|c|c|c|c|c|c}
$d$ & 3 & 4 & 5 & 6 & 7 & 8 & 9 & 10 & 11 & 12--13 & 14--18  \\ \hline
$q$ & $\leq 1009$ & $\leq 499$ & $\leq 107$ & $\leq 53$ & $\leq 25$ & $\leq 17$ & $\leq 9$ & $\leq 9$ & $\leq 7$ & $\leq 5$ & 3 
\end{tabular}
}
\vspace{.1in}
In addition to these values, we also computed moments for a few large values of $q$, when $d=3$,
such as $q=10009$. Later, we discovered formulas for the moments when $d=3$, and $d=4$,
so that one can directly evaluate the moments in those cases quite easily using
Theorems~\ref{theorem:d=3} and~\ref{theorem:d=4}. Our data will be made available on
{\tt lmfdb.org} \cite{LMFDB}.

We display a selection of data, in Tables~\ref{table:first} to \ref{table:last}.
for the pairs $q,d$: $10009,3$; $729,3$; $491,4$; $343,4$; $81,5$; $73,5$;
$49,6$; $23,7$; $17,8$; $9,9$; $9,10$; $5,11$; $5,12$; $5,13$; $3,14$; $3,15$;
$3,16$; $3,17$; $3,18$.

For $k\leq 10$, and the above pairs of $q,d$, we list the difference and ratio between the actual moments
$M_k(q,d)$, and the Andrade-Keating value $Q_k(q,d)$. The conjectured value
$Q_k(q,d)$ nicely fits the actual data $M_k(q,d)$, spectacularly
well in some cases.

The sheer number, $q^d-q^{d-1}$, of polynomials $D \in \mathcal H_{q,d}$
makes it prohibitive to compute the moments $M_k(q,d)$ for $d$ large, at least
if we do so one $D$ at a time. One can slightly reduce the amount of
computation for the moments by taking advantage of the fact that many $D$ have
have the same zeta functions. See Section~\ref{sec:iso}. The largest value
of $d$ for which we determined moments was $d=18$, and $q=3$.

Our data supports Andrade and Keating's conjecture in the sense that, for given $q$ (size of field),
and $k$, the ratio between the actual moment $M_k(q,d)$ and their prediction
$Q_k(q,d)$ does appear to tend to 1 as $d$ grows.

It seems quite difficult to determine, theoretically,
the rate at which it approaches 1 as $d \to \infty$. However, while Andrade and Keating made their
prediction for given $k$ and $q$, and $d \to \infty$, we have had some success in
determining the size of the remainder term for given $k$ and $d$, letting $q$ grow.
We describe our findings below.

A natural quantity with which to measure the remainder term in the Andrade-Keating prediction is
\begin{equation}
    X=q^d.
\end{equation}
It is roughly the number of terms, $\mathcal H_{q,d}=q^d-q^{d-1}$, being summed in the
moment $M_k(q,d)$.

For any given value of $d$ and $k$, our data suggests
that, as $X \to \infty$ (i.e. as $q \to \infty$ since, now, $d$ is fixed),
there is a constant $\mu (=\mu(k,d))$, depending
on $d$ and $k$, such that:
\begin{equation}
    \label{eq:comparison}
    M_k(q,d)/Q_k(q,d) = 1 +\Theta(X^{-\mu}),
\end{equation}
with the implied constants in the $\Theta$ depending on $k$ and $d$.
As
remarked earlier, $Q_k(q,d)$ converges, as $q \to \infty$, to~\eqref{eq:k fold integral USp(2g)}.
Thus, for given $k$ and $d$, $Q_k(q,d)$ is bounded as $q \to \infty$, hence, the above
can be written
\begin{equation}
    \label{eq:comparison b}
    M_k(q,d)-Q_k(q,d) = \Theta(X^{-\mu}).
\end{equation}

In Sections~\ref{sec:d=3}-~\ref{sec:series} we are able to determine (conditionally, for $d>4$)
the values of $\mu$ displayed in Table~\ref{table:mu}, for a selection of $d\leq 9$ and $k=1,2,3$.

\begin{table}[H]
\centerline{
\begin{tabular}{c||c|l||c|l||c|l}
$d$ & $k$ & $\mu$ & $k$ & $\mu$ & $k$ & $\mu$ \\ \hline
1 & 1 & 1    & 2 & 1 & 3 & 1\\ \hline
2 & 1 & 1    & 2 & $3/2=1.5$ &3 & 1\\ \hline
3 & 1 & 1    & 2& $4/3=1.33\ldots$ & $3$ & $4/3=1.33\ldots$\\ \hline
4 & 1 & $7/8=.875$    & 2 &$5/4=1.25$& 3 & $7/8=.875$\\ \hline
5 & 1 & $4/5 =.8$    & 2 & 1 & 3 & $3/5=.6$\\ \hline
6 & 1 & $3/4=.75$    &2 &$5/6=.833\ldots$& 3 & not determined\\ \hline
7 & 1 & $6/7=.857\ldots$   & 2 & $6/7=.857\ldots$ & 3 & $5/7=.714\ldots$\\ \hline
8 & 1 & $11/16=.6875$   &&&&\\ \hline
9 & 1 & $7/9=.77\ldots$   &&&&\\ \hline
\end{tabular}
}
\caption{Values of $\mu$, giving the size of the remainder term
$\Theta(X^{-\mu})$, in the Andrade-Keating conjecture, for $k=1,2,3$ and the first few values of $d$.}
\label{table:mu}
\end{table}

Interestingly, when $d=3$, the $k=2,3$ predictions fit better ($\mu=4/3$ in both cases)
than the $k=1$ prediction ($\mu=1$), with a similar feature for $d=5$, and $k=2$ ($\mu=1$) in comparison
to $k=1$ ($\mu=4/5$).

The $d=6$ entry for $k=3$ is missing because we did not have enough data to determine it. The formulas for
even values of $d$ seem to involve powers of $1/q^{1/2}$, as compared to $1/q$ for
odd values of $d$, and hence more terms.

One might ask about the behaviour of $\mu$ if we fix $k$ and allow $d$ to grow.
For example, if we fix $k=1$ and let $d$ grow, is it true that
$\mu \to 3/4$. This would be in analogy with the conjectured remainder term in the first moment ($k=1$)
of quadratic Dirichlet $L$-functions~\cite{AR}.
Is there a term of size $X^{-1/4}$ that eventually (for $d$ sufficiently large) enters when $k=3$,
as predicted in the number field setting by Diaconu, Goldfeld, and Hoffstein~\cite{DGH}~\cite{AR}?

If we fix $d$ and allow $k$ to grow, it appears that $\mu$ is not as impressive.
For example, we show in Section~\ref{sec:d=3}, for $d=3$ and any $k\geq 10$,
that $\mu = 1/6$ (we restrict in that section to $q$ prime).
In Section~\ref{sec:d=4} we prove, for $d=4$ and any $k\geq 9$, that $\mu=1/8$
(again with $q$ restricted to being prime).

\footnotesize
\begin{table}[H]
\centerline{
\begin{tabular}{c|c|c|c|c}
$k$ & $M_k(10009,3)$ & $Q_k(10009,3)$ & difference & ratio \\
\hline
1 & $                     2$ & $2.00000000000199401202$ & $-1.99401e-12$ & $0.9999999999990029939901$ \\
2 & $4.999999990017975729127$ & $4.999999990017976230662$ & $-5.01535e-16$ & $0.9999999999999998996931$ \\
3 & $13.99999994010785437476$ & $13.9999999401078685067$ & $-1.41319e-14$ & $0.9999999999999989905756$ \\
4 & $41.99999973048434738158$ & $41.99999973048431072166$ & $3.66599e-14$ & $1.000000000000000872855$ \\
5 & $131.9999989019673571555$ & $131.9999989019673481792$ & $8.97627e-15$ & $1.000000000000000068002$ \\
6 & $428.9999957176467628805$ & $ 428.99999571764672006$ & $4.28205e-14$ & $1.000000000000000099815$ \\
7 & $1429.999983649095186217$ & $1429.999983649095000872$ & $1.85345e-13$ & $1.000000000000000129612$ \\
8 & $4861.999938229538381732$ & $4861.999938229537621148$ & $7.60584e-13$ & $1.000000000000000156434$ \\
9 & $16795.99976785031236985$ & $16795.99976785030932926$ & $3.04059e-12$ & $1.000000000000000181031$ \\
10 & $58785.99694768653745618$ & $58785.99912943382729291$ & $-0.00218175$ & $0.9999999628866171852757$ \\
\hline
\end{tabular}
}
\caption
{$M_k(10009,3)$=$1002602250648^{-1} \sum_{D(x)\in\mathcal{H}_{10009,3}}L(1/2,\chi_D)^k$ vs $Q_k(10009,3)$.}
\label{table:first}
\end{table}

%\vspace{.2in}

\begin{table}[H]
\centerline{
\begin{tabular}{c|c|c|c|c}
$k$ & $M_k(729,3)$ & $Q_k(729,3)$ & difference & ratio \\ \hline
1 & $                     2$ & $2.000000005141182844814$ & $-5.14118e-09$ & $0.9999999974294085842009$ \\
2 & $4.999998118323576841079$ & $4.999998118342878449719$ & $-1.93016e-11$ & $0.9999999999961396768193$ \\
3 & $13.99998870994146104648$ & $13.99998871043573721017$ & $-4.94276e-10$ & $0.9999999999646945312655$ \\
4 & $41.99994919215539991743$ & $41.99994919090068601026$ & $1.25471e-09$ & $1.000000000029874176787$ \\
5 & $131.9997929897817046016$ & $131.9997929893691064232$ & $4.12598e-10$ & $1.000000000003125748678$ \\
6 & $428.9991925930345626555$ & $428.9991925915335626766$ & $1.50100e-09$ & $1.000000000003498841035$ \\
7 & $1429.996916910558118702$ & $1429.996916903926491314$ & $6.63163e-09$ & $1.000000000004637511668$ \\
8 & $4861.988351797874997796$ & $4861.98835177088281103$ & $2.69922e-08$ & $1.00000000000555167656$ \\
9 & $16795.95621972101045984$ & $16795.95621961290008345$ & $1.08110e-07$ & $1.000000000006436690771$ \\
10 & $58785.81724292694956271$ & $58785.83581101586665318$ & $-0.0185681$ & $0.9999996841400881534971$ \\
\hline
\end{tabular}
}
\caption
{$M_k(729,3)$=$386889048^{-1} \sum_{D(x)\in\mathcal{H}_{729,3}}L(1/2,\chi_D)^k$ vs $Q_k(729,3)$.}
\label{table:second}
\end{table}

%\vspace{.2in}
\begin{table}[H]
\centerline{
\begin{tabular}{c|c|c|c|c}
$k$ & $M_k(491,4)$ & $Q_k(491,4)$ & difference & ratio \\ \hline
1 & $1.952833793133705729622$ & $1.95283379342706162633$ & $-2.93356e-10$ & $0.9999999998497793833277$ \\
2 & $4.72347699885310851273$ & $4.723476998737103048879$ & $1.16005e-10$ & $1.00000000002455933709$ \\
3 & $12.73886907319525470025$ & $12.7388690698370324848$ & $3.35822e-09$ & $1.000000000263620121774$ \\
4 & $36.7169899417769629311$ & $36.71698994054792792551$ & $1.22904e-09$ & $1.00000000003347319613$ \\
5 & $110.6950691954947392148$ & $110.6950691966720004329$ & $-1.17726e-09$ & $0.9999999999893648269375$ \\
6 & $344.7459728846995577523$ & $344.7459728837781730119$ & $9.21385e-10$ & $1.000000000002672648306$ \\
7 & $1100.405995216241690213$ & $1100.405995213205079234$ & $3.03661e-09$ & $1.000000000002759536928$ \\
8 & $3580.803938022174127785$ & $3580.80393800301250494$ & $1.91616e-08$ & $1.000000000005351206929$ \\
9 & $11834.53044485529539674$ & $11834.52941628875665659$ & $0.00102857$ & $1.000000086912331074563$ \\
10 & $39615.88015863915407142$ & $39615.83875603152753383$ & $0.0414026$ & $1.000001045102386485103$ \\
\hline
\end{tabular}
}
\caption
{$M_k(491,4)$=$58001677790^{-1} \sum_{D(x)\in\mathcal{H}_{491,4}}L(1/2,\chi_D)^k$ vs $Q_k(491,4)$.}
\end{table}

%\vspace{.2in}
\begin{table}[H]
\centerline{
\begin{tabular}{c|c|c|c|c}
$k$ & $M_k(343,4)$ & $Q_k(343,4)$ & difference & ratio \\ \hline
1 & $1.943089189220997181719$ & $  1.943089190188867075$ & $-9.67870e-10$ & $0.9999999995018911647659$ \\
2 & $4.667904524799604996632$ & $4.667904524297283786135$ & $5.02321e-10$ & $1.000000000107611714825$ \\
3 & $12.49238185342463481838$ & $12.49238184309220107738$ & $1.03324e-08$ & $1.000000000827098776741$ \\
4 & $35.71296913719100356228$ & $35.71296913156088528584$ & $5.63012e-09$ & $1.000000000157649123367$ \\
5 & $106.7587272293154491271$ & $106.758727235013386794$ & $-5.69794e-09$ & $0.9999999999466278981169$ \\
6 & $329.6145322715815812529$ & $329.6145322661610210221$ & $5.42056e-09$ & $1.00000000001644514941$ \\
7 & $1042.878983141951628492$ & $1042.878983130103334014$ & $1.18483e-08$ & $1.000000000011361140333$ \\
8 & $3363.515181241613078536$ & $3363.515181166675890101$ & $7.49372e-08$ & $1.000000000022279426255$ \\
9 & $11017.02775430122683174$ & $11017.02965791539142027$ & $-0.00190361$ & $0.9999998272116692341986$ \\
10 & $36547.55945032561986556$ & $36547.62883036162722131$ & $-0.06938$ & $0.9999981016542460418421$ \\
\hline
\end{tabular}
}
\caption
{$M_k(343,4)$=$13800933594^{-1} \sum_{D(x)\in\mathcal{H}_{343,4}}L(1/2,\chi_D)^k$ vs $Q_k(343,4)$.}
\end{table}

%\vspace{.2in}
\begin{table}[H]
\centerline{
\begin{tabular}{c|c|c|c|c}
$k$ & $M_k(81,5)$ & $Q_k(81,5)$ & difference & ratio \\ \hline
1 & $2.987806713547357068149$ & $2.987806692825562058199$ & $2.07218e-08$ & $1.00000000693545370914$ \\
2 & $13.86573074840367797091$ & $13.86573073409551151745$ & $1.43082e-08$ & $1.000000001031908575743$ \\
3 & $82.64367981408428192661$ & $82.64368117790658728224$ & $-1.36382e-06$ & $0.9999999834975610244207$ \\
4 & $580.146307177966733273$ & $580.1463667277005413773$ & $-5.95497e-05$ & $0.9999998973539485492377$ \\
5 & $4573.824668082202791908$ & $4573.826022502549800431$ & $-0.00135442$ & $0.9999997038758491588937$ \\
6 & $39335.1550736043829847$ & $39335.17940837786345422$ & $-0.0243348$ & $0.9999993813483541583506$ \\
7 & $361979.8712634998365703$ & $361980.2776302882857858$ & $-0.406367$ & $0.9999988773786486117298$ \\
8 & $3516936.691114034122135$ & $3516935.189217477924701$ & $1.5019$ & $1.000000427046981360952$ \\
9 & $35726613.38116429736676$ & $35726128.68407336596104$ & $484.697$ & $1.000013567019679562501$ \\
10 & $376702516.8245619561432$ & $376679451.0864266274913$ & $23065.7$ & $1.000061234394572897393$ \\
\hline
\end{tabular}
}
\caption
{$M_k(81,5)$=$3443737680^{-1} \sum_{D(x)\in\mathcal{H}_{81,5}}L(1/2,\chi_D)^k$ vs $Q_k(81,5)$.}
\end{table}

%\vspace{.2in}
\begin{table}[H]
\centerline{
\begin{tabular}{c|c|c|c|c}
$k$ & $M_k(73,5)$ & $Q_k(73,5)$ & difference & ratio \\ \hline
1 & $2.986488987117195040355$ & $2.986488956110408111587$ & $3.10068e-08$ & $1.000000010382354458511$ \\
2 & $13.85120391739408683316$ & $13.85120389332632073605$ & $2.40678e-08$ & $1.000000001737593806464$ \\
3 & $82.4967741946123427344$ & $82.49677598164466815201$ & $-1.78703e-06$ & $0.9999999783381555927079$ \\
4 & $578.6447454493516547044$ & $578.6448254181129270916$ & $-7.99688e-05$ & $0.9999998617999198133259$ \\
5 & $4558.084908449866951901$ & $4558.086742384675503866$ & $-0.00183393$ & $0.9999995976524993483348$ \\
6 & $39165.71395519698225425$ & $39165.74675342505698226$ & $-0.0327982$ & $0.9999991625787634992611$ \\
7 & $360109.386585466970311$ & $360109.9246416242541557$ & $-0.538056$ & $0.9999985058557943958092$ \\
8 & $3495803.870606360195483$ & $3495808.763850148092748$ & $-4.89324$ & $0.9999986002541562061777$ \\
9 & $35482616.7531615019917$ & $35482487.21304819659354$ & $129.54$ & $1.000003650818290375134$ \\
10 & $373825112.8977981121039$ & $373816499.0489997443828$ & $8613.85$ & $1.000023042987188317283$ \\
\hline
\end{tabular}
}
\caption
{$M_k(73,5)$=$2044673352^{-1} \sum_{D(x)\in\mathcal{H}_{73,5}}L(1/2,\chi_D)^k$ vs $Q_k(73,5)$.}
\end{table}

%\vspace{.2in}
\begin{table}[H]
\centerline{
\begin{tabular}{c|c|c|c|c}
$k$ & $M_k(49,6)$ & $Q_k(49,6)$ & difference & ratio \\ \hline
1 & $2.816676047960577246894$ & $2.816676013338886305786$ & $3.46217e-08$ & $1.000000012291683806426$ \\
2 & $11.94445177181344907967$ & $11.94445177333470101717$ & $-1.52125e-09$ & $0.9999999998726394508203$ \\
3 & $63.85807086800793596929$ & $63.85808011755308483439$ & $-9.24955e-06$ & $0.9999998551546627797441$ \\
4 & $397.3481793964877073688$ & $397.3484554832939249546$ & $-0.000276087$ & $0.9999993051770998284564$ \\
5 & $2754.623288588277155958$ & $2754.628161624466408897$ & $-0.00487304$ & $0.9999982309640708896263$ \\
6 & $20714.1727032707348348$ & $20714.24331890170583675$ & $-0.0706156$ & $0.9999965909625621436374$ \\
7 & $165996.9411444855213461$ & $165997.8721434242824435$ & $-0.930999$ & $0.9999943915007660055879$ \\
8 & $1400184.057794141070937$ & $1400195.334950112458361$ & $-11.2772$ & $0.9999919460123242095944$ \\
9 & $12319825.39035079187353$ & $12319948.18218488950777$ & $-122.792$ & $0.9999900330884284727859$ \\
10 & $112306968.1406010439838$ & $112308155.0209042696054$ & $-1186.88$ & $0.9999894319312519673762$ \\
\hline
\end{tabular}
}
\caption
{$M_k(49,6)$=$13558811952^{-1} \sum_{D(x)\in\mathcal{H}_{49,6}}L(1/2,\chi_D)^k$ vs $Q_k(49,6)$.}
\end{table}

%\begin{table}[H]
%\begin{tabular}{c|c|c|c|c}
%$k$ & $M_k(47,6)$ & $Q_k(47,6)$ & difference & ratio \\ 
%\hline \\ 
%1 & $2.811960025460505295173$ & $2.811959984065474346354$ & $4.13950e-08$ & $1.000000014721059753123$ \\
%2 & $11.8949524326289884053$ & $11.89495243414480229848$ & $-1.51581e-09$ & $0.9999999998725666284441$ \\
%3 & $63.40673863135857718157$ & $63.40674904795223209478$ & $-1.04166e-05$ & $0.9999998357179036724386$ \\
%4 & $393.2686316098371903653$ & $393.2689425893260819871$ & $-0.000310979$ & $0.9999992092447299700339$ \\
%5 & $2717.056526186772288201$ & $2717.061997829269324965$ & $-0.00547164$ & $0.9999979861915181147137$ \\
%6 & $20359.63215449174153998$ & $20359.71105006122763835$ & $-0.0788956$ & $0.99999612491703383673$ \\
%7 & $162567.4119902472866194$ & $162568.4535946757121121$ & $-1.0416$ & $0.9999935928256350246394$ \\
%8 & $1366238.161606146536463$ & $1366251.159268946485035$ & $-12.9977$ & $0.9999904866226741916477$ \\
%9 & $11976730.02341497552581$ & $11976882.84799431384942$ & $-152.825$ & $0.9999872400372218789727$ \\
%10 & $108773521.9841440900331$ & $108775099.1139596846227$ & $-1577.13$ & $0.9999855010031850921276$ \\
%\hline
%\end{tabular}
%\caption{Comparison of $M_k(47,6)$=$10549870322^{-1} \sum_{D(x)\in\mathcal{H}_{47,6}}L(1/2,\chi_D)^k$
%    to Andrade and Keating's conjectured asymptotic formula $Q_k(47,6)$.}
%\end{table}

%\vspace{.2in}
\begin{table}[H]
\centerline{
\begin{tabular}{c|c|c|c|c}
$k$ & $M_k(23,7)$ & $Q_k(23,7)$ & difference & ratio \\ 
\hline
1 & $3.916667261680037602233$ & $3.916667215072046984931$ & $4.66080e-08$ & $1.000000011899910831828$ \\
2 & $28.36895318290689557286$ & $28.36895361224982933044$ & $-4.29343e-07$ & $0.9999999848657465613331$ \\
3 & $296.8271210147472574343$ & $296.8271319379614607806$ & $-1.09232e-05$ & $0.9999999632000817040224$ \\
4 & $3978.400255691440915331$ & $3978.400675986332260201$ & $-0.000420295$ & $0.9999998943558164259889$ \\
5 & $63802.68692372865989707$ & $63802.67647434303583351$ & $0.0104494$ & $1.000000163776603137732$ \\
6 & $1173290.508072928492608$ & $1173288.388689279659737$ & $2.11938$ & $1.000001806362075397768$ \\
7 & $24046416.78084689795807$ & $24046272.80809433006013$ & $143.973$ & $1.000005987320933971713$ \\
8 & $538361067.7094472855076$ & $538352287.1146935750589$ & $8780.59$ & $1.000016310128077601687$ \\
9 & $12974750743.4272898467$ & $12974141403.6447755601$ & $609340$ & $1.00004696571153009841$ \\
10 & $332891976281.3758666031$ & $332847688903.5173317907$ & $4.42874e+07$ & $1.000133055987272822582$ \\
\hline
\end{tabular}
}
\caption
{$M_k(23,7)$=$3256789558^{-1} \sum_{D(x)\in\mathcal{H}_{23,7}}L(1/2,\chi_D)^k$ vs $Q_k(23,7)$.}
\end{table}

%\vspace{.2in}
\begin{table}[H]
\centerline{
\begin{tabular}{c|c|c|c|c}
$k$ & $M_k(17,8)$ & $Q_k(17,8)$ & difference & ratio \\ 
\hline
1 & $3.586540611827683173548$ & $3.586540636892566051014$ & $-2.50649e-08$ & $0.9999999930114041871885$ \\
2 & $ 22.548947403531964213$ & $22.54894776512864642973$ & $-3.61597e-07$ & $0.9999999839639221313939$ \\
3 & $197.6802683820100941163$ & $197.6802898650704364376$ & $-2.14831e-05$ & $0.9999998913242167087836$ \\
4 & $2166.015292026007802413$ & $2166.014628189217440864$ & $0.000663837$ & $1.000000306478442814818$ \\
5 & $27892.99630055103627191$ & $27892.89108878033170407$ & $0.105212$ & $1.000003771992310502669$ \\
6 & $406297.4340536546537236$ & $406291.5092110336903502$ & $5.92484$ & $1.000014582737976652927$ \\
7 & $6525359.938112686293172$ & $6525112.516320263663534$ & $247.422$ & $1.000037918394786877841$ \\
8 & $113486818.2305410890984$ & $113477804.8603318981476$ & $9013.37$ & $1.000079428485775562689$ \\
9 & $2109141498.958278796091$ & $2108834825.379838974051$ & $306674$ & $1.000145423233128078027$ \\
10 & $41466902858.6631825799$ & $41456762858.08206808141$ & $1.01400e+07$ & $1.000244592193940142331$ \\
\hline
\end{tabular}
}
\caption
{$M_k(17,8)$=$6565418768^{-1} \sum_{D(x)\in\mathcal{H}_{17,8}}L(1/2,\chi_D)^k$ vs $Q_k(17,8)$.}
\end{table}

%\vspace{.2in}
\begin{table}[H]
\centerline{
\begin{tabular}{c|c|c|c|c}
$k$ & $M_k(9,9)$ & $Q_k(9,9)$ & difference & ratio \\ 
\hline
1 & $4.699049316413412507979$ & $4.699049891407480099095$ & $-5.74994e-07$ & $0.9999998776361007269725$ \\
2 & $46.24725707056031614576$ & $46.24726745110153152897$ & $-1.03805e-05$ & $0.9999997755426041904241$ \\
3 & $706.9332948602088630742$ & $706.9332532286168577971$ & $4.16316e-05$ & $1.000000058890414073949$ \\
4 & $14388.19341678737191699$ & $14388.17906849176482149$ & $0.0143483$ & $1.000000997228039684076$ \\
5 & $356658.7872479684052459$ & $356657.018621382592894$ & $1.76863$ & $1.000004958900269644989$ \\
6 & $10183031.33432607207208$ & $10182911.73737028773408$ & $119.597$ & $1.00001174486815450132$ \\
7 & $322685130.7849396712488$ & $322680691.2234089091978$ & $4439.56$ & $1.000013758373684926526$ \\
8 & $11060883575.07667143044$ & $11060965709.40727992865$ & $-82134.3$ & $0.9999925743978630506471$ \\
9 & $402640355635.9474171249$ & $402672245068.807641106$ & $-3.18894e+07$ & $0.9999208054857250596119$ \\
10 & $15357415165127.97732483$ & $15360969485690.78586929$ & $-3.55432e+09$ & $0.9997686135262413285134$ \\
\hline
\end{tabular}
}
\caption
{$M_k(9,9)$=$344373768^{-1} \sum_{D(x)\in\mathcal{H}_{9,9}}L(1/2,\chi_D)^k$ vs $Q_k(9,9)$.}
\end{table}

%\vspace{.2in}
\begin{table}[H]
\centerline{
\begin{tabular}{c|c|c|c|c}
$k$ & $M_k(9,10)$ & $Q_k(9,10)$ & difference & ratio \\ 
\hline
1 & $4.249549776125279466818$ & $4.249550011750719262062$ & $-2.35625e-07$ & $0.9999999445528493267051$ \\
2 & $35.47122535458782164262$ & $35.47122542617537260736$ & $-7.15876e-08$ & $0.9999999979818134246947$ \\
3 & $442.286953846524408696$ & $442.2870463596704975278$ & $-9.25131e-05$ & $0.9999997908300800344951$ \\
4 & $7174.125718182284449517$ & $7174.134434494580275984$ & $-0.00871631$ & $0.9999987850363865615889$ \\
5 & $139775.8683089006307473$ & $139776.6034509980038277$ & $-0.735142$ & $0.9999947405926369444087$ \\
6 & $3110983.61263697065039$ & $3111036.983659021477957$ & $-53.371$ & $0.9999828446198707494429$ \\
7 & $76480294.89533696000008$ & $76483336.31197182891681$ & $-3041.42$ & $0.9999602342577935783894$ \\
8 & $2028259368.757841547712$ & $2028400599.367150037059$ & $-141231$ & $0.9999303734137366393356$ \\
9 & $57039223496.5499399637$ & $57044641791.88289192065$ & $-5.41830e+06$ & $0.999905016577144622413$ \\
10 & $1679490328130.420640044$ & $1679652778279.477297485$ & $-1.62450e+08$ & $0.9999032834933758985671$ \\
\hline
\end{tabular}
}
\caption
{$M_k(9,10)$=$3099363912^{-1} \sum_{D(x)\in\mathcal{H}_{9,10}}L(1/2,\chi_D)^k$ vs $Q_k(9,10)$.}
\end{table}

%\vspace{.2in}
\begin{table}[H]
\centerline{
\begin{tabular}{c|c|c|c|c}
$k$ & $M_k(5,11)$ & $Q_k(5,11)$ & difference & ratio \\ 
\hline
1 & $         5.32482940928$ & $5.324828316051856638322$ & $1.09323e-06$ & $1.000000205307679135139$ \\
2 & $     64.88099399827456$ & $64.88091655935417199203$ & $7.74389e-05$ & $1.000001193554661287276$ \\
3 & $ 1274.6768000899874816$ & $ 1274.6704998246032173$ & $0.00630027$ & $1.000004942661954702197$ \\
4 & $33521.58695492143399567$ & $33521.27305651990807685$ & $0.313898$ & $1.000009364155144008333$ \\
5 & $1062440.450217281671513$ & $1062426.889916814194921$ & $13.5603$ & $1.000012763513984984261$ \\
6 & $38147507.21495457787241$ & $38147338.69609051874127$ & $168.519$ & $1.000004417578521051629$ \\
7 & $1494075723.893608277159$ & $1494132326.153963719466$ & $-56602.3$ & $0.9999621169695851894083$ \\
8 & $62322834399.64654047306$ & $ 62331619932.572067288$ & $-8.78553e+06$ & $0.9998590517471705265441$ \\
9 & $2726087327379.298589965$ & $2726989575639.266669371$ & $-9.02248e+08$ & $0.9996691412875105793323$ \\
10 & $123744101491973.6125044$ & $123822245466828.0429362$ & $-7.81440e+10$ & $0.9993689019726639896186$ \\
\hline
\end{tabular}
}
\caption
{$M_k(5,11)$=$39062500^{-1} \sum_{D(x)\in\mathcal{H}_{5,11}}L(1/2,\chi_D)^k$ vs $Q_k(5,11)$.}
\end{table}

%\vspace{.2in}
\begin{table}[H]
\centerline{
\begin{tabular}{c|c|c|c|c}
$k$ & $M_k(5,12)$ & $Q_k(5,12)$ & difference & ratio \\ 
\hline
1 & $4.654401394029119045648$ & $4.654400599565288165843$ & $7.94464e-07$ & $1.00000017069090076905$ \\
2 & $45.47033591867196354032$ & $45.4703054260267446037$ & $3.04926e-05$ & $1.000000670605682834999$ \\
3 & $681.930213578023967161$ & $681.9301580154612560079$ & $5.55626e-05$ & $1.000000081478377306043$ \\
4 & $13331.77182957562186018$ & $13331.78726162052641967$ & $-0.015432$ & $0.999998842462409448644$ \\
5 & $309607.9020328393226788$ & $309608.1472707221788929$ & $-0.245238$ & $0.9999992079088195254194$ \\
6 & $8077636.649190943197238$ & $8077624.699048311052369$ & $11.9501$ & $1.000001479412955835001$ \\
7 & $228659527.1493859795208$ & $228658788.966048358131$ & $738.183$ & $1.000003228318233291249$ \\
8 & $6867842716.914419001117$ & $6867841634.620738593006$ & $1082.29$ & $1.00000015758861924717$ \\
9 & $215668267720.2325918011$ & $215671097846.3146046479$ & $-2.83013e+06$ & $0.9999868775829943167874$ \\
10 & $7010909280434.801886765$ & $7011206914849.719740156$ & $-2.97634e+08$ & $0.9999575487617848698201$ \\
\hline
\end{tabular}
}
\caption
{$M_k(5,12)$=$195312500^{-1} \sum_{D(x)\in\mathcal{H}_{5,12}}L(1/2,\chi_D)^k$ vs $Q_k(5,12)$.}
\end{table}

%\vspace{.2in}
\begin{table}[H]
\centerline{
\begin{tabular}{c|c|c|c|c}
$k$ & $M_k(3,13)$ & $Q_k(3,13)$ & difference & ratio \\ 
\hline
1 & $5.710384491306550387427$ & $5.710336021545693923735$ & $4.84698e-05$ & $1.000008488075075368984$ \\
2 & $79.01975914340451932061$ & $79.01896720095370412587$ & $0.000791942$ & $1.000010022181747847959$ \\
3 & $1770.144898438187087668$ & $1770.108824445967349489$ & $0.036074$ & $1.000020379533575303827$ \\
4 & $51913.19970116326269693$ & $51911.40410226095204163$ & $1.7956$ & $1.000034589680887334151$ \\
5 & $1785178.554900046977396$ & $1785085.94328058320004$ & $92.6116$ & $1.000051880762274996239$ \\
6 & $67873237.3947317133838$ & $67870093.08716805240916$ & $3144.31$ & $1.000046328322544402952$ \\
7 & $2760851654.820987619395$ & $2760898873.542778898848$ & $-47218.7$ & $0.9999828973374418859096$ \\
8 & $117829045375.9911859183$ & $117848552675.9647081734$ & $-1.95073e+07$ & $0.9998344714505984698264$ \\
9 & $5212177572584.563015279$ & $5214335433244.846855522$ & $-2.15786e+09$ & $0.9995861676549371857998$ \\
10 & $237048460599876.5060545$ & $237230552226057.5905753$ & $-1.82092e+11$ & $0.999232427592178055752$ \\
\hline
\end{tabular}
}
\caption
{$M_k(3,13)$=$1062882^{-1} \sum_{D(x)\in\mathcal{H}_{3,13}}L(1/2,\chi_D)^k$ vs $Q_k(3,13)$.}
\end{table}

%\vspace{.2in}
\begin{table}[H]
\centerline{
\begin{tabular}{c|c|c|c|c}
$k$ & $M_k(3,14)$ & $Q_k(3,14)$ & difference & ratio \\ 
\hline
1 & $4.707406146004197020658$ & $4.707399252588057470547$ & $6.89342e-06$ & $1.00000146437890003917$ \\
2 & $47.62537772288575735518$ & $47.62540907500632349621$ & $-3.13521e-05$ & $0.9999993416934116667874$ \\
3 & $734.5698773629301869476$ & $734.5805919276818747064$ & $-0.0107146$ & $0.9999854140377932247518$ \\
4 & $14428.2643076236746704$ & $14428.74431535543910537$ & $-0.480008$ & $0.9999667325360216135754$ \\
5 & $327860.1672995015230248$ & $327878.7206791626643333$ & $-18.5534$ & $0.9999434138951661451567$ \\
6 & $8176125.594815910182649$ & $8176771.02985183297252$ & $-645.435$ & $0.9999210648025282315549$ \\
7 & $217115876.0852813531656$ & $217133701.6376359054959$ & $-17825.6$ & $0.9999179051790665801582$ \\
8 & $6029316864.523584287498$ & $6029554103.41727163357$ & $-237239$ & $0.9999606539903916128607$ \\
9 & $173111253375.948678331$ & $173097704368.0710223596$ & $1.35490e+07$ & $1.00007827375832117143$ \\
10 & $5100152365967.425716091$ & $5098632913159.453941573$ & $1.51945e+09$ & $1.00029801180705716269$ \\
\hline
\end{tabular}
}
\caption
{$M_k(3,14)$=$3188646^{-1} \sum_{D(x)\in\mathcal{H}_{3,14}}L(1/2,\chi_D)^k$ vs $Q_k(3,14)$.}
\end{table}

%\vspace{.2in}
\begin{table}[H]
\centerline{
\begin{tabular}{c|c|c|c|c}
$k$ & $M_k(3,15)$ & $Q_k(3,15)$ & difference & ratio \\ 
\hline
1 & $6.444523381931924617441$ & $6.444536693201652192808$ & $-1.33113e-05$ & $0.9999979344877124188869$ \\
2 & $109.7499547245450694558$ & $109.7507187605598090365$ & $-0.000764036$ & $0.9999930384418127916091$ \\
3 & $3183.809844081673755951$ & $3183.853347913461213922$ & $-0.0435038$ & $0.9999863361069014161053$ \\
4 & $124342.7729226484856941$ & $124346.6094296010463177$ & $-3.83651$ & $0.9999691466701813637648$ \\
5 & $5787791.045784771300337$ & $5788224.648068712848916$ & $-433.602$ & $0.9999250888985301868983$ \\
6 & $301059018.8940758921497$ & $301101235.2359253549406$ & $-42216.3$ & $0.9998597935281919898824$ \\
7 & $16884124578.35074000199$ & $16887585330.58731680708$ & $-3.46075e+06$ & $0.9997950712213244256056$ \\
8 & $999516139114.1778849258$ & $999765221174.3504308231$ & $-2.49082e+08$ & $0.9997508594469009758296$ \\
9 & $61630814297036.52818885$ & $61647035026636.81900097$ & $-1.62207e+10$ & $0.9997368773762877312361$ \\
10 & $3923376265666177.708666$ & $3924344564045026.44537$ & $-9.68298e+11$ & $0.9997532585727256678234$ \\
\hline
\end{tabular}
}
\caption
{$M_k(3,15)$=$9565938^{-1} \sum_{D(x)\in\mathcal{H}_{3,15}}L(1/2,\chi_D)^k$ vs $Q_k(3,15)$.}
\end{table}

%\vspace{.2in}
\begin{table}[H]
\centerline{
\begin{tabular}{c|c|c|c|c}
$k$ & $M_k(3,16)$ & $Q_k(3,16)$ & difference & ratio \\ 
\hline
1 & $5.441593663908911049183$ & $5.44159992424401573962$ & $-6.26034e-06$ & $0.9999988495414598933115$ \\
2 & $70.04046859007929975499$ & $70.04057073846899985698$ & $-0.000102148$ & $0.9999985415825624619284$ \\
3 & $1448.922020668379138097$ & $1448.930247315431886658$ & $-0.00822665$ & $0.999994322261497411048$ \\
4 & $39229.51451253235302518$ & $39230.00535754890487004$ & $-0.490845$ & $0.9999874880206597424508$ \\
5 & $1247448.818507931641297$ & $1247476.280762623550041$ & $-27.4623$ & $0.9999779857500175312687$ \\
6 & $43941730.00487174101086$ & $43943354.00708841730286$ & $-1624$ & $0.9999630432803009454547$ \\
7 & $1658947112.231571005672$ & $1659057502.345166009773$ & $-110390$ & $0.9999334621533979613097$ \\
8 & $65816178711.03131525193$ & $65824479170.36143594316$ & $-8.30046e+06$ & $0.9998739001138370098827$ \\
9 & $2710058461030.138083664$ & $2710694875002.238384995$ & $-6.36414e+08$ & $0.9997652210959008155407$ \\
10 & $114863654355609.5971023$ & $114911144985484.4439671$ & $-4.74906e+10$ & $0.9995867186783244926063$ \\
\hline
\end{tabular}
}
\caption
{$M_k(3,16)$=$28697814^{-1} \sum_{D(x)\in\mathcal{H}_{3,16}}L(1/2,\chi_D)^k$ vs $Q_k(3,16)$.}
\end{table}

%\vspace{.2in}
\begin{table}[H]
\centerline{
\begin{tabular}{c|c|c|c|c}
$k$ & $M_k(3,17)$ & $Q_k(3,17)$ & difference & ratio \\ 
\hline
1 & $7.178737839030501043015$ & $7.17873736485761046188$ & $4.74173e-07$ & $1.000000066052408171718$ \\
2 & $147.3726497579550442855$ & $147.3725161321440454976$ & $0.000133626$ & $1.000000906721378625103$ \\
3 & $5404.506101895536199984$ & $5404.49242409700269409$ & $0.0136778$ & $1.000002530820188131028$ \\
4 & $274060.1660103541922629$ & $274058.9817947832475316$ & $1.18422$ & $1.000004321024486004542$ \\
5 & $16832953.27879710470395$ & $16832847.16481320120331$ & $106.114$ & $1.000006303983091186001$ \\
6 & $1167928626.57377059563$ & $ 1167920813.9644438571$ & $7812.61$ & $1.000006689331359905344$ \\
7 & $88062690804.08967547081$ & $88062582866.71670697102$ & $107937$ & $1.000001225689384240111$ \\
8 & $7052055863098.318652111$ & $7052168134243.813217808$ & $-1.12271e+08$ & $0.9999840799108362999548$ \\
9 & $591144818225498.1663163$ & $591174439968232.2018155$ & $-2.96217e+10$ & $0.9999498933973944690917$ \\
10 & $51372433793444437.15117$ & $51377776383644053.11002$ & $-5.34259e+12$ & $0.9998960135962342512948$ \\
\hline
\end{tabular}
}
\caption
{$M_k(3,17)$=$86093442^{-1} \sum_{D(x)\in\mathcal{H}_{3,17}}L(1/2,\chi_D)^k$ vs $Q_k(3,17)$.}
\end{table}

%\vspace{.2in}
\begin{table}[H]
\centerline{
\begin{tabular}{c|c|c|c|c}
$k$ & $M_k(3,18)$ & $Q_k(3,18)$ & difference & ratio \\ 
\hline
1 & $6.175801337371783637064$ & $6.175800595899974008692$ & $7.41472e-07$ & $1.000000120060840390576$ \\
2 & $98.4198929258615830515$ & $98.41984756709154860716$ & $4.53588e-05$ & $1.000000460870151252001$ \\
3 & $2648.54819782900739719$ & $2648.548299692500437867$ & $-0.000101863$ & $0.9999999615398771272156$ \\
4 & $95776.99330883472578033$ & $95777.07102537293038863$ & $-0.0777165$ & $0.9999991885684394752487$ \\
5 & $4129734.976650697670257$ & $4129735.205366196747353$ & $-0.228715$ & $0.9999999446173936818269$ \\
6 & $199191998.5992826340305$ & $199190826.0798038029441$ & $1172.52$ & $1.000005886413053788573$ \\
7 & $10369942932.1902759808$ & $10369724943.99307832651$ & $217988$ & $1.000021021598776728345$ \\
8 & $570422300453.0205939942$ & $570394265112.5694039534$ & $2.80353e+07$ & $1.000049150810528673817$ \\
9 & $32711546699641.20745935$ & $32708464677244.22990219$ & $3.08202e+09$ & $1.00009422705796159752$ \\
10 & $1938245416991953.993278$ & $1937933951306313.043464$ & $3.11466e+11$ & $1.00016072048556195558$ \\
\hline
\end{tabular}
}
\caption
{$M_k(3,18)$=$258280326^{-1} \sum_{D(x)\in\mathcal{H}_{3,18}}L(1/2,\chi_D)^k$ vs $Q_k(3,18)$.}
\label{table:last}
\end{table}
\normalsize

\section{Isomorphic hyperelliptic curves}
\label{sec:iso}

We took advantage,
in tabulating zeta functions, and also in deriving the formulas described below in
Sections~\ref{sec:d=3} and~\ref{sec:d=4}, of the fact that the same zeta functions in
$\mathcal H_{q,d}$ arise repeatedly.

For $D(x) \in \mathcal H_{q,d}$, let us denote its coefficients as $c_n=c_n(D)$:
\begin{equation}
    \label{eq:D}
    D(x) = x^d + c_{d-1} x^{d-1} + \ldots c_1 x + c_0.
\end{equation}
If $d \in \F_q$ is non-zero, i.e. if $p$, the characteristic of $F_q$ does not divide $d$,
then, on binomial expanding and rearranging the resulting double sum:
\begin{eqnarray}
    \label{eq:D(x+u)}
    D(x+u) &=& \sum_{n=0}^d c_n(x+u)^n = \sum_{n=0}^d c_n \sum_{j=0}^n {n \choose j} x^j u^{n-j}\notag \\
    &=& \sum_{j=0}^d x^j \sum_{n=j}^d c_n {n\choose n-j} u^{n-j} = x^d + x^{d-1}(du+c_{d-1})+\ldots.
\end{eqnarray}
we can choose $u=-d^{-1} c_{d-1}$ so as to make the coefficient of $x^{d-1}$ equal to zero.
Furthermore, $D(x)$ is square-free if and only if $D(x+u)$ is square-free.

Thus, for $p \not| d$,
let $\tilde{\mathcal{H}}_{q,d}$ denote the set
\begin{equation}
    \label{eq:H tilde}
    \tilde{\mathcal{H}}_{q,d} =
    \br{D(x) \in \mathcal H_{q,d}: c_{d-1}=0}
\end{equation}
Thus, in the case that $p\not| d$, the set $\mathcal H_{q,d}$ can be
partitioned into $q$ subsets of equal size, each one obtained from
$\tilde{\mathcal{H}}_{q,d}$ by a change of variable $x \to x-u$, $u \in \F_q$.

For example, in the case that $d=3$ and $\F_q$ is not of characteristic $3$, each $D(x) \in \tilde{\mathcal{H}}_{q,3}$
is expressed as $x^3+Ax+B$, with $A,B \in \F_q$. When $d=3$, the square-free condition is equivalent $D(x)$ not having a repeated root
in $\F_q$.

If we let $D(x) \in \tilde{\mathcal{H}}_{q,d}$, and $D_2(x) = D(x-u) \in \mathcal{H}_{q,d}$, then
their associated zeta functions are equal, because both have the same point counts
over any $\F_{q^r}$ as we may pair up points $(x,y)\in \F_{q^r}\times \F_{q^r}$ on
$y^2=D(x)$ with points $(x+u,y)$ on $y^2=D_2(x)$.

Therefore, for $p \not| d$, we can write:
\begin{equation}
    \label{eq:moment H tilde}
    \sum_{D(x)\in\mathcal{H}_{q,d}}L(1/2,\chi_D)^k
    = q \sum_{D(x)\in\tilde{\mathcal{H}}_{q,d}}L(1/2,\chi_D)^k, \qquad \text{if $p \not| d$.}
\end{equation}

There are yet additional isomorphisms, though we did not exploit these in our work.
Given $D(x) \in \mathcal{H}_{q,d}$ (or  $\in \tilde{\mathcal{H}}_{q,d}$),
consider, for $a \in \F_q, a \neq 0$, the polynomial $a^d D(a^{-1} x) \in
\mathcal H_{q,d}$ (resp. $\in \tilde{\mathcal{H}}_{q,d}$). If $a^d$ is a square (and non-zero)
in $\F_q$ (if $d$ is even, or if $a$ is itself a square), then the hyperelliptic
curves $y^2=D(x)$ and $y^2=a^d D(a^{-1} x)=x^d+ a c_{d-1}x^{d-1} +a^2 c_{d-2}
x^{d-2} + \ldots a^d$ have the same number of solutions over any $\F_{q^r}$. This can
be seen by pairing up $(x,y)$ on the first curve with $(ax,\sqrt{a^d}y)$, where
$\sqrt{a^d}$ denotes either square root of $a^d$ in $\F_q$, on the second
curve.

\section{Moment formulas when $d=3$}
\label{sec:d=3}

In this section we assume that $d=3$, and the characteristic of $F_q$ is not 3, so
that each $D(x) \in \tilde{\mathcal{H}}_{q,3}$ is of the form $D(x)=x^3+Ax+B$,
we have that
\begin{equation}
    \label{eq:L d=3}
    \mathcal L(u,\chi_D) = 1 -a_q u +qu^2,
\end{equation}
where
\begin{equation}
    \label{eq:a_q d=3}
    a_{q}:= - \sum_{x \in \F_{q}} \leg{x^3+Ax+B}{\F_{q}}.
\end{equation}
Thus,
\begin{eqnarray}
    \label{eq:moment H tilde b}
    \sum_{D(x)\in\mathcal{H}_{q,3}}L(1/2,\chi_D)^k
    = q \sum_{D(x)\in\tilde{\mathcal{H}}_{q,3}}L(1/2,\chi_D)^k
    = q \sum_{D(x)\in\tilde{\mathcal{H}}_{q,3}}(2-a_q/q^{1/2})^k
    = q \sum_{j=0}^k {k \choose j} \frac{2^{k-j}}{q^{j/2}}
    \sum_{D(x)\in\tilde{\mathcal{H}}_{q,3}} (-a_q)^j. \notag \\
\end{eqnarray}
Now the odd moments of $a_q$ are all equal to 0:
\begin{equation}
    \label{eq:odd moments}
    \sum_{D(x)\in\tilde{\mathcal{H}}_{q,3}} a_q^j =0, \qquad \text{if $j$ is odd}.
\end{equation}
That is because may can pair up each $D(x)$ that produces
a given value of $a_q=a_q(D(x))$, with another curve $\tilde{D}(x)$ that produces $a_q(\tilde{D}(x))=-a_q(D(x))$.
This can be achieved as follows.
Let $a$ be a non-square in $\F_q$. Let $\tilde{D}(x) := a^3 D(a^{-1}x) = x^3 + a^2 A x + a^3 B$.
Then
\begin{eqnarray}
    \label{eq:a_q D tilde}
    a_q(\tilde{D}(x)) = - \sum_{x \in \F_{q}} \leg{a^3( (a^{-1}x)^3+A(a^{-1}x)+B)}{\F_{q}}
    = - \leg{a}{\F_q}^3 \sum_{x \in \F_{q}} \leg{ (a^{-1}x)^3+Aa^{-1}x+B}{\F_{q}} = -a_q(D(x)), \notag \\
\end{eqnarray}
the last equality holding
because $(a|\F_q) = -1$, and because, $a^{-1}x$ runs over all of $\F_q$ as $x$ does.

Birch~\cite{B} used the Selberg trace formula to determine the even moments of
$a_q(D(x))$ for the set of {\it all} $D(x)=x^3+Ax+B$, with $A,B \in \F_q$, i.e.
without the square-free condition. He restricted to $q=p$, i.e. prime fields,
with $p>3$. Thus, for the remainder of this section, we restrict to $q=p>3$,
as well.

For $j$ even, Birch defines
\begin{equation}
    \label{eq:S}
    S_{j/2}(p) = \sum_{A,B=0}^{p-1} \left( \sum_{x=0}^{p-1} \leg{x^3+Ax+B}{p} \right)^j
\end{equation}
and obtains a formula for $S_{j/2}(p)$:
\begin{multline}
    \label{eq:birch S}
    S_{j/2}(p)= (p-1)\bigg(1+\frac{j!}{(j/2)!(j/2+1)!}p^{j/2+1}
    - \sum_{l=1}^{j/2} \frac{j! (2l+1)}{(j/2-l)!(j/2+l+1)!}p^{j/2-l}
    (\tr_{2l+2}(T_p)+1)
    \bigg),
\end{multline}
where $\tr_{2l}(T_n)$ is the trace of the Hecke operator $T_n$ acting on the space of
cusp forms of weight $2l$ for the full modular group, i.e. acting on $S_{2l}(\SL_2(\Z))$:
\begin{equation}
    \label{eq:trace T_n}
    \tr_{2l}(T_n)
    =
    \sum_{f \in H_{2l}} \lambda_f(n),
\end{equation}
where $f$ runs over the $\dim(S_{2l})\sim l/6$ eigenfunctions of the all the Hecke operators,
and where $\lambda_f(n)$ are their Fourier coefficients, normalized so that $\lambda(1)=1$.

The term $\tr_{2l+2}(T_p)$ first contributes to $S_{j/2}(p)$ when $j=10$,
because $\dim(S_{2l+2})=0$ for $2l+2=2,4,6,8,10$, whereas $\tr_{12}(T_p)=\tau(p)$, the Ramanujan
$\tau$ function.

Thus $S_1(p), \ldots, S_4(p)$ are polynomials in
$p$, but the higher moments $S_5(p), S_6(p), \ldots$ can be expressed as polynomials in $p$ and
the coefficients of Hecke eigenforms.

We note that there is a typo in the example formulas of Birch's Theorem 2. His
stated formulas for $S_1(p),\ldots, S_5(p)$ are all missing the factor of
$p-1$, and should read:
$S_1(p)=(p-1)p^2$, $S_2(p)= (p-1)(2\,{p}^{3}-3\,p)$,
$S_3(p)=(p-1)(5\,{p}^{4}-9\,{p}^{2}-5\,p)$, $S_4(p)=(p-1)(14\,{p}^{5}-28\,{p}^{3}-20\,{p}^{2}-7\,p)$,
$S_5(p)=(p-1)(42\,{p}^{6}-90\,{p}^{4}-75\,{p}^{3}-35\,{p}^{2}-9\,p-\tau(p))$, \ldots.

Now, Birch sums over all $A,B \in F_p$, whereas we are summing over square-free $x^3+Ax+B \in \F_p[x]$.
If $x^3+Ax+B$ is not square-free, we can write it as
\begin{equation}
    x^3+Ax+B= (x+s)^2(x+t)
\end{equation}
for some $s,t \in \F_p$. Comparing coefficients of $x^2$ gives $t=-2s \mod p$, hence
$x^3+Ax+B=(x+s)^2(x-2s)$, so that
\begin{equation}
    \leg{x^3+Ax+B}{p} = \leg{x+s}{p}^2 \leg{x-2s}{p}.
\end{equation}
For given $s \in \F_p$, $(x+s|p)^2=1$, unless $x=-s$, in which case $(x+s|p)^2=0$.
Thus
\begin{equation}
    a_{p}((x+s)^2(x-2s)) = - \sum_{x\neq-s \mod p} \leg{x-2s}{p} = \leg{-3s}{p},
\end{equation}
the latter equality because the full sum of $(x-2s|p)$ over all $x \mod p$ is 0.
Thus, when $j$ is even, $a_p((x+s)^2(x-2s))^j=1$, when $s\neq 0 \mod p$, and equals 0
if $s=0\mod p$. 

Therefore, we have shown that
\begin{equation}
    \label{eq:a_p moment}
    \sum_{D(x)\in\tilde{\mathcal{H}}_{3,p}} (-a_p)^j
    =
    \begin{cases}
        S_{j/2}(p)-(p-1), &\text{if $j$ is even,} \\
        0, &\text{if $j$ is odd.}
    \end{cases}
\end{equation}
Combining the above with \eqref{eq:birch S} and \eqref{eq:moment H tilde b} gives
\begin{eqnarray}
    \label{eq:moment H tilde nearly done}
    &&\sum_{D(x)\in\mathcal{H}_{3,p}}L(1/2,\chi_D)^k
    = p(p-1) \sum_{j=0, \text{even}}^k {k \choose j} \frac{2^{k-j}}{p^{j/2}} \times \notag \\
    &&\bigg(\frac{j!}{(j/2)!(j/2+1)!}p^{j/2+1}
    - \sum_{l=1}^{j/2} \frac{j!(2l+1)}{(j/2-l)!(j/2+l+1)!}p^{j/2-l}
    (\tr_{2l+2}(T_p)+1)
    \bigg). \notag \\
\end{eqnarray}
Simplifying, and using
\begin{equation}
    \label{eq:binomial 1}
    \sum_{j=0, \text{even}}^k {k \choose j}
    \frac{j!2^{k-j}}{(j/2)!(j/2+1)!} = \frac{2}{k+2}{2k+1\choose k},
\end{equation}
(this identity is derived in greater generality below)
we have
\begin{eqnarray}
    \label{eq:moment H tilde final}
    \frac{1}{p^3-p^2}\sum_{D(x)\in\mathcal{H}_{3,p}}L(1/2,\chi_D)^k
    = \frac{2}{k+2}{2k+1\choose k}
    -\sum_{j=0, \text{even}}^k {k \choose j}
    2^{k-j}
    \sum_{l=1}^{j/2}
    \frac{j!(2l+1)}{(j/2-l)!(j/2+l+1)!}p^{-l-1}
    (\tr_{2l+2}(T_p)+1).
    \notag \\
\end{eqnarray}
Rearranging the sum over $j$ and $l$, the right side above equals
\begin{eqnarray}
    \label{eq:rearrange sum}
    \frac{2}{k+2}{2k+1\choose k}
    -\sum_{l=1}^{\lfloor k/2 \rfloor}
    \frac{(2l+1) (\tr_{2l+2}(T_p)+1)}
    {p^{l+1}}
    \sum_{j=2l, \text{even}}^k {k \choose j}
    \frac{j!2^{k-j}}{(j/2-l)!(j/2+l+1)!}. \notag
    \\
\end{eqnarray}
Now, the inner sum over $j$ equals
\begin{equation}
    \label{eq:inner sum}
    \frac{2^{k-2l}\Gamma(k+1)}{\Gamma(2l+2)\Gamma(k-2l+1)} {}_2F_1(l-k/2,l+1/2-k/2; 2l+2;1),
\end{equation}
where
${}_2F_1(a,b;c;z)$ is the Gauss hypergeometric function
\begin{equation}
    {}_2F_1(a,b;c;z) =  \frac{\Gamma(c)}{\Gamma(a)\Gamma(b)} \sum_{n=0}^\infty
    \frac{\Gamma(a+n)\Gamma(b+n)}{\Gamma(c+n)}
     \ \frac{z^n}{n!}.
\end{equation}
One easily checks this by comparing, with $a=l-k/2$, $b=l+1/2-k/2$, $c=2l+2$, each term in the above sum with the terms in
the sum over $j$ in~\eqref{eq:rearrange sum}.
Note that, with this choice of $a$ and $b$, the terms in the above series vanish if $2n>k-2l$, and
the hypergeometric series terminates.

Using Gauss' identity
\begin{equation}
    \label{eq:gauss identity}
    {}_2F_1(a,b;c;1) = \frac{\Gamma(c)\Gamma(c-a-b)}{\Gamma(c-b)\Gamma(c-b)}, \qquad \Re(c-a-b)>0,
\end{equation}
we thus have, on simplifying,
\begin{equation}
    \label{eq:inner sum b}
    \sum_{j=2l, \text{even}}^k {k \choose j}
    \frac{j!2^{k-j}}{(j/2-l)!(j/2+l+1)!}
    =
    \frac{2^{k-2l}\Gamma(k+1)\Gamma(k+3/2)}{\Gamma(k-2l+1)\Gamma(k/2+2+l)\Gamma(k/2+3/2+l)}.
\end{equation}
Applying the Legendre duplication formula, $\Gamma(2z)= 2^{2z-1} \pi^{-1/2} \Gamma(z) \Gamma(z+1/2)$
we can simplify both the numerator (with $z=k+1$) and denominator (with $z=k/2+3/2+l$) to get
\begin{equation}
    \frac{2}{k+2l+2}{2k+1\choose k-2l}.
\end{equation}
Returning to ~\eqref{eq:moment H tilde final}, we thus have the following theorem:
\begin{theorem}
\label{theorem:d=3}
Let $p>3$ be prime. Then
\begin{eqnarray}
    \label{eq:moment H tilde final b}
    \frac{1}{p^3-p^2}\sum_{D(x)\in\mathcal{H}_{3,p}}L(1/2,\chi_D)^k
    = \frac{2}{k+2}{2k+1\choose k}
    -2 \sum_{l=1}^{\lfloor k/2 \rfloor}
    {2k+1 \choose k-2l}
    \frac{(2l+1)(\tr_{2l+2}(T_p)+1)}{(k+2l+2)p^{l+1}}.
    \notag \\
\end{eqnarray}
\end{theorem}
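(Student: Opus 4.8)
The plan is to assemble the chain of identities already established and to reduce everything to a single combinatorial sum. Starting from the binomial expansion \eqref{eq:moment H tilde b}, I would first drop the odd moments of $a_q$ using \eqref{eq:odd moments}, then insert the square-free correction \eqref{eq:a_p moment} together with Birch's evaluation \eqref{eq:birch S} of $S_{j/2}(p)$, arriving at \eqref{eq:moment H tilde nearly done}. Dividing by $p^3-p^2=p^2(p-1)$ and tracking powers of $p$ separates the answer into a $p$-independent ``main'' piece (coming from the $p^{j/2+1}$ term in Birch's formula, whose net power of $p$ is $0$) and an ``error'' piece (the sum over $l$ involving the Hecke traces $\tr_{2l+2}(T_p)$, whose net power of $p$ is $-(l+1)$), exactly as in \eqref{eq:moment H tilde final}.

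For the main piece the remaining $j$-sum is precisely the identity \eqref{eq:binomial 1}, with value $\tfrac{2}{k+2}{2k+1 \choose k}$. Rather than prove \eqref{eq:binomial 1} in isolation, I would prove it as the $l=0$ instance of the general sum evaluated next, so that the main term and the error term are handled by one computation.

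The heart of the argument is the inner sum in \eqref{eq:rearrange sum}, obtained after interchanging the $j$- and $l$-summations: for fixed $l$ with $1\le l\le\lfloor k/2\rfloor$, one must evaluate $\sum_{j=2l,\text{ even}}^{k}{k \choose j}\tfrac{j!\,2^{k-j}}{(j/2-l)!(j/2+l+1)!}$. Writing $j=2m$ and re-indexing by $n=m-l$, I would factor out the $n=0$ term, which produces the prefactor $\tfrac{2^{k-2l}\Gamma(k+1)}{\Gamma(2l+2)\Gamma(k-2l+1)}$, and then compute the ratio of consecutive terms. A short calculation shows this ratio equals $\tfrac{(a+n)(b+n)}{(c+n)(n+1)}$ with $a=l-\tfrac k2$, $b=l+\tfrac12-\tfrac k2$, $c=2l+2$, identifying the sum with the terminating series ${}_2F_1(l-\tfrac k2,\,l+\tfrac12-\tfrac k2;\,2l+2;\,1)$ of \eqref{eq:inner sum}. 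Here $c-a-b=k+\tfrac32>0$, so Gauss' summation \eqref{eq:gauss identity} applies (with $c-a=\tfrac k2+l+2$, $c-b=\tfrac k2+l+\tfrac32$), giving \eqref{eq:inner sum b}. The Legendre duplication formula applied to $\Gamma(k+1)\Gamma(k+\tfrac32)$ in the numerator (with $z=k+1$) and to $\Gamma(\tfrac k2+2+l)\Gamma(\tfrac k2+\tfrac32+l)$ in the denominator (with $z=\tfrac k2+\tfrac32+l$) then cancels the factors of $\pi^{1/2}$ and collapses all powers of $2$ to a single factor $2$, leaving $\tfrac{2}{k+2l+2}{2k+1 \choose k-2l}$. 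Substituting this back into \eqref{eq:rearrange sum}, and reading off its $l=0$ value for the main term, yields the stated formula.

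The main obstacle is the bookkeeping in this hypergeometric step: correctly recognizing the terminating ${}_2F_1$ from the term ratio, verifying the convergence condition $\Re(c-a-b)>0$ for Gauss' theorem, and then carefully tracking all the Gamma arguments and powers of $2$ through the two applications of the duplication formula so that the whole expression collapses to a single shifted central binomial coefficient $\tfrac{2}{k+2l+2}{2k+1 \choose k-2l}$. Everything else amounts to substituting the already-proved ingredients in the correct order.
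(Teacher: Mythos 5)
Your proposal is correct and follows essentially the same route as the paper: binomial expansion, vanishing of odd moments, the square-free correction to Birch's formula, interchange of the $j$- and $l$-sums, identification of the inner sum with a terminating ${}_2F_1$ at $1$, and Gauss summation plus Legendre duplication to collapse to $\tfrac{2}{k+2l+2}\binom{2k+1}{k-2l}$ (including the paper's own remark that \eqref{eq:binomial 1} is the $l=0$ instance of the general evaluation). Your Gamma-argument bookkeeping is right, and in fact corrects a typo in the paper's statement of Gauss' identity, where $\Gamma(c-a)\Gamma(c-b)$ appears as $\Gamma(c-b)\Gamma(c-b)$.
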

The fact that our final formula for the moments (in the $d=3$ case) can be expressed so
cleanly and succinctly suggests that an alternate point of view should exist that
produces the same formula more directly. Indeed, Diaconu and Pasol~\cite{DP} have derived
an equivalent formula using multiple Dirichlet series over finite fields, though perhaps a simpler
approach can be found.

We list the first ten moments in Table \ref{table:d=3}.
\begin{table}[H]
\begin{tabular}{c|c}
 $k$ & $(p^3-p^2)^{-1} \sum_{D(x)\in\mathcal{H}_{3,p}}L(1/2,\chi_D)^k$ \\ \hline
1& $2$ \\
2& $5-p^{-2}$ \\
3& $14-6\,p^{-2}$ \\
4& $42-27\,p^{-2}-p^{-3}$ \\
5& $132-110\,p^{-2}-10\,p^{-3}$ \\
6& $429-429\,{p}^{-2}-65\,p^{-3}-p^{-4}$ \\
7& $1430-1638\,{p}^{-2}-350\,p^{-3}-7 p^{-4}$ \\
8& $4862-6188\,{p}^{-2}-1700\,{p}^{-3}-119\,p^{-4}-p^{-5}$ \\
9& $16796-23256\,{p}^{-2}-7752\,{p}^{-3}-798\,p^{-4}-18\,p^{-5}$ \\
10& $58786-87210\,{p}^{-2}-33915\,{p}^{-3}-4655\,{p}^{-4}-189\,p^{-5}-(\tau(p)+1)\,p^{-6}$  \\
\end{tabular}
\caption{Moment formulas for $d=3$, $k\leq 10$.}
\label{table:d=3}
\end{table}
It appears, from our numerical data, that~\eqref{eq:moment H tilde final b}
also holds for $\F_q$, if $k \leq 9$, i.e.
if we replace $p$ with any odd prime power $q$, whether divisible by 3 or not.
For $k\geq 10$, one would need to adjust the terms $\tr_{2l+2}(T_p)$.
For example, for $k=10$, and $q=p^2$, it appears from our tables that
one should replace $\tau(p)$ by $2\tau(p^2)-\tau(p)^2$. We do not attempt to address the
general formula here since the above suffices for the purpose of testing the Andrade-Keating conjecture,
which does not see the arithmetic terms $\tr_{2l+2}(T_p)$.

Note, for instance, that the Fourier coefficients $\lambda(p)$ of a weight $2l+2$ modular form
satisfies the Ramanujan bound:
\begin{equation}
    \label{eq:lambda}
    |\lambda(p)| < 2 p^{(2l+1)/2}.
\end{equation}
Thus, for given $k\geq 10$, the terms $\tr_{2l+2}(T_p)$ contribute, overall, an
amount to~\eqref{eq:moment H tilde final b} that is $O(p^{-1/2})$.
Furthermore, it is known that
\begin{equation}
    \lambda(p) = \Omega (p^{(2l+1)/2}).
\end{equation}
Therefore, in the case $k\geq 10$, $d=3$, and $q$ prime, we have
$\mu=1/6$, since, here, $X=p^3$, and $X^{-1/6} = p^{-1/2}$.

\section{Moment formulas when $d=4$}
\label{sec:d=4}

Birch's formula can be applied to the case of $d=4$ as well, because there is a relationship
between elliptic curves of degrees 3 and 4.

According to the table in~\ref{sec:weil},
the zeta function $\mathcal L(u,\chi_D)$ associated to $y^2=D(x)$ over $\F_q$, for $\deg{D}=4$,
equals $(1-u)(1-(a_q-1)u+qu^2)$. Here $a_q(D(x))$ is defined by~\eqref{eq:a_q}.
Substituting $u=q^{-1/2}$, binomial expanding, and rearranging the resulting double sum, we have
\begin{eqnarray}
    \label{eq:d=4 manipulations}
    \sum_{D(x)\in\mathcal{H}_{4,q}}L(1/2,\chi_D)^k
    &=& (1-q^{-1/2})^k \sum_{D \in \mathcal H_{q,4}}(2+(1-a_q)q^{-1/2})^k \notag \\
    &=&(1-q^{-1/2})^k \sum_{j=0}^{k}\binom{k}{j}\frac{2^{k-j}}{q^{j/2}}m_4(q;j),
\end{eqnarray}
where
\begin{equation}
    \label{eq:m_4}
    m_4(q;j):= \sum_{D\in \mathcal H_{q,4}}(1-a_q)^j.
\end{equation}
The connection with the moments for $d=3$ is through the following relationship.
Let
\begin{equation}
    \label{eq:m_3}
    m_3(q;j):= \sum_{D\in \mathcal H_{q,3}}(-a_q)^j.
\end{equation}
We prove, in Theorem~\ref{theorem:relationship 3,4}, that, for $q$ an odd prime power, not divisible
by 3, and for $j \geq 0$:
\begin{equation}
    \label{eq:m_3 vs m_4}
    m_4(q;j) =
    \begin{cases}
        qm_3(q;j) & \text{if $j$ even}\\
        m_3(q;j+1) & \text{if $j$ odd}.
    \end{cases}
\end{equation}

Now, equation \eqref{eq:a_p moment} gives, for prime $q=p>3$,
\begin{equation}
    \label{eq:m_3 formula}
    m_3(p;j) =
    \begin{cases}
        p(S_{j/2}(p)-(p-1)), &\text{if $j$ is even,} \\
        0, &\text{if $j$ is odd.}
    \end{cases}
\end{equation}
The extra factor of $p$ compared to~\eqref{eq:a_p moment} is to account for the fact that here our sum is
over $\mathcal H$ rather than $\tilde{\mathcal H}$.

Thus, breaking the sum on the right side of \eqref{eq:d=4 manipulations} into even and odd terms $j$, we have, for $q=p>3$,
\begin{eqnarray}
    \sum_{j=0}^{k}\binom{k}{j}\frac{2^{k-j}}{p^{j/2}}m_4(p;j)
    =p\sum_{j=0 \text{, even}}^{k}\binom{k}{j}\frac{2^{k-j}}{p^{j/2}}m_3(p;j)
    +\sum_{j=0 \text{, odd}}^{k}\binom{k}{j}\frac{2^{k-j}}{p^{j/2}}m_3(p;j+1).
\end{eqnarray}
The first sum is precisely the sum that appears in \eqref{eq:moment H tilde b}, and
\eqref{eq:moment H tilde final b} gives
\begin{eqnarray}
    \label{eq:1st sum}
    p\sum_{j=0 \text{, even}}^{k}\binom{k}{j}\frac{2^{k-j}}{p^{j/2}}m_3(p;j)
    =(p^4-p^3)\Bigg(
        \frac{2}{k+2}{2k+1\choose k}
        -2 \sum_{l=1}^{\lfloor k/2 \rfloor}
        {2k+1 \choose k-2l}
        \frac{(2l+1)(\tr_{2l+2}(T_p)+1)}{(k+2l+2)p^{l+1}}
    \Bigg). \notag \\
\end{eqnarray}
Furthermore, substituting $j=\nu-1$, the second sum equals
\begin{eqnarray}
    p^{1/2} \sum_{\nu=2 \text{, even}}^{k+1}\binom{k}{\nu-1}\frac{2^{k-\nu+1}}{p^{\nu/2}}m_3(p;\nu).
\end{eqnarray}
Using \eqref{eq:m_3 formula}, as well as Lemma~\ref{lemma:binomial} (below),
and simplifying, the second sum becomes
\begin{eqnarray}
    \label{eq:odd terms final}
    \frac{(p^4-p^3)}{p^{1/2}}
    \Bigg(
        \frac{4}{k+3}{2k+1\choose k-1}
    -4 \sum_{l=1}^{\lfloor (k+1)/2 \rfloor}
    \frac{(k^2+k+4l^2+4l)\Gamma(2k+2)(2l+1)(\tr_{2l+2}(T_p)+1)}{\Gamma(k+2l+4)\Gamma(k-2l+2)p^{l+1}}
    \Bigg).
\end{eqnarray}
Putting together \eqref{eq:odd terms final} \eqref{eq:1st sum}, we arrive at the following theorem:
\begin{theorem}
\label{theorem:d=4}
Let $p>3$ be prime. Then,
\begin{eqnarray}
    \label{eq:d=4 final}
    &&\frac{1}{p^4-p^3}
    \sum_{D(x)\in\mathcal{H}_{4,p}}L(1/2,\chi_D)^k
    =
    (1-p^{-1/2})^k
    \Bigg(
        \frac{2}{k+2}{2k+1\choose k}
        -2 \sum_{l=1}^{\lfloor k/2 \rfloor}
        {2k+1 \choose k-2l}
        \frac{(2l+1)(\tr_{2l+2}(T_p)+1)}{(k+2l+2)p^{l+1}}
    \Bigg) \notag \\
    &&+
    \frac{
        (1-p^{-1/2})^k
    }{p^{1/2}}
    \Bigg(
        \frac{4}{k+3}{2k+1\choose k-1}
    -4 \sum_{l=1}^{\lfloor (k+1)/2 \rfloor}
    \frac{(k^2+k+4l^2+4l)\Gamma(2k+2)(2l+1)(\tr_{2l+2}(T_p)+1)}{\Gamma(k+2l+4)\Gamma(k-2l+2)p^{l+1}}
    \Bigg). \notag \\
\end{eqnarray}
\end{theorem}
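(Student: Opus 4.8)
The scaffolding for this proof is already assembled in the discussion preceding the statement, so the plan is mainly to organize the ingredients and flag where the real work sits. The starting point is \eqref{eq:d=4 manipulations}, which writes the normalized quartic moment as $(1-p^{-1/2})^k$ times a binomial sum of the quartic power sums $m_4(p;j)$ from \eqref{eq:m_4}. The strategy is to transport every $m_4(p;j)$ back to the cubic power sums $m_3$, for which Birch's trace-formula evaluation \eqref{eq:m_3 formula} is available, and then to carry out two parity-separated combinatorial collapses.

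First I would apply the relationship \eqref{eq:m_3 vs m_4} (Theorem~\ref{theorem:relationship 3,4}), replacing $m_4(p;j)$ by $p\,m_3(p;j)$ for even $j$ and by $m_3(p;j+1)$ for odd $j$, and split the sum in \eqref{eq:d=4 manipulations} accordingly. The even-$j$ piece is, apart from the factor $p$, verbatim the sum evaluated in the $d=3$ analysis, so I would simply quote Theorem~\ref{theorem:d=3}, i.e. \eqref{eq:moment H tilde final b}, to obtain \eqref{eq:1st sum} together with its overall factor $p^4-p^3$ and its clean split into the main term $\tfrac{2}{k+2}\binom{2k+1}{k}$ and the Hecke-trace correction.

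For the odd-$j$ piece I would reindex by $\nu=j+1$, turning it into a sum over even $\nu$ of $\binom{k}{\nu-1}2^{k-\nu+1}p^{-\nu/2}m_3(p;\nu)$, then substitute Birch's evaluation \eqref{eq:m_3 formula}, \eqref{eq:birch S}. Exactly as in the cubic case, interchanging the order of the $\nu$-sum and the Hecke index $l$ leaves an inner binomial sum in $\nu$ that must be reduced to closed form; this is the purpose of Lemma~\ref{lemma:binomial}, the shifted-parity analogue of the hypergeometric collapse carried out in \eqref{eq:binomial 1}--\eqref{eq:inner sum b}. Applying it and tidying the resulting Gamma factors (via the Legendre duplication formula, as before) should reproduce \eqref{eq:odd terms final}, with its main term $\tfrac{4}{k+3}\binom{2k+1}{k-1}$ and its own Hecke correction. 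Adding \eqref{eq:1st sum} and \eqref{eq:odd terms final} and dividing by $p^4-p^3$ then yields the stated identity.

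Granting the two external inputs, the assembly above is essentially bookkeeping, so the genuine obstacles lie in those inputs. The combinatorial Lemma~\ref{lemma:binomial} is technical but should be a routine extension of the $d=3$ computation, proved by matching the inner sum against a terminating ${}_2F_1$ at $1$ and applying Gauss's identity \eqref{eq:gauss identity}. The conceptual heart is instead the relationship \eqref{eq:m_3 vs m_4}, which I have taken as given here. The natural way to prove it is geometric: a monic square-free quartic $y^2=D(x)$ has leading coefficient the square $1$, hence two rational points at infinity, so the associated genus-one curve is isomorphic over $\F_q$ to its Jacobian, an elliptic curve in depressed Weierstrass form $Y^2=X^3-27I(D)X-27J(D)$ built from the classical invariants $I(D),J(D)$ of the quartic. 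Under this isomorphism the Frobenius traces agree, which gives the pointwise identity $1-a_q(D)=-a_q(E_J)$ and explains why $1-a_q$ for the quartic is the right quantity to raise to powers. The parity-dependent factor---$p$ for even exponents, an index shift for odd---must then come out of counting the $\F_q$-fibers of the invariant map $D\mapsto(I(D),J(D))$: how many monic square-free quartics share a given Jacobian, and how the sign of the trace is distributed across such a fiber. Pinning down these fiber counts, together with the square-free sieving, is the delicate step, and is where I would expect to spend most of the effort.
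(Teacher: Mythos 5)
Your proposal is correct and follows the paper's proof of Theorem~\ref{theorem:d=4} essentially verbatim: the same starting point \eqref{eq:d=4 manipulations}, the same parity split via \eqref{eq:m_3 vs m_4}, the same reuse of the $d=3$ evaluation for the even-$j$ piece, and the same reindexing $\nu=j+1$ plus Lemma~\ref{lemma:binomial} for the odd-$j$ piece. The only divergence is your sketched route to Theorem~\ref{theorem:relationship 3,4} through the classical invariants and the Jacobian, whereas the paper proves it directly via Mordell's explicit change of variables \eqref{eq:mordell} and a $2q$-per-twist-pair fiber count; since you take that relationship as given, this does not affect the proof of the statement at hand.
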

The above formula seems to hold (based on our tables), for $k\leq 8$,
if we replace $p$ by any odd prime power $q$.
The Hecke eigenvalues enter starting with $k=9$.

Therefore, in the case $k\geq 9$, $d=4$, and $q$ prime, we have
$\mu=1/8$, since, here, $X=p^4$, and $X^{-1/8} = p^{-1/2}$.
Expanding this formula, for $k=1,2,3,4,5$, and collecting powers of $p$, gives Table \ref{table:d=4}.
\begin{table}[H]
\begin{tabular}{c|l}
$k$ & $(p^4-p^3)^{-1}\sum_{D(x)\in\mathcal{H}_{q,4}}L(1/2,\chi_D)^k$ \\ \hline
1& $2-p^{-1/2}-{p}^{-1}-{p}^{-5/2}+{p}^{-3}$\\
2 & $5-6\,p^{-1/2}-3\,{p}^{-1}+4\,{p}^{-3/2}-{p}^{-2}-2\,{p}^{-5/2}+7\,{p}^{-3}-4\,{p}^{-7/2}$ \\
3& $14-28\,p^{-1/2}+28\,{p}^{-3/2}-20\,{p}^{-2}+3\,{p}^{-5/2}+27\,{p}^{-3}-40\,{p}^{-7/2}+18
\,{p}^{-4}-3\,{p}^{-9/2}+{p}^{-5}$ \\
4& $42-120\,p^{-1/2}+60\,{p}^{-1}+120\,{p}^{-3/2}-177\,{p}^{-2}+100\,{p}^{-5/2}+61\,{p}^{-3}$ \\
 & $-232\,{p}^{-7/2}+223\,{p}^{-4}-100\,{p}^{-9/2}+31\,{p}^{-5}-8\,{p}^{-11/2}$\\
5& $132-495\,p^{-1/2}+495\,{p}^{-1}+330\,{p}^{-3/2}-1100\,{p}^{-2}+1034\,{p}^{-5/2}-230\,{p}^
{-3}$\\
 & $-985\,{p}^{-7/2}+1665\,{p}^{-4}-1286\,{p}^{-9/2}+614\,{p}^{-5}-225\,{p}^{-11/2}+55\,{p}^{-6}-5\,
{p}^{-13/2}+{p}^{-7}$
\end{tabular}
\caption{Moment formulas for $d=4$, $k\leq 5$.}
\label{table:d=4}
\end{table}

\begin{theorem}
\label{theorem:relationship 3,4}
With $m_3(q;j)$ and $m_4(q;j)$ defined by~\eqref{eq:m_3} and~\eqref{eq:m_4},
the relationship~\eqref{eq:m_3 vs m_4} holds for any odd prime power $q$ not divisible by 3,
and any $j \geq 0$.
\end{theorem}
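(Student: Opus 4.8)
The plan is to prove the single identity
\begin{equation}
m_4(q;j) = q\,m_3(q;j) + m_3(q;j+1), \qquad j \ge 0,
\end{equation}
and then observe that it is equivalent to the two cases of \eqref{eq:m_3 vs m_4}. Indeed, by the vanishing of the odd moments \eqref{eq:odd moments} (which, after partitioning $\mathcal H_{q,3}$ into the $q$ translates of $\tilde{\mathcal H}_{q,3}$ as in Section~\ref{sec:iso} — valid since $3\nmid q$ — yields $m_3(q;j)=0$ for $j$ odd), the term $m_3(q;j+1)$ drops out when $j$ is even and the term $q\,m_3(q;j)$ drops out when $j$ is odd, recovering \eqref{eq:m_3 vs m_4} exactly. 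Writing $S(D)=\sum_{x\in\F_q}\leg{D(x)}{\F_q}=-a_q(D)$ as in \eqref{eq:a_q}, the right-hand side of the displayed identity equals $\sum_{D\in\mathcal H_{q,3}}(-a_q)^j\,(q-a_q)$; since $q-a_q(D) = \sum_{x\in\F_q}\bigl(1+\leg{D(x)}{\F_q}\bigr)$ is precisely the number of affine points on $y^2=D(x)$, the whole problem reduces to
\begin{equation}
\sum_{D\in\mathcal H_{q,4}}(1-a_q(D))^j
=\sum_{D\in\mathcal H_{q,3}}\ \sum_{\substack{(x_0,y_0)\in\F_q^2\\ y_0^2=D(x_0)}} (-a_q(D))^j .
\end{equation}

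The main idea for this reduced identity is a correspondence between monic square-free quartics and pairs (monic square-free cubic, affine point), arising from the two models of a genus-$1$ curve described in Section~\ref{sec:weil}. A monic cubic $y^2=D_3(x)$ has one point at infinity, whereas a monic quartic $y^2=D_4(x)$ has two (its leading coefficient $1$ is a square); for birationally equivalent models the smooth projective curves are isomorphic, so they have the same number of $\F_q$-points, and counting the points at infinity ($1$ versus $2$) together with \eqref{eq:a_q} forces $a_q(D_4)=a_q(D_3)+1$, i.e. $1-a_q(D_4)=-a_q(D_3)$, matching the summands on the two sides. Concretely, I would start from a pair $(D_3,(x_0,y_0))$ with $y_0\neq0$ and move the affine point to infinity via the substitution $x=x_0+1/x'$ together with a rescaling of $y$; expanding $D_3(x_0+1/x')$, clearing denominators, and normalizing to a monic polynomial by the scaling isomorphisms $a^dD(a^{-1}x)$ of Section~\ref{sec:iso} produces a monic square-free quartic $D_4$ with $a_q(D_4)=a_q(D_3)+1$. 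Running this in reverse, one passes from a quartic to a Weierstrass cubic based at one of its two points at infinity, the second point at infinity becoming the marked affine point.

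The hard part will be promoting this geometric picture to an exact bijection accounting for every term. Three points need care. First, surjectivity: the naive "move a point to infinity'' map lands only in quartics with a rational root, so to reach irreducible quartics one must instead base the cubic model at a point at infinity of the quartic (always rational), and check that the resulting monic Weierstrass cubic lies in $\mathcal H_{q,3}$. Second, the monic normalization invokes the scaling isomorphisms, so one must verify these neither over- nor under-count, and that the hyperelliptic involution collapses the count correctly (the pair $(x_0,\pm y_0)$ yields the same quartic). Third, the affine points with $y_0=0$ — the rational $2$-torsion — must be matched with the quartics for which the difference of the two points at infinity is $2$-torsion, and square-freeness must be seen to be preserved throughout; this boundary bookkeeping is where the delicacy lies. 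As a self-contained alternative that bypasses the geometry, one can expand $m_3(q;j)$ and $m_4(q;j)$ as sums over tuples of the universal character sums $\sum_{E\ \mathrm{monic},\,\deg E=e}\leg{\prod_i E(x_i)}{\F_q}$ — detecting square-freeness by M\"obius inversion and using equidistribution of the values $E(x_i)$ — and verify that $m_4(q;j)-q\,m_3(q;j)-m_3(q;j+1)$ vanishes by combinatorial identities among binomial coefficients, so that the genuinely arithmetic small-degree character sums cancel without ever being evaluated.
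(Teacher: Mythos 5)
Your overall strategy is the same one the paper uses --- the correspondence between square-free monic quartics and cubics obtained by moving a point to or from infinity --- but your packaging is genuinely cleaner. Collapsing the two cases of \eqref{eq:m_3 vs m_4} into the single identity $m_4(q;j)=q\,m_3(q;j)+m_3(q;j+1)$ (legitimate, since the vanishing of odd moments transfers from $\tilde{\mathcal H}_{q,3}$ to $\mathcal H_{q,3}$ when $3\nmid q$) and recognizing the right-hand side as $\sum_{D_3}(-a_q)^j\cdot\#\{\text{affine points on }y^2=D_3\}$ is exactly the right way to see what must be proved: that the quartics lying over a fixed cubic correspond to the affine points of that cubic, all with $1-a_q(D_4)=-a_q(D_3)$. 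The paper instead splits into parities and pairs each cubic with its quadratic twist; your reformulation shows that detour to be unnecessary, since the paper's own fiber count \eqref{eq:number ABC} together with \eqref{eq:a_q miracle} already says the fiber over $(\alpha,\beta)$ has exactly $q-a_q$ elements.

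That said, as written your argument has a real gap: the correspondence is motivated but never constructed, and you explicitly defer the three points where the content lies. Your forward map $(D_3,(x_0,y_0))\mapsto$ quartic via $x=x_0+1/x'$ cannot work as stated --- it requires dividing by $y_0^2$ to reach a monic polynomial, so it breaks when $y_0=0$, and it only produces quartics with a rational root, so it misses most of $\mathcal H_{q,4}$; you acknowledge that the map must be run in reverse, but then the choice of which of the two points at infinity to base the Weierstrass model at, the verification that the two directions are mutually inverse, and the preservation of square-freeness (which the paper itself treats lightly; it follows from comparing discriminants) all remain open. The paper closes precisely this step with the explicit change of variables \eqref{eq:mordell}--\eqref{eq:alpha beta}: the exceptional point is $(X_0,Y_0)=(-A/6,-B/8)$, so the correspondence you want is the concrete assignment $(A,B,C)\leftrightarrow(\alpha,\beta,X_0,Y_0)$ with $A=-6X_0$, $B=-8Y_0$, $C=-4\alpha-3X_0^2$, which is manifestly invertible, handles $Y_0=0$ uniformly, and reduces $1-a_q(D_4)=-a_q(D_3)$ to the affine point-count comparison the paper carries out. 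Your ``self-contained alternative'' via M\"obius inversion and universal character sums is too vague to evaluate, and it is not clear the genuinely arithmetic small-degree sums would cancel. In short: right idea and a better way to organize the statement, but the proof is incomplete until the explicit correspondence (or, more simply, the fiber count) is actually written down.
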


\begin{proof}

While the relationship in \eqref{eq:m_3 vs m_4} involves sums over $\mathcal
H_{q,3}$ and $\mathcal H_{q,4}$, we establish the same relationship over the
simpler $\tilde{\mathcal H}_{q,3}$, $\tilde{\mathcal H}_{q,4}$. One can then
recover the original sums~\eqref{eq:m_3} and~\eqref{eq:m_4} by scaling both by
a factor of $q$.

Thus, let $A,B,C,\alpha,\beta \in \F_q$. To the hyperelliptic curve specified
by a quartic equation of the form $E_4: y^2=x^4+Ax^2+Bx+C$, we can associate a cubic
equation $E_3: Y^2=X^3+\alpha X+\beta$, where the two equations are related by the
rational change of variables,
\begin{equation}
    \label{eq:mordell}
    x = (Y-B/8)/(X+A/6), \qquad y=-x^2+2X-A/6
\end{equation}
so that, on substituting and simplifying,
\begin{equation}
    \label{eq:alpha beta}
    \alpha= -C/4-A^2/48, \qquad \beta = A^3/864+B^2/64-AC/24.
\end{equation}
These can be verified by hand or, more easily, with the aid of a symbolic math package such as Maple.
See page 77 of Mordell~\cite{M} where this change of variables is described,
though with a slightly different normalization.
We will use this association to establish the relationship specified in the statement
of this lemma.

Note that, since we are in characteristic $>3$, all coefficients appearing in the
above two displays (for ex, $1/864$) are defined in $\F_q$.
Also, the change of variable~\eqref{eq:mordell}
can be inverted:
\begin{equation}
    \label{eq:mordell b}
    X = (y+x^2)/2 +A/12, \qquad Y = (xy+x^3+Ax/2+B/4)/2.
\end{equation}
The points $(x,y) \in \F_{q}\times \F_{q}$,
satisfying $y^2=x^4+Ax^2+Bx+c$ are in 1-1 correspondence with the
points $(X,Y) \in \F_{q}\times \F_{q}$ satisfying  $Y^2=X^3+\alpha X+\beta$,
with the exception of one point.

This exception arises from the denominator, $X+A/6$,
in~\eqref{eq:mordell}. If $X=-A/6$, then substituting into $Y^2=X^3+\alpha X+\beta$,
with $\alpha,\beta$ given by~\eqref{eq:alpha beta}, gives $Y^2=B^2/64$, i.e.
$Y=\pm B/8$. Thus, when $B\neq 0$, there are 2 points on  $Y^2=X^3+\alpha X+\beta$ with $X=-A/6$,
namely $(-A/6,\pm B/8)$. When $B=0$ there is just one point, $(-A/6,0)$.

In the former case, the point $(X,Y)=(-A/6,-B/8)$ does not have a corresponding
point $(x,y)\in\F_{q}\times \F_{q}$, but the point $(-A/6,B/8)$ does, namely
$(x,y)=( (A^2-4C)/(4B), (16C^2+8AB^2-8A^2C-A^4)/(16B^2))$, obtained by
substituting $X=-A/6$ into $y=-x^2+2X-A/6$, then substituting for $y$
into $y^2=x^4+Ax^2+Bx+C$ to get $x$, and finally back-substituting into $y=-x^2+2X-A/6$.

In the latter case, i.e. $B=0$, there is no point $(x,y)\in \F_{q}\times \F_{q}$
corresponding to $(X,Y)=(-A/6,0)$. For, if there was, we would have,
on substituting $y=-x^2+2X-A/6=-x^2-A/2$ into $y^2=x^4+Ax^2+C$, that $A^2/4=C$,
so that $y^2=x^4+Ax+A^2/4=(x^2+A/2)^2$, violating the assumption that $x^4+Ax+Bx+C$
is square free.

Thus, we have shown that $-a_q(X^3+\alpha X+\beta)=1-a_q(x^4+Ax^2+Bx+C)$ (in
terms of the point counting function, recalling~\eqref{eq:a_q}, this gives
$N_1(E_4)=N_1(E_3)$, though, below, we work just with $a_q$). This allows us to
relate $m_4(q;j)$ as expressed in~\eqref{eq:m_4} with $m_3(q;j)$ as expressed
in~\eqref{eq:m_3}.

By carefully examining our tables of zeta functions, we also determined that it
is important to pair curves according to their value of $\pm a_q(X^3+\alpha X+\beta)$.
Thus, fix $a$ to be any non-square in $\F_q$. Given $X^3+\alpha X+\beta \in \F_q[X]$, we
define its quadratic twist (depending on $a$), to be $X^3 +a^2 \alpha X +a^3 \beta$.
As explained in Section~\ref{sec:d=3}, we have $a_q(X^3+a^2 \alpha X+ a^3
\beta) = - a_q(X^3+\alpha X+\beta)$.

Now, we can count the number of curves $y^2=x^4+Ax^2+Bx+C$ that are associated
to a given $y^2=X^3+\alpha X+\beta$ as follows. For any choice of $A \in \F_q$,
there is exactly one choice of $C \in \F_q$ such that $-C/4-A^2/48=\alpha$.

For given $A$ and $C$, there are either 0, 1 or 2 choices of $B \in \F_q$ such that 
$\beta = A^3/864+B^2/64-AC/24$, i.e. such that $(B/8)^2 = \beta -A^3/864+AC/24$.
More precisely, the number of such $B$ is given by
\begin{equation}
    \label{eq:number B}
    1+\leg{\beta -A^3/864+AC/24}{\F_{q}}.
\end{equation}

Thus, the total number of of curves $y^2=x^4+Ax^2+Bx+C$ that are associated under the above
change of variable to a given $Y^2=X^3+\alpha X+\beta$ is equal to
\begin{equation}
    \label{eq:number ABC}
    \sum_{A,C\in \F_q \atop -C/4-A^2/48=\alpha} 1+ \leg{\beta -A^3/864+AC/24}{\F_{q}}.
\end{equation}
As already remarked, the above sum involves $q$ pairs $A,C\in \F_q$, since any choice of $A$ determines
$C$.

We will also need the number of $y^2=x^4+Ax^2+Bx+C$ that are associated to the twisted curve
$y^2=X^3 +a^2 \alpha X +a^3 \beta$:
\begin{equation}
    \label{eq:number ABC 2}
    \sum_{A,C\in \F_q \atop -C/4-A^2/48=a^2\alpha} 1+ \leg{a^3\beta -A^3/864+AC/24}{\F_{q}}.
\end{equation}
As $A,C$ run over the elements of $\F_q$, so do $a^2C$ and $aA$. Thus we can replace the condition
in the last summand by $-a^2C/4 -a^2A^2/48=a^2\alpha$, i.e. by the same condition as in~\eqref{eq:number ABC},
$-C/4-A^2/48=\alpha$. The above sum therefore equals
\begin{equation}
    \label{eq:number ABC 3}
    \sum_{A,C\in \F_q \atop -C/4-A^2/48=\alpha} 1+ \leg{a^3\beta -a^3A^3/864+a^3AC/24}{\F_{q}}
    =\sum_{A,C\in \F_q \atop -C/4-A^2/48=\alpha} 1- \leg{\beta -A^3/864+AC/24}{\F_{q}},
\end{equation}
the latter equality because $(a^3|\F_q) =-1$ since we have chosen $a$ to be a non-square in $\F_q$.

Summing \eqref{eq:number ABC} and \eqref{eq:number ABC 3}, the number of curves
$y^2=x^4+Ax^2+Bx+C$ associated to either $Y^2=X^3+\alpha X+\beta$
or to $y^2=X^3 +a^2 \alpha X +a^3 \beta$ is given by
\begin{equation}
    \label{eq:ABC total}
    2 \sum_{A,C\in \F_q \atop -C/4-A^2/48=\alpha} 1 = 2q.
\end{equation}
Thus $2q$ curves in $\tilde{\mathcal H}_{q,4}$ are associated to each pair of curves
$Y^2=X^3+\alpha X+\beta$,  $y^2=X^3 +a^2 \alpha X +a^3 \beta$ in $\tilde{\mathcal H}_{q,3}$,
and all such curves have the same value of $|1-a_q(x^4+Ax^2+Bx+C)|$.

Special care is needed in the event that
$Y^2=X^3+\alpha X+\beta$ twists to itself, i.e. $ a^2 \alpha = \alpha$ and $a^3 \beta = \beta$.
But, in that case, $a_q(X^3+\alpha X+\beta) = - a_q(X^3+\alpha X+\beta)$, and thus equals 0, hence
such polynomials contribute 0 to $m_3(q;j)$, and their associated curves $y^2=x^4+Ax^2+Bx+C$
contribute 0 to $m_4(q;j)$, so we may ignore these.

Thus, the number of curves from $\tilde{\mathcal H}_{q,3}$ with 
given $\pm a_q$ are in $1:q$ proportion with the number of curves from $\tilde{\mathcal H}_{q,4}$
with the same $L$-functions. When $j$ is even, each term in $m_3$ and $m_4$
appear with an even exponent, and all terms summed are positive. Hence
\begin{equation}
    m_4(q;j) = qm_3(q;j),
\end{equation}

When $j$ is odd, then
\begin{equation}
    \label{eq:m_4 j odd}
    2 m_4(q;j) = 2 q \sum_{\alpha, \beta \in \F_q \atop X^3+\alpha X + \beta \text{square-free}}
    (-a_q(X^3+\alpha X+\beta))^j
    \sum_{A,C\in \F_q \atop -C/4-A^2/48=\alpha} \leg{\beta -A^3/864+AC/24}{\F_{q}}.
\end{equation}
Here, we are considering the contribution to $m_4$ from each particular value of $a_q(X^3+\alpha X+\beta)$.
The factor of $q$ outside the sums is to account for the fact that $m_4$ is a sum over
$\mathcal H_{q,4}$ rather than $\tilde{\mathcal H}_{q,4}$.
We run over all square free $X^3+\alpha X +\beta \in \F_q[X]$, and also their
twists $X^3+a^2 \alpha X +a^3 \beta$ (where, as before, $a$ is any
fixed non-square in $\F_q$), that give rise to that particular value of $\pm
a_q$. For any such pair of curves in $\tilde{\mathcal H}_{q,3}$,
we count how many $y^2=X^4+Ax^2+Bx+C$ are associated to them using~\eqref{eq:number ABC}
and~\eqref{eq:number ABC 3}.
Because $j$ is odd, $a_q^j=-(-a_q)^j$, thus resulting in~\eqref{eq:m_4 j odd} when the two are
combined. The impact of running over curves and their twists
(with $a_q \neq 0$) is to count each twice, hence the extra factors of $2$ in front of
both sides of~\eqref{eq:m_4 j odd}.

Now, the inner sum equals $- a_q(X^3+\alpha X+\beta)$, as one can check by
substituting $t=-A/6$, which runs over $\F_q$ as $A$ does, and $-C/4=\alpha+ A^2/48 =\alpha+3t^2/4$
into the summand.
Thus, the inner sum in~\eqref{eq:m_4 j odd} equals
\begin{equation}
    \label{eq:a_q miracle}
    \sum_{X \in \F_q} \leg{X^3+\alpha X+\beta}{\F_{q}} = -a_q(X^3+\alpha X+\beta).
\end{equation}
Simplifying thus gives, when $j$ is odd,
\begin{equation}
    \label{eq:m_4 j odd b}
    m_4(q;j) = q \sum_{\alpha, \beta \in \F_q \atop X^3+\alpha X + \beta \text{square-free}}
    (-a_q(X^3+\alpha X+\beta))^{j+1},
\end{equation}
which, by definition, equals  $m_3(q;j+1)$.

\end{proof}

\begin{lemma}
\label{lemma:binomial}
\begin{eqnarray}
    \label{eq:binomial 2}
    \sum_{\nu=2l, \text{even}}^{k+1} {k \choose \nu-1}
    \frac{\nu!2^{k-\nu+1}}{(\nu/2-l)!(\nu/2+l+1)!}
    =
    \frac{4(k^2+k+4l^2+4l)\Gamma(2k+2)}{\Gamma(k+2l+4)\Gamma(k-2l+2)}.
\end{eqnarray}
If $l=0$, we take the $v=0$ term to equal 0.
\end{lemma}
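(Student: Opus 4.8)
The plan is to avoid re-running the hypergeometric computation and instead reduce the identity directly to the summation formula \eqref{eq:inner sum b} (in its simplified binomial form $\tfrac{2}{k+2l+2}\binom{2k+1}{k-2l}$). First I would substitute $\nu=2m$, so that the left-hand side becomes $2\sum_{m} \binom{k}{2m-1}\,c_m$, where $c_m:=\frac{(2m)!\,2^{k-2m}}{(m-l)!\,(m+l+1)!}$ is precisely the summand occurring in \eqref{eq:inner sum b} (there paired with $\binom{k}{2m}$ rather than $\binom{k}{2m-1}$). The only real obstruction to quoting \eqref{eq:inner sum b} verbatim is the ``off-by-one'' binomial coefficient $\binom{k}{2m-1}$, whose lower index is odd. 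The key observation is that Pascal's rule gives $\binom{k}{2m-1}=\binom{k+1}{2m}-\binom{k}{2m}$, which splits the sum into two pieces, each now matching \eqref{eq:inner sum b}.

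Next I would evaluate the two pieces. The term $2\sum_m \binom{k}{2m}\,c_m$ equals $\frac{4}{k+2l+2}\binom{2k+1}{k-2l}$ by \eqref{eq:inner sum b}. For $2\sum_m \binom{k+1}{2m}\,c_m$ I would invoke \eqref{eq:inner sum b} with $k$ replaced by $k+1$, being careful that its summand then carries $2^{(k+1)-2m}=2\,c_m$, i.e. an extra factor of $2$ relative to $c_m$; this gives $2\sum_m\binom{k+1}{2m}\,c_m=\frac{2}{k+2l+3}\binom{2k+3}{k+1-2l}$. The ranges of summation match in both applications, since $\nu$ (and hence $j=2m$) runs over the even integers up to $k+1$. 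Subtracting, the full sum equals $\frac{2}{k+2l+3}\binom{2k+3}{k+1-2l}-\frac{4}{k+2l+2}\binom{2k+1}{k-2l}$.

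The final step is to put both terms over the common denominator $\Gamma(k+2l+4)\Gamma(k-2l+2)=(k+2l+3)!\,(k+1-2l)!$. Writing $\binom{2k+3}{k+1-2l}=\frac{(2k+3)!}{(k+1-2l)!\,(k+2l+2)!}$ and $\binom{2k+1}{k-2l}=\frac{(2k+1)!}{(k-2l)!\,(k+2l+1)!}$, pulling out $(2k+1)!$, and using $(2k+3)!=(2k+3)(2k+2)(2k+1)!$ together with $(k+1-2l)!=(k+1-2l)(k-2l)!$ and $(k+2l+3)!=(k+2l+3)(k+2l+2)!$, the whole expression becomes $\frac{(2k+1)!}{(k+1-2l)!\,(k+2l+3)!}\bigl[2(2k+3)(2k+2)-4(k+1-2l)(k+2l+3)\bigr]$. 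Expanding the bracket, $2(2k+3)(2k+2)=8k^2+20k+12$ and $4(k+1-2l)(k+2l+3)=4k^2+16k-16l-16l^2+12$, so the difference collapses to $4(k^2+k+4l^2+4l)$, which is exactly the numerator of the claimed right-hand side.

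The main obstacle is entirely this last collapse: one must track the extra factor of $2$ and the one-step index shifts produced by the $k\mapsto k+1$ substitution, and then confirm that the two binomials combine into the stated quadratic $k^2+k+4l^2+4l$. The boundary case $l=0$ causes no trouble, since the $\nu=0$ term is declared to vanish (consistent with $\binom{k}{-1}=0$) and \eqref{eq:inner sum b} degenerates to \eqref{eq:binomial 1}. Should one prefer to avoid Pascal's rule, an alternative is to imitate the derivation of \eqref{eq:inner sum b} directly: write the weight as $2m=2(m-l)+2l$, express each resulting sum as a terminating ${}_2F_1$ at $1$ (the $2(m-l)$ piece yielding a contiguous ${}_2F_1(a+1,b+1;c+1;1)$ via $n\,\tfrac{(a)_n(b)_n}{(c)_n\,n!}=\tfrac{ab}{c}\tfrac{(a+1)_n(b+1)_n}{(c+1)_n\,n!}$), evaluate both by Gauss's identity \eqref{eq:gauss identity}, and simplify with the Legendre duplication formula; this route is longer but reaches the same answer.
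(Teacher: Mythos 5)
Your proof is correct, and it takes a genuinely different route from the paper's. The paper proves Lemma~\ref{lemma:binomial} by redoing the hypergeometric computation from scratch: it writes the sum (up to an explicit prefactor) as $z\,\tfrac{d}{dz}\bigl(z^l\,{}_2F_1(l-k/2,l-k/2-1/2;2l+2;z)\bigr)$ evaluated at $z=1$, uses the contiguous-derivative formula for ${}_2F_1$, applies Gauss's identity \eqref{eq:gauss identity} to both resulting terms, and finishes with the Legendre duplication formula. You instead reuse the already-established closed form \eqref{eq:inner sum b}: after substituting $\nu=2m$, Pascal's rule $\binom{k}{2m-1}=\binom{k+1}{2m}-\binom{k}{2m}$ splits the sum into the instance of \eqref{eq:inner sum b} at $k$ and (after tracking the extra factor of $2$ in $2^{(k+1)-2m}$) the instance at $k+1$, and the difference $\frac{2}{k+2l+3}\binom{2k+3}{k+1-2l}-\frac{4}{k+2l+2}\binom{2k+1}{k-2l}$ collapses by elementary algebra to the stated right-hand side; I have checked the bracket computation $2(2k+3)(2k+2)-4(k+1-2l)(k+2l+3)=4(k^2+k+4l^2+4l)$ and several numerical instances, and they are right. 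Your route is more elementary and avoids duplicating the ${}_2F_1$/duplication-formula machinery, at the price of needing \eqref{eq:inner sum b} to be valid with $k$ replaced by $k+1$ for the same $l$ (it is: the derivation via Gauss goes through whenever $2l\le k+1$, and the formula holds vacuously as $0=0$ when $2l=k+1$). One small imprecision: the ranges do not literally ``match in both applications'' --- when $k$ is odd the $\binom{k}{2m}$ piece acquires a top term at $2m=k+1>k$ not present in \eqref{eq:inner sum b} --- but that term carries $\binom{k}{k+1}=0$, so the discrepancy is harmless and worth a sentence rather than a claim that the ranges coincide.
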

\begin{proof}
The sum in the lemma can be expressed as
\begin{equation}
    \label{eq:lemma step 1}
    \frac{2^{k+1-2l}}{(2l+1)l} {k \choose 2l-1} \sum_{n=0}^\infty \frac{(l-k/2)_n (l-k/2-1/2)_n}{(2l+2)_n}
    \frac{(l+n)z^n}{n!}.
\end{equation}
evaluated at $z=1$. Here $(a)_n = \Gamma(a+n)/\Gamma(a) = a (a+1) \ldots (a+n-1)$ (taken to be 1 if $n=0$).
Other than the factor $l+n$,
the sum over $n$ is ${}_2F_1(l-k/2,l-k/2-1/2;2l+2;z)$. The sum can be obtained by multiplying
${}_2F_1$ by $z^l$, differentiating with respect to $z$, and then multiplying by $z$.
Using,
\begin{equation}
    \label{eq:derivative 2F1}
    \frac{d}{dz} {}_2F_1(a,b;c;z) = \frac{ab}{c} {}_2F_1(a+1,b+1;c+1;z)
\end{equation}
we can thus express the sum over $n$ in \eqref{eq:lemma step 1} as
\begin{equation}
    \label{eq:derivative 2F1 b}
    z \frac{z}{dz} z^l {}_2F_1(a,b;c;z) = lz^l {}_2F_1(a,b;c;z) + z^{l+1} \frac{ab}{c} {}_2F_1(a+1,b+1;c+1;z),
\end{equation}
with $a=l-k/2$,$b=l-k/2-1/2$,$c=2l+2$. Substituting $z=1$, and applying \eqref{eq:gauss identity}, we get
\begin{eqnarray}
    &&l \frac{\Gamma(c)\Gamma(c-a-b)}{\Gamma(c-b)\Gamma(c-b)} +
    \frac{ab}{c}\frac{\Gamma(c+1)\Gamma(c-a-b-1)}{\Gamma(c-b)\Gamma(c-b)}
    =
    l \frac{\Gamma(c)\Gamma(c-a-b)}{\Gamma(c-b)\Gamma(c-b)}
    \left(1+\frac{ab}{l}\frac{\Gamma(c-a-b-1)}{\Gamma(c-a-b)}\right) \notag \\
    &&=
    \frac{\Gamma(2l+2)\Gamma(k+3/2)}{\Gamma(l+k/2+2)\Gamma(l+k/2+5/2)}
    \frac{(k^2+k+4l^2+4l)}{4}.
\end{eqnarray}
(we also used $\Gamma(k+5/2)=(k+3/2)\Gamma(k+3/2)$ in simplifying).
Substituting the right side into~\eqref{eq:lemma step 1}, simplifying, and using the Legendre duplication formula gives
\eqref{eq:binomial 2}.

\end{proof}

\section{Formulas suggested by our data, $d \geq 5$}
\label{sec:guessing}

We list here the formulas that one gets, experimentally, from interpolating (or guessing!)
when possible, from our data.

When we did not have enough data to
interpolate, we combined leading terms as derived from the
Andrade-Keating conjecture with interpolation for the lower coefficients
(also exploiting, via the Chinese remainder theorem, the observation that the
coefficients seem to be integers). We left ourselves some leeway so that we
could check our guess against at least one additional data point. We give the
resulting formulas, for $d=5$, in Table \ref{table:d=5}.
\begin{table}[H]
\begin{tabular}{c|l}
$k$ & $(q^5-q^4)^{-1} \sum_{D(x)\in\mathcal{H}_{q,5}}L(1/2,\chi_D)^k$ \\ \hline
1& $ 3-{q}^{-1}+{q}^{-2}-q^{-3}$ \\
2& $ 14-11\,q^{-1}+10\,{q}^{-2}+5\,{q}^{-3}-15\,q^{-4}-q^{-5}$ \\
3& $ 84-111\,{q}^{-1}+91\,{q}^{-2}+98\,{q}^{-3}-174\,{q}^{-4}-51\,q^{-5}-q^{-6}$ \\
4& $594-1133\,{q}^{-1}+861\,{q}^{-2}+1476\,{q}^{-3}-1959\,{q}^{-4}-1192\,{q}^{-5}$ \\
 & $-90\,q^{-6}-q^{-7}$ \\
5& $4719-11869\,{q}^{-1}+8645\,{q}^{-2}+20416\,{q}^{-3}-22055\,{q}^{-4}$\\
 & $-21516\,{q}^{-5}-3398\,{q}^{-6}-145\,q^{-7}-q^{-8}$
\end{tabular}
\caption{Moment formulas for $d=5$, $k\leq 5$.}
\label{table:d=5}
\end{table}
These formulas appear to hold for all prime powers $q$.
For $k>5$ and $d=5$, presumably some extra arithmetic quantities enter, as they do for $k>9$ when
$d=3$. In the case of $d=5$, the approach of Diaconu and Pasol~\cite{DP} does appear to produce, with proof,
a somewhat complicated formula for the moments involving traces of Hecke operators acting on Siegel cusp
forms for certain congruence subgroups of $\text{Sp}_4(\Z)$. We have not attempted to put their
formula in more concrete form.
It would be a worthwhile project to do so, to provably produce and extend the
above table of moment polynomials for $d=5$, and to better understand the
contribution from the Hecke terms, presumably starting, when $d=5$, at $k=6$.
We believe the Hecke terms enter at $k=6$ (when $d=5$) because we were not able
to interpolate any polynomials in $1/q$ for $k=6$ in spite of having the
moments for all $q\leq 53$ (19 data points).

The leading coefficients, $3,14,84,594,4719,\ldots$, are given by the Keating Snaith formula,
with $g=2$ (so that $d=2g+1=5$). Interestingly, these leading coefficients also appear
in the work of Kedlaya and Sutherland~\cite{KedS} (see their Table 4) as
moments of traces in $USp(2g)$, for $g=2$, and similarly for $g=1$
and the leading coefficients of \ref{table:d=3}.
This does not persist for $g>2$.

We display in Tables \ref{table:d=6} to \ref{table:d=9} moment formulas guessed at from our data,
for $6 \leq d \leq 9$.
\begin{table}[H]
\begin{tabular}{c|l}
$k$ & $(q^6-q^5)^{-1}\sum_{D(x)\in\mathcal{H}_{q,6}}L(1/2,\chi_D)^k$ \\ \hline
1&  $3-q^{-1/2}-2\,q^{-1}+q^{-2}-q^{-5/2}-q^{-3}$\\
& $+q^{-7/2}-q^{-4}-q^{-9/2}+2\,q^{-5}$ \\
2& $14-12\,{q}^{-1/2}-19\,{q}^{-1}+14\,{q}^{-3/2}+17\,{q}^{-2}-24\,{q}^{-5/2}+24\,{q}^{-7/2}$\\
 & $-33\,{q}^{-4}+14\,{q}^{-9/2}+30\,{q}^{-5}-34\,{q}^{-11/2}+14\,{q}^{-6}-6\,{q}^{-13/2}+{q}^{-7}$
\end{tabular}
\caption{Moment formulas for $d=6$, $k\leq 2$.}
\label{table:d=6}
\end{table}
\begin{table}[H]
\begin{tabular}{c|l}
$k$ & $(q^7-q^6)^{-1}\sum_{D(x)\in\mathcal{H}_{q,7}}L(1/2,\chi_D)^k$ \\ \hline
1& $4-2\,{q}^{-1}+2\,{q}^{-2}-2\,{q}^{-3}+2\,{q}^{-4}+2\,q^{-5}-2\,q^{-6}$ \\
2& $30-40\,q{-1}+60\,q^{-2}-66\,q^{-3}+20\,q^{-4}+101\,q^{-5}$\\
 & $-85\,q^{-6}-36\,q^{-7}-2\,q^{-8}$ \\
3& $330-832\,q^{-1}+1674\,q^{-2}-1986\,q^{-3}-240\,q^{-4}$\\
 & $+4348\,q^{-5}-2330\,q^{-6}-3222\,q^{-7}-626\,q^{-8}-12\,q^{-9}$ \\
\end{tabular}
\caption{Moment formulas for $d=7$, $k\leq 3$.}
\label{table:d=7}
\end{table}
\begin{table}[H]
\begin{tabular}{c|l}
$k$ & $(q^8-q^7)^{-1}\sum_{D(x)\in\mathcal{H}_{q,8}}L(1/2,\chi_D)^k$ \\ \hline
1& $4-q^{-1/2}-3q^{-1}+2q^{-2}-q^{-5/2}-3q^{-3}+q^{-7/2}+3q^{-4}-3q^{-9/2}$ \\
& $-q^{-5}+3q^{-11/2}-3q^{-6}-q^{-13/2}+5q^{-7}-2q^{-15/2}$
\end{tabular}
\caption{Moment formulas for $d=8$, $k=1$.}
\label{table:d=8}
\end{table}
\begin{table}[H]
\begin{tabular}{c|l}
$k$ & $(q^9-q^8)^{-1}\sum_{D(x)\in\mathcal{H}_{q,9}}L(1/2,\chi_D)^k$ \\ \hline
1& $5-3{q}^{-1}+3{q}^{-2}-4{q}^{-3}+6{q}^{-4}-5{q}^{-5}+{q}^{-6}+5{q}^{-7}-7q^{-8}-q^{-9}$ 
\end{tabular}
\caption{Moment formula for $d=9$, $k=1$.}
\label{table:d=9}
\end{table}

\section{Series expansions for $Q_k(q;d)$, $k=1$}
\label{sec:series}

When $d$ is odd,
\begin{equation}
   Q_{1}(q;d)=\frac{1}{2}P(1)\left(d+1+4\sum_{\substack{P \ \mathrm{monic} \\
   \mathrm{irreducible}}}\frac{\mathrm{deg}(P)}{|P|(|P|+1)-1}\right).
\end{equation}

When $d$ is even,
\begin{equation}
    Q_{1}(q;d)=\frac{1}{2}P(1)\left(d-2/(q^{1/2}-1)+
    4\sum_{\substack{P \ \mathrm{monic} \\ \mathrm{irreducible}}}\frac{\mathrm{deg}(P)}{|P|(|P|+1)-1}\right).
\end{equation}

Grouping $P$'s together according to their degree, and using formula~\eqref{eq:i_n} for
the number of irreducible polynomials of given degree, we have, on expanding the
above formulas in powers of $1/q$ or $1/q^{1/2}$, that, for $d=2g+1$ odd, $k=1$:
\begin{multline}
    \label{eq:series k=1 odd d}
    Q_1(q;2g+1)=
    g+1-{\frac {g-1}{q}}+{\frac {g-1}{{q}^{2}}}-{\frac {2\,g-4}{{q}^{3}}}+{\frac {4\,g-10}{{q}^{4}}}-{
    \frac {7\,g-23}{{q}^{5}}}+{\frac {11\,g-43}{{q}^{6}}}-{\frac {18\,g-82}{{q}^{7}}}\\
    +{\frac {32\,g-164}{{q}^{8}}}-{\frac {55\,g-317}{{q}^{9}}}+{\frac
    {89\,g-569}{{q}^{10}}}-{\frac {147\,g-1029}{{q}^{11}}}+{\frac
    {251\,g-1905}{{q}^{12}}}-{\frac {421\,g-3451}{{q}^{13}}} \\
    +{\frac {693\,g-6099}{{q}^{14}}}
    -{\frac {1149\,g-10795}{{q}^{15}}}+{\frac {1919\,g-19163}{{q}^{16}}}-{\frac {3190\,g-33748}{{
    q}^{17}}}+{\frac {5271\,g-58885}{{q}^{18}}} \\
    -{\frac {8712\,g-102452}{{q}^{19}}}+{\frac {14436\,g-178220}{{q}^{20}}} +\ldots
\end{multline}
and for $d=2g+2$ even, $k=1$:
\begin{multline}
    \label{eq:series k=1 even d}
    Q_1(q;2g+2)=
    g+1-{q^{-1/2}}-{\frac {g}{q}}+{\frac {g-1}{{q}^{2}}}-{q}^{-5/2}-{\frac {2\,g-3}{{q}^{3}}}+
    {q}^{-7/2}+{\frac {4\,g-9}{{q}^{4}}}-3\,{q}^{-9/2}-{\frac {7\,g-20}{{q}^{5}}}+4\,{q}^{-11/2} \\
    +{\frac {11\,g-39}{{q}^{6}}}-7\,{q}^{-13/2}-{\frac {18\,g-75}{{q}^{7}}}+11\,{q}^{-15/2}+{\frac {32\,g
    -153}{{q}^{8}}}-21\,{q}^{-17/2}-{\frac {55\,g-296}{{q}^{9}}}+34\,{q}^{-19/2}
    +{\frac {89\,g-535}{{q} ^{10}}} \\
    -55\,{q}^{-21/2}-{\frac {147\,g-974}{{q}^{11}}}+92\,{q}^{-23/2}
    +{\frac {251\,g-1813}{{q}^{12}}}-159\,{q}^{-25/2}-{\frac {421\,g-3292}{{q}^{13}}}+262\,{q}^{-27/2}
    +{\frac{693\,g-5837}{{q}^{14}}} \\
    -431\,{q}^{-29/2}-{\frac {1149\,g-10364}{{q}^{15}}}+718\,{q}^{-
    31/2}+{\frac {1919\,g-18445}{{q}^{16}}}-1201\,{q}^{-33/2}-{\frac {3190\,g-
    32547}{{q}^{17}}} \\
    +1989\,{q}^{-35/2}+{\frac {5271\,g-56896}{{q}^{18}}}-3282\,{q}^{-
    37/2}-{\frac {8712\,g-99170}{{q}^{19}}}+5430\,{q}^{-39/2}+{\frac {14436\,g-172790}{{
    q}^{20}}} +\ldots
\end{multline}
Substituting $d=1,2,3,\ldots,9$ into the above formulas gives:

\begin{table}[H]
\begin{tabular}{c|c}
$d$ & $Q_1(q;d)$ \\ \hline
1& $1+O(q^{-1})$ \\
2& $1-1/q^{1/2}+O(q^{-2})$\\
3& $2+O(q^{-3})$ \\
4& $2-1/q^{1/2}-1/q-1/q^{5/2}+1/q^3+O(q^{-7/2})$ \\
5& $3-1/q+1/q^2+O(q^{-4})$ \\
6& $3-1/q^{1/2}-2/q+1/q^2-1/q^{5/2}-1/q^3+1/q^{7/2}-1/q^4+O(q^{-9/2})$ \\
7& $4-2/q+2/q^2-2/q^3+2/q^4+2/q^5+O(q^{-6})$ \\
8& $4-1/q^{1/2}-3/q+2/q^2-1/q^{5/2}-3/q^3+1/q^{7/2}+3/q^4-3/q^{9/2}-1/q^5+O(q^{-11/2})$ \\
9& $5-3/q+3/q^2-4/q^3+6/q^4-5/q^5+1/q^6+O(q^{-7})$ \\
\end{tabular}
\caption{Expansion of $Q_1(q;d)$ in the $q$-aspect, for $d\leq 9$.}
\label{table:comparison k=1}
\end{table}
Here, we are displaying the terms that match with the actual moments from the previous sections.

\subsection{Series expansions for $Q_k(q;d)$ when $k=2,3$}

We can work out expansions, analogous to \eqref{eq:series k=1 odd d} and~\eqref{eq:series k=1 even d}
for additional values of $k$, and do so here for $k=2,3$.

We make use of the methods of~\cite{GHRR} to express the coefficients of the polynomials
$Q_k(q;d)$ more explicitly.
To apply the formulas of~\cite{GHRR} we first write $Q_k(q;d)$ as a polynomial in $2g$ rather than
in $d$. For given $q$ and $k$ let
\begin{equation}
    \label{eq:P with coeffs}
    Q_k(q;d) =
    \sum_{r=0}^{k(k+1)/2} c_r(q;k) (2g)^{k(k+1)/2-r}.
\end{equation}
Note that this actually defines two different polynomials, depending on whether $d=2g+1$
or $d=2g+2$, so that $c_r(q;k)$ also depends (for $r>0$) on the parity of $d$. To avoid
clutter, we suppress this dependence in our notation.

Define
\begin{eqnarray}
    \label{eq:a_k}
    &&a_k := A(0,\ldots,0)
    =\prod_{\substack{P \ \mathrm{monic} \\ \mathrm{irreducible}}}
    \frac{\left(1 - |P|^{-1}\right)^{\frac{k(k+1)}{2}}}
    {1 + |P|^{-1}}
    \left(
           \frac12 \left( 1 - |P|^{-1/2}\right)^{-k}
           +
           \frac12 \left(1 + |P|^{-1/2}\right)^{-k}
           + |P|^{-1}
    \right) \notag \\
    &&= \prod_{n=1}^\infty
    \Bigg(
    \frac{\left(1-q^{-n}\right)^{\frac{k(k+1)}{2}}}
    {\left( 1+ q^{-n}\right)^{-1}}
    \bigg(\frac12 \left( 1-q^{-n/2}\right)^{-k}
    +\frac12 \left(1+q^{-n/2}\right)^{-k}
    +q^{-n} \bigg)
    \Bigg)^{i_n(q)}. \notag \\
\end{eqnarray}
In the last equality we are simply grouping together factors according to the value
of $|P|=q^n$.

The {\em length} of the partition $\lambda$ is defined to be the number of non zero
$\lambda_i$s. We denote it by $l(\lambda)$.
Given $\alpha=(\alpha_1,\dotsc,\alpha_n)$, we write $u^\alpha$ to
denote $u_1^{\alpha_1}\cdots u_n^{\alpha_n}$. Let
$\lambda$ be a partition of length less than or equal to $n$.
If $n\geq l(\lambda)$, then
\begin{equation}
  \label{eq:i41}
  m_\lambda(u_1,\dotsc,u_n)= \sum_\alpha u^\alpha,
\end{equation}
where the $\alpha$ ranges  over distinct permutations of
$(\lambda_1,\dotsc,\lambda_n)$. If $l(\lambda)>n$, then
$m_\lambda(u_1,\dotsc,u_n)=0$. For the only partition of $0$, the empty
partition, we define $m_0=1$. Thus, for example,
$m_{[2,1]}(u_1,u_2,u_3)= u_1^2u_2 + u_1^2u_3 + u_1u_2^2 +u_1u_3^2 +u_2^2u_3 +u_2u_3^2.$

Let
\begin{equation}
    \label{eq:b_lambda}
  \sum_{i=0}^\infty \sum_{|\lambda|=i} b_\lambda(k) m_\lambda(u)
\end{equation}
be the power series expansion of
\begin{equation}
  \label{eq:b_lambda 2}
  \frac{1}{a_k}   H(u_1,\ldots,u_k)
    \prod_{1\leq i \leq j \leq k} (u_i+u_j),
\end{equation}
where $H$ is defined in~\eqref{eq:H}.
The double product above plays the role of cancelling the poles of
$\prod_{1\leq i \leq j \leq k} (1-e^{-u_{i}-u_{j}})^{-1}$ in~\eqref{eq:H}.

In \eqref{eq:b_lambda}, the sum is over all partitions
$\lambda_1+\ldots+\lambda_k=i$, with
$\lambda_1\geq \lambda_2\geq \ldots \lambda_k \geq 0$.
We divide the expression by $a_k$ to ensure that the constant term in
the power series is $1$.

Then
\begin{equation}
  \label{eq:c_0}
  c_0(k;q) =
  a_k \prod_{j=1}^k \frac{j!}{(2j)!},
\end{equation}
and, for $r \geq 1$,
\begin{equation}
    \label{eq:c_r}
    c_r(q;k) = c_0(q;k) \sum_{|\lambda|=r} b_\lambda(k ) N_\lambda(k),
\end{equation}
where
$N_\lambda(k)$ is defined by
\begin{multline}
    \label{eq:defn N}
    N_\lambda(k) \prod_{j=1}^k \frac{j!}{(2j)!}
    := \frac{(-1)^{k(k-1)/2}2^{k(k+1)/2-|\lambda|}}{k!(2\pi i)^k}
    \oint \cdots\oint m_\lambda
    (z_1,\dotsc, u_k) \\ \times \frac{\Delta(u_1,\dotsc,u_k)\Delta(u_1^2,\dotsc,u_k^2)}
    {\prod_{j=1}^k u_j^{2k}}  \exp{\left(\sum_{j=1}^k u_j\right)}\,
    du_1\dots du_k.
\end{multline}
The above is obtained by substituting~\eqref{eq:b_lambda} into~\eqref{eq:Q cleaner},
changing variables, $u_j = (2g) z_j/2$, and
taking care to borrow $\prod_{1\leq i \leq j \leq k} (u_i+u_j)$ from
$\Delta(u_1^2,\ldots,u_{k}^2)^2$, thus producing the
factor displayed, $\Delta(u_1,\dotsc,u_k)\Delta(u_1^2,\dotsc,u_k^2)$.

In~\cite{GHRR} we obtained several formulas for $N_\lambda(k)$ and also proved
that it is a polynomial in $k$ of degree at most $2|\lambda|$ (which is the
reason why we pull out the factor $\prod_{j=1}^k j!/(2j)!$. To exploit
the formulas obtained in that paper, we also regard $Q_k$ as a polynomial
in $2g$ rather than in $g$.
We give a table, quoted from \cite{GHRR}, of $N_\lambda(k)$ below.

In order to compute the multivariate Taylor expansion of~\eqref{eq:b_lambda 2},
i.e. the coefficients $b_\lambda(k)$,
we consider the series expansion of its logarithm, since it is easier to deal
with a sum than a product.
Let
\begin{equation}
    \label{eq:log H}
    \sum_{r=1}^\infty \sum_{|\lambda| = r} B_\lambda(k) m_\lambda(u).
\end{equation}
be the power series expansion of the logarithm of~\eqref{eq:b_lambda 2}.
We start the sum at $r=1$ because the division by $a_k$ makes the constant term 0.
Now, the lhs is symmetric in the $u_i$'s, and we can find $B_\lambda(k)$ by
applying
\begin{equation}
    \label{eq:diff}
    \frac{1}{\lambda_1! \lambda_2! \ldots \lambda_l!}
    \frac{\partial^{\lambda_1}}{\partial u_1^{\lambda_1}}
    \frac{\partial^{\lambda_2}}{\partial u_2^{\lambda_2}}
    \ldots
    \frac{\partial^{\lambda_l}}{\partial u_l^{\lambda_l}},
\end{equation}
where $l=l(\lambda)$,
and setting $u_1 = \ldots = u_k = 0$. Since the partial derivatives do
not involve $u_{l+1},\ldots,u_k$ we can set these to 0 before the differentiation.
\begin{table}[H]
\centerline{
\begin{tabular}{|c|c|c|}
\hline
$\lambda$ & $N_\lambda(k)/r_\lambda(k)$ & $r_\lambda(k)$ \\
\hline
$[1]$ & $k+1$ & $(k)_1$ \\ \hline
$[1, 1]$ & $(k+2)(k+1)$ & $(k)_2/2$ \\ \hline
$[2]$ & $0$ & $(k)_1$ \\ \hline
$[1, 1, 1]$ & $(k+3)(k+2)(k+1)$ & $(k)_3/6$ \\ \hline
$[2, 1]$ & $(k+2)(k+1)$ & $(k)_2$ \\ \hline
$[3]$ & $-(k-1)(k+2)(k+1)$ & $(k)_1$ \\ \hline
$[1, 1, 1, 1]$ & $(k+4)(k+3)(k+2)(k+1)$ & $(k)_4/24$ \\ \hline
$[2, 1, 1]$ & $2(k+3)(k+2)(k+1)$ & $(k)_3/2$ \\ \hline
$[2, 2]$ & $0$ & $(k)_2/2$ \\ \hline
$[3, 1]$ & $-(k-2)(k+3)(k+2)(k+1)$ & $(k)_2$ \\ \hline
$[4]$ & $0$ & $(k)_1$ \\ \hline
$[1, 1, 1, 1, 1]$ & $(k+5)(k+4)(k+3)(k+2)(k+1)$ & $(k)_5/120$ \\ \hline
$[2, 1, 1, 1]$ & $3(k+4)(k+3)(k+2)(k+1)$ & $(k)_4/6$ \\ \hline
$[2, 2, 1]$ & $4(k+3)(k+2)(k+1)$ & $(k)_3/2$ \\ \hline
$[3, 1, 1]$ & $-(k-3)(k+4)(k+3)(k+2)(k+1)$ & $(k)_3/2$ \\ \hline
$[3, 2]$ & $-2(k-2)(k+3)(k+2)(k+1)$ & $(k)_2$ \\ \hline
$[4, 1]$ & $-2(k-2)(k+3)(k+2)(k+1)$ & $(k)_2$ \\ \hline
$[5]$ & $2(k-1)(k-2)(k+3)(k+2)(k+1)$ & $(k)_1$ \\ \hline
$[1, 1, 1, 1, 1, 1]$ & $(k+6)(k+5)(k+4)(k+3)(k+2)(k+1)$ & $(k)_6/720$ \\ \hline
$[2, 1, 1, 1, 1]$ & $4(k+5)(k+4)(k+3)(k+2)(k+1)$ & $(k)_5/24$ \\ \hline
$[2, 2, 1, 1]$ & $10(k+4)(k+3)(k+2)(k+1)$ & $(k)_4/4$ \\ \hline
$[2, 2, 2]$ & $0$ & $(k)_3/6$ \\ \hline
$[3, 1, 1, 1]$ & $-(k-4)(k+5)(k+4)(k+3)(k+2)(k+1)$ & $(k)_4/6$ \\ \hline
$[3, 2, 1]$ & $-(k+3)(k+2)(k+1)(3k^2+3k-40)$ & $(k)_3$ \\ \hline
$[3, 3]$ & $(k-2)(k-4)(k+5)(k+3)(k+2)(k+1)$ & $(k)_2/2$ \\ \hline
$[4, 1, 1]$ & $-4(k+3)(k+2)(k+1)(k^2+k-10)$ & $(k)_3/2$ \\ \hline
$[4, 2]$ & $0$ & $(k)_2$ \\ \hline
$[5, 1]$ & $2(k-2)(k+3)(k+2)(k+1)(k^2+k-10)$ & $(k)_2$ \\ \hline
$[6]$ & $0$ & $(k)_1$ \\ \hline
%$[1, 1, 1, 1, 1, 1, 1]$ & $(k+7)(k+6)(k+5)(k+4)(k+3)(k+2)(k+1)$ & $(k)_7/5040$ \\ \hline
%$[2, 1, 1, 1, 1, 1]$ & $5(k+6)(k+5)(k+4)(k+3)(k+2)(k+1)$ & $(k)_6/120$ \\ \hline
%$[2, 2, 1, 1, 1]$ & $18(k+5)(k+4)(k+3)(k+2)(k+1)$ & $(k)_5/12$ \\ \hline
%$[2, 2, 2, 1]$ & $30(k+4)(k+3)(k+2)(k+1)$ & $(k)_4/6$ \\ \hline
%$[3, 1, 1, 1, 1]$ & $-(k-5)(k+6)(k+5)(k+4)(k+3)(k+2)(k+1)$ & $(k)_5/24$ \\ \hline
%$[3, 2, 1, 1]$ & $-2(k+4)(k+3)(k+2)(k+1)(2k^2+2k-45)$ & $(k)_4/2$ \\ \hline
%$[3, 2, 2]$ & $-10(k-3)(k+4)(k+3)(k+2)(k+1)$ & $(k)_3/2$ \\ \hline
%$[3, 3, 1]$ & $(k-3)(k-5)(k+6)(k+4)(k+3)(k+2)(k+1)$ & $(k)_3/2$ \\ \hline
%$[4, 1, 1, 1]$ & $-6(k+4)(k+3)(k+2)(k+1)(k^2+k-15)$ & $(k)_4/6$ \\ \hline
%$[4, 2, 1]$ & $-10(k-3)(k+4)(k+3)(k+2)(k+1)$ & $(k)_3$ \\ \hline
%$[4, 3]$ & $5(k-2)(k-3)(k+4)(k+3)(k+2)(k+1)$ & $(k)_2$ \\ \hline
%$[5, 1, 1]$ & $2(k-3)(k+4)(k+3)(k+2)(k+1)(k^2+k-15)$ & $(k)_3/2$ \\ \hline
%$[5, 2]$ & $5(k-2)(k-3)(k+4)(k+3)(k+2)(k+1)$ & $(k)_2$ \\ \hline
%$[6, 1]$ & $5(k-2)(k-3)(k+4)(k+3)(k+2)(k+1)$ & $(k)_2$ \\ \hline
%$[7]$ & $-5(k-1)(k-2)(k-3)(k+4)(k+3)(k+2)(k+1)$ & $(k)_1$ \\ \hline
\end{tabular}
}
\caption[$N_\lambda(k)$]
{We display the polynomials, from~\cite{GHRR}, $N_\lambda(k)$, for all $|\lambda| \leq 6$.
$N_\lambda(k)$ has, as a factor, the polynomial:
$r_\lambda(k):=\binom{k}{l(\lambda)}{l(\lambda)\choose
m_1(\lambda),m_2(\lambda),\dotsc} = (k)_{l(\lambda)}/(m_1(\lambda)!
m_2(\lambda)!\ldots)$, where $(k)_m = k(k-1)\ldots(k-m+1)$.
The polynomial $r_\lambda(k)$ counts the number of monomials in $m_\lambda(z)$.
Therefore,
we separate this factor out, and list $N_\lambda(k)/r_\lambda(k)$.
}
\end{table}
Thus, $B_\lambda(k)$ is equal to~\eqref{eq:diff} applied to
\small
\begin{eqnarray}
    \label{eq:log}
    \notag
    &&-\log(a_k)
    + \sum_{r=1}^\infty i_n(q)
    \Bigg(
        \sum_{1\le i \le j \le l}
        \log \left(1-\frac{1}{q^r e^{r(u_i+u_j)}}\right)
        +\sum_{1\leq i \leq l} (k-l) \log \left(1-\frac{1}{q^r e^{ru_i}} \right) \\
    \notag
    &&+
    \log
    \bigg(  \frac12
            \prod_{j=1}^{l}
            \left(1-\frac 1 {q^{\frac r 2}e^{ru_j}} \right)^{-1}
            \left(1-\frac 1 {q^{\frac r 2 }} \right)^{l-k}
             +
            \frac12
            \prod_{j=1}^{l}
            \left(1+\frac 1 {q^{\frac r 2}e^{ru_j}} \right)^{-1}
            \left(1+\frac 1 {q^{\frac r 2 }} \right)^{l-k}
            + q^{-r}
    \bigg) \\
    &&- \log \left(1+q^{-r} \right) \notag
    \Bigg)
    +
    \sum_{1\leq i\leq j \leq l} \log \left((u_i+u_j)(1-e^{-u_{i}-u_{j}})^{-1}\right)
    +\sum_{1\leq i \leq l} (k-l)\log \left(u_i (1-e^{-u_{i}})^{-1}\right) \notag \\
    &&+ \notag
    \begin{cases}
        0, &\text{if $d=2g+1$}, \\
        \frac12\sum_{j=1}^l \log \left(\frac{1-q^{-1/2} e^{u_j}}{1-q^{-1/2}e^{-u_j}}\right), &\text{if $d=2g+2$},
    \end{cases} \\
\end{eqnarray}
\normalsize
evaluated at $u_1=\ldots=u_l=0$.

Next, by composing the series expansions~\eqref{eq:log H} with the series for the exponential
function, we can derive formulas for the coefficients $b_\lambda(k)$.

In this way, we computed the following series expansions, if $d=2g+1$ is odd:
\begin{multline}
    Q_2(q;d) =
    \frac13\,{{\it g}}^{3}+\frac32\,{{\it g}}^{2}+{\frac {13}{6}}\,{\it g}+1
    +\frac{ -\frac43\,{{\it g}}^{3}-{{\it g}}^{2}+\frac43\,{\it g}+1 }{q}\\
    +\frac{ \frac{13}{3}\,{{\it g}}^{3}-\frac{13}{2}\,{{\it g}}^{2}+\frac16\,{\it g}+1 }{q^2}
    +\frac{ -{\frac {46}{3}}\,{{\it g}}^{3}+54\,{{\it g}}^{2}-{\frac {149}{3}}\,{\it g}+11 }{q^3}\\
    +\frac{ {\frac {163}{3}}\,{{\it g}}^{3}-{\frac {597}{2}}\,{{\it g}}^{2}+{\frac {2971}{6}}\,{\it g}-246 }{q^4}
    +\frac{ -{\frac {554}{3}}\,{{\it g}}^{3}+1376\,{{\it g}}^{2}-{\frac {9661}{3}}\,{\it g}+2364 }{q^5}\\
    +\frac{ {\frac {1826}{3}}\,{{\it g}}^{3}-5701\,{{\it g}}^{2}+{\frac {51295}{3}}\,{\it g}-16405 }{q^6}
    +\frac{ -1982\,{{\it g}}^{3}+22265\,{{\it g}}^{2}-80929\,{\it g}+95135 }{q^7} +\ldots,
\end{multline}
and
\small
\begin{multline}
    Q_3(q;d) =
    1/45\,{{\it g}}^{6}+{\frac {4}{15}}\,{{\it g}}^{5}+{\frac {47}{36}}\,{{\it g}}^{4}+\frac{10}{3}\,{{\it g}}^{3}+{\frac {841}{180}}\,{{\it g
    }}^{2}+{\frac {17}{5}}\,{\it g}+1
    +\frac{ -{\frac {4}{15}}\,{{\it g}}^{6}-\frac85\,{{\it g}}^{5}-3\,{{\it g}}^{4}-\frac43\,{{\it g}}^{3}+{\frac {34}{15}}\,{{\it g}}^{2}+{\frac {44} {15}}\,{\it g}+1 }{q} \\
    +\frac{ {\frac {101}{45}}\,{{\it g}}^{6}+{\frac {44}{15}}\,{{\it g}}^{5}-{\frac {245}{36}}\,{{\it g}}^{4}-13/3\,{{\it g}}^{3}-{\frac {79}{ 180}}\,{{\it g}}^{2}-\frac85\,{\it g}+2 }{q^2}
    +\frac{ -{\frac {764}{45}}\,{{\it g}}^{6}+{\frac {712}{15}}\,{{\it g}}^{5}+{\frac {110}{9}}\,{{\it g}}^{4}-{\frac {655}{6}}\,{{\it g}}^{3}+{ \frac {4309}{45}}\,{{\it g}}^{2}-{\frac {93}{10}}\,{\it g}-20 }{q^3} \\
    +\frac{{\frac {5416}{45}}\,{{\it g}}^{6}-{\frac {2408}{3}}\,{{\it g}}^{5}+{\frac {15317}{9}}\,{{\it g}}^{4}-{\frac {1615}{2}}\,{{\it g}}^{3 }-{\frac {69446}{45}}\,{{\it g}}^{2}+{\frac {10303}{6}}\,{\it g}-244 }{q^4} \\
    +\frac{ -{\frac {36469}{45}}\,{{\it g}}^{6}+{\frac {126548}{15}}\,{{\it g}}^{5}-{\frac {1175831}{36}}\,{{\it g}}^{4}+{\frac {112353}{2}}\,{{ \it g}}^{3}-{\frac {6151429}{180}}\,{{\it g}}^{2}-{\frac {295321}{30}}\,{\it g}+11168 }{q^5} \\
    +\frac{ {\frac {236128}{45}}\,{{\it g}}^{6}-{\frac {1105616}{15}}\,{{\it g}}^{5}+{\frac {3631316}{9}}\,{{\it g}}^{4}-1076052\,{{\it g}}^{3}+ {\frac {62711692}{45}}\,{{\it g}}^{2}-{\frac {10542424}{15}}\,{\it g}+19372 }{q^6}\\
    +\frac{ -{\frac {494627}{15}}\,{{\it g}}^{6}+{\frac {8705044}{15}}\,{{\it g}}^{5}-{\frac {48772345}{12}}\,{{\it g}}^{4}+{\frac {28666535}{2}} \,{{\it g}}^{3}-{\frac {1575047267}{60}}\,{{\it g}}^{2}+{\frac {678778057}{30}}\,{\it g}-6415066 }{q^7}+\ldots
\end{multline}
\normalsize
Note that the terms that are independent of $q$ (for example, $\frac13\,{{\it g}}^{3}+\frac32\,{{\it g}}^{2}+{\frac {13}{6}}\,{\it g}+1$
in $Q_2(q;d)$), match the right side of \eqref{eq:KeS}. This is explained by the fact that, as $q\to \infty$,
\begin{eqnarray}
    H(u_1,\dots,u_k) \to
    \prod_{1\leq i\le j \leq k}
    (1-e^{-u_{i}-u_{j}})^{-1},
\end{eqnarray}
and we recover the moments of unitary sympletic matrices as given in~\eqref{eq:k fold integral USp(2g)}.

If $d=2g+2$ is even we have
\begin{multline}
    Q_2(q;d) =
    \frac13\,{{\it g}}^{3}+\frac32\,{{\it g}}^{2}+{\frac {13}{6}}\,{\it g}+1+
    {\frac {-{g}^{2}-3\,g-2}{{q}^{1/2}}} +
{\frac {-4/3\,{g}^{3}-2\,{g}^{2}-2/3\,g+1}{q}}\\ +
{\frac {3\,{g}^{2}+g}{{q}^{3/2}}} +
 \frac{ 13/3\,{g}^{3}-7/2\,{g}^{2}-{\frac {11}{6}}\,g }{ {q}^{2}} +
{\frac {-10\,{g}^{2}+8\,g}{{q}^{5/2}}} +
 \frac{ -{\frac {46}{3}}\,{g}^{3}+44\,{g}^{2}-{\frac {95}{3}}\,g+10 }{ {q}^{3}}\\ +
{\frac {36\,{g}^{2}-80\,g+40}{{q}^{7/2}}} +
\frac{{\frac {163}{3}}\,{g}^{3}-{\frac {525}{2}}\,{g}^{2}+{\frac {2275}{6}}\,g-176} { {q}^{4}} +
{\frac {-127\,{g}^{2}+445\,g-368}{{q}^{9/2}}}\\ +
 \frac{ -{\frac {554}{3}}\,{g}^{3}+1249\,{g}^{2}-{\frac {7945}{3}}\,g+1809 }{ {q}^{5}} +
{\frac {427\,{g}^{2}-2053\,g+2386}{{q}^{11/2}}} +
 \frac{ {\frac {1826}{3}}\,{g}^{3}-5274\,{g}^{2}+{\frac {43855}{3}}\,g-13107 }{ {q}^{6}}\\ +
{\frac {-1399\,{g}^{2}+8495\,g-12584}{{q}^{13/2}}} +
{\frac {-1982\,{g}^{3}+20866\,{g}^{2}-71035\,g+78675}{{q}^{7}}} +\ldots,
\end{multline}
and
\begin{multline}
    Q_3(q;d) =
1/45\,{g}^{6}+{\frac {4}{15}}\,{g}^{5}+{\frac {47}{36}}\,{g}^{4}+10/3\,{g}^{3}+{\frac {841}{180}}\,{
g}^{2}+{\frac {17}{5}}\,g+1
 +\frac{ -2/15\,{g}^{5}-4/3\,{g}^{4}-{\frac {31}{6}}\,{g}^{3}-{\frac {29}{3}}\,{g}^{2}-{\frac {87}{10
}}\,g-3 }{q^{1/2}} \\
 +\frac{ -{\frac {4}{15}}\,{g}^{6}-{\frac {26}{15}}\,{g}^{5}-4\,{g}^{4}-11/3\,{g}^{3}+{\frac {19}{15}
}\,{g}^{2}+{\frac {27}{5}}\,g+3 }{ {q}^{1}}
 +\frac{ {\frac {22}{15}}\,{g}^{5}+{\frac {22}{3}}\,{g}^{4}+23/2\,{g}^{3}+{\frac {43}{6}}\,{g}^{2}+{
\frac {23}{15}}\,g-1 }{ {q}^{3/2}} \\
 +\frac{ {\frac {101}{45}}\,{g}^{6}+{\frac {22}{5}}\,{g}^{5}-{\frac {113}{36}}\,{g}^{4}-{\frac {26}{3
}}\,{g}^{3}-{\frac {2089}{180}}\,{g}^{2}-{\frac {127}{30}}\,g+1 }{ {q}^{2}}
 +\frac{ -12\,{g}^{5}-{\frac {44}{3}}\,{g}^{4}+{\frac {49}{3}}\,{g}^{3}+8/3\,{g}^{2}+{\frac {23}{3}}
\,g+3 }{ {q}^{5/2}} \\
 +\frac{ -{\frac {764}{45}}\,{g}^{6}+{\frac {532}{15}}\,{g}^{5}+{\frac {248}{9}}\,{g}^{4}-{\frac {331
}{6}}\,{g}^{3}+{\frac {2899}{45}}\,{g}^{2}-{\frac {133}{10}}\,g-15 }{ {q}^{3}}
 +\frac{ {\frac {1348}{15}}\,{g}^{5}-192\,{g}^{4}-{\frac {89}{3}}\,{g}^{3}+{\frac {529}{2}}\,{g}^{2}-
{\frac {2037}{10}}\,g+17 }{ {q}^{7/2}} \\
 +\frac{ {\frac {5416}{45}}\,{g}^{6}-{\frac {3564}{5}}\,{g}^{5}+{\frac {11567}{9}}\,{g}^{4}-{\frac {
3073}{6}}\,{g}^{3}-{\frac {94327}{90}}\,{g}^{2}+{\frac {17272}{15}}\,g-160 }{ {q}^{4}} \\
 +\frac{ -{\frac {9484}{15}}\,{g}^{5}+3372\,{g}^{4}-{\frac {16985}{3}}\,{g}^{3}+2114\,{g}^{2}+{\frac 
{41309}{15}}\,g-1694 }{ {q}^{9/2}} \\
 +\frac{ -{\frac {36469}{45}}\,{g}^{6}+{\frac {117064}{15}}\,{g}^{5}-{\frac {997535}{36}}\,{g}^{4}+{
\frac {264991}{6}}\,{g}^{3}-{\frac {4606789}{180}}\,{g}^{2}-{\frac {192403}{30}}\,g+7225 }{ {q}
^{5}} \\
 +\frac{ {\frac {63454}{15}}\,{g}^{5}-{\frac {106948}{3}}\,{g}^{4}+{\frac {652081}{6}}\,{g}^{3}-{
\frac {837925}{6}}\,{g}^{2}+{\frac {836686}{15}}\,g+10251 }{ {q}^{11/2}} \\
 +\frac{ {\frac {236128}{45}}\,{g}^{6}-{\frac {1042162}{15}}\,{g}^{5}+{\frac {3215291}{9}}\,{g}^{4}-{
\frac {2695787}{3}}\,{g}^{3}+{\frac {99784049}{90}}\,{g}^{2}-{\frac {16340081}{30}}\,g+24566
 }{ {q}^{6}} \\
\displaybreak[1]
 +\frac{ -{\frac {408802}{15}}\,{g}^{5}+311738\,{g}^{4}-{\frac {8063951}{6}}\,{g}^{3}+2664154\,{g}^{2
}-{\frac {22836967}{10}}\,g+559196 }{ {q}^{13/2}} \\
 +\frac{ -{\frac {494627}{15}}\,{g}^{6}+{\frac {2765414}{5}}\,{g}^{5}-{\frac {44213885}{12}}\,{g}^{4}
+{\frac {24762961}{2}}\,{g}^{3}-{\frac {434205189}{20}}\,{g}^{2}+{\frac {180627927}{10}}\,g-5043319
 }{ {q}^{7}} + \ldots. \\
\end{multline}

Substituting $d=1,2,3,4,5,6,7$ into the above formulas for $Q_2(q;d)$ gives Table \ref{table:comparison k=2}.
\begin{table}[H]
\begin{tabular}{c|c}
$d$ & $Q_2(q;d)$ \\ \hline
1& $1+O(q^{-1})$ \\
2& $1-2\,q^{-1/2}+q^{-1} +O(q^{-3})$ \\
3& $5-{q}^{-2}+O(q^{-4})$ \\
4& $5-6\,q^{-1/2}-3\,{q}^{-1}+4\,{q}^{-3/2}-{q}^{-2}-2\,{q}^{-5/2}+7\,{q}^{-3}-4\,{q}^{-7/2}+O(q^{-5})$ \\
5& $14-11\,{q}^{-1}+10\,{q}^{-2}+5\,{q}^{-3}-15\,{q}^{-4}+O(q^{-5})$ \\
6& $14-12\,{q}^{-1/2}-19\,{q}^{-1}+14\,{q}^{-3/2}+17\,{q}^{-2}-24\,{q}^{-5/2}+24\,{q}^{-7/2}$\\
 & $-33\,{q}^{-4}+14\,{q}^{-9/2}+O({q}^{-5})$ \\
7& $30-40\,{q}^{-1}+60\,{q}^{-2}-66\,{q}^{-3}+20\,{q}^{-4}+101\,{q}^{-5}+O(q^{-6})$ \\
\end{tabular}
\caption{Expansion of $Q_2(q;d)$ in the $q$-aspect, for $d\leq 7$.}
\label{table:comparison k=2}
\end{table}

For $Q_3(q;d)$, this yields table \ref{table:comparison k=3}.
\begin{table}[H]
\begin{tabular}{c|c}
$d$ & $Q_3(q;d)$ \\ \hline
1& $1+O(q^{-1})$ \\
2& $1-3\,q^{-1/2}+3\,q^{-1} -q^{-3/2} + O(q^{-2})$ \\
3& $14-6{q}^{-2}+O(q^{-4})$ \\
4& $14-28\,q^{-1/2}+28\,{q}^{-3/2}-20\,{q}^{-2}+3\,{q}^{-5/2}+27\,{q}^{-3}+O({q}^{-7/2})$\\
5& $84-111\,{q}^{-1}+91\,{q}^{-2}+O(q^{-3})$ \\
7& $330-832\,{q}^{-1}+1674\,{q}^{-2}-1986\,{q}^{-3}-240\,{q}^{-4}+O(q^{-5})$
\end{tabular}
\caption{Expansion of $Q_3(q;d)$ in the $q$-aspect, $d\leq 7$, except
$d=6$ where we did not have enough data to guess the moment formula for $k=3$.}
\label{table:comparison k=3}
\end{table}
Again, we are displaying the terms that match with the actual moments from Section \ref{sec:guessing}.
Letting $X=q^d$, the above expansions yield the values of $\mu$ presented in Section~\ref{sec:data}.

\section{Algorithms used}
\label{sec:algorithms}

To tabulate zeta functions, we first looped though all monic polynomials $D(x)$
of given degree $d$ in $\tilde{\mathcal H}_{q,d}$ or $\mathcal H_{q,d}$, and,
for each $D$, checked whether $\gcd(D,D')=1$ to determine if $D$ is
square-free. We then used the approximate functional equation
described in Section~\ref{sec:funct eqn} and quadratic reciprocity to determine
each zeta functions for all  $D(x) \in \tilde{\mathcal{H}}_{q,d}$ (when $p\not|
d$), or $D(X) \in \mathcal{H}_{q,d}$ (when $p|d$), and the values of $d,q$
listed in Section~\ref{sec:data}. We implemented our code in {\tt C++} using
the {\tt flint} package~\cite{HJP} for finite field arithmetic. However, this
became prohibitive as $|\tilde{\mathcal{H}}_{q,d}| = q^{d-1}-q^{d-2}$, and each
application of the approximate functional equation requiring roughly $q^g$
evaluations of $\chi_D(n)$ via quadratic reciprocity.

After gathering some data in this fashion, we switched to using Magma's built in
routine for computing the zeta function of a hyperelliptic curve. It uses a combination of
exponential point counting methods
and Kedlaya's algorithm~\cite{K}. Let $q=p^n$. The latter algorithm runs in time
$O(p^{1+\epsilon} g^{4+\epsilon} n^{3+\epsilon})$, for any $\epsilon >0$, with the implied
constant depending on $\epsilon$. See Theorem 3.1 of~\cite{GG}.

The other computational aspect of testing the Andrade-Keating conjecture involved
numerically evaluating the coefficients
of the polynomials $Q_k(q;d)$. While formula~\eqref{eq:Q cleaner}
can be used to evaluate a few coefficients $c_r(q;k)$ of the polynomials $Q_k(q;d)$,
it is not well suited for computing all $k(k+1)/2$ coefficients, except when
$k$ is small. For example, we took~\eqref{eq:Q cleaner} as our starting point in the previous section
to work out, via~\eqref{eq:c_r}, formulas for $Q_k(q;d)$ for $k=1,2,3$.
However, it is not feasible, to compute, in this manner, all 55 coefficients,
$c_r(q;k)$, when say, $k=10$, as this would involve expanding the integrand in a series of of ten
variables using monomials of degree $\leq 55$. Instead, we used a technique
that was developed in the number field setting. See Sections 3 of~\cite{RY} and
4.2 of~\cite{CFKRS2}. We summarize the method, as applied in our setting, below.

Lemma 2.5.2 of~\cite{CFKRS} plays a key role, and we first paraphrase the part we need:
\begin{lemma}[from \cite{CFKRS}]
\label{lemma:cfkrs}

 Suppose $F$ is a symmetric function of $k$
variables, regular near $(0,\ldots,0)$, and $f(s)$ has a simple
pole of residue~$1$ at $s=0$ and is otherwise analytic in a
neighborhood of $s=0$, and let
\begin{equation}
K(a_1,\ldots,a_k)=F(a_1,\ldots,a_k) \prod_{1\leq i \leq j\leq k}
f(a_i+a_j)
\end{equation}
Assume $|\alpha_i|\neq |\alpha_j|$ if $i\neq j$.
Then, for sufficiently small $|\alpha_j|$,
\begin{multline}
    \label{eq:comb sum}
    \sum_{\epsilon_j =\pm 1 }   K(\epsilon_1 \alpha_1,\ldots,
    \epsilon_k\alpha_k) = \\
    \ \ \ \ \frac{(-1)^{k(k-1)/2} } {(2\pi
    i)^k} \frac{2^k}{k!} \oint \cdots \oint K(z_1,\ldots,z_k)
    \frac{\Delta(z_1^2,\ldots,z_k^2)^2 \prod_{j=1}^k z_j }
    {\prod_{i=1}^k\prod_{j=1}^k (z_i-\alpha_j)(z_i+\alpha_j)}
    \,dz_1\cdots dz_k ,
\end{multline}
and where the path of integration encloses the $\pm \alpha_j$'s.
\end{lemma}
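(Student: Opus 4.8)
The plan is to prove the identity in Lemma~\ref{lemma:cfkrs} by evaluating the $k$-fold contour integral on the right-hand side via the residue theorem, picking up the residues at the poles $z_i = \pm \alpha_j$ inside the contours, and showing that the resulting sum collapses exactly to the symmetrized sum on the left. The only poles of the integrand inside the region of integration come from the denominator factor $\prod_{i,j}(z_i - \alpha_j)(z_i + \alpha_j)$; the hypotheses guarantee that $F$ is regular near the origin and that the poles of each $f(z_i + z_j)$ (located at $z_i + z_j = 0$) are cancelled by a factor in the Vandermonde-type numerator $\Delta(z_1^2,\ldots,z_k^2)^2 \prod_j z_j$. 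So the integrand, as a function of each $z_i$ separately, is meromorphic with simple poles precisely at the $2k$ points $\{\pm \alpha_j\}_{j=1}^k$.

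\medskip

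First I would integrate one variable at a time. Fixing $z_2,\ldots,z_k$, the integral over $z_1$ (by Cauchy's residue theorem, since the contour encloses all the $\pm\alpha_j$) equals the sum of the residues at $z_1 = \epsilon_1 \alpha_{\sigma(1)}$ over choices of sign $\epsilon_1 = \pm 1$ and index $\sigma(1)$. Because $|\alpha_i| \neq |\alpha_j|$ for $i \neq j$, all $2k$ poles are simple and distinct, so each residue is computed by the usual rule of cancelling the offending linear factor. Iterating this over $z_2, \ldots, z_k$ produces a sum over all ways of assigning to each integration variable one of the signed points $\epsilon_i \alpha_{\sigma(i)}$. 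The key combinatorial observation is that the squared Vandermonde $\Delta(z_1^2,\ldots,z_k^2)^2$ vanishes whenever two of the $z_i^2$ coincide, i.e.\ whenever two variables are sent to $\pm\alpha$ with the \emph{same} $|\alpha|$; hence every surviving term must assign the $k$ variables to $k$ \emph{distinct} indices $\sigma$, that is, $\sigma$ is a permutation. After summing over all permutations $\sigma \in S_k$ (which, together with the $1/k!$ prefactor, collapses the $k!$ identical contributions into one) and over the sign choices $\epsilon_j = \pm 1$, each term reduces to $K(\epsilon_1\alpha_1, \ldots, \epsilon_k \alpha_k)$.

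\medskip

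The hard part will be tracking the constant factors and signs through the residue computation: verifying that the Jacobian-like factor $\Delta(z_1^2,\ldots,z_k^2)^2 \prod_j z_j$, when evaluated at $z_i = \epsilon_i \alpha_{\sigma(i)}$ and divided by the product of the uncancelled denominator factors $\prod_{i\neq \sigma^{-1}}(z_i \mp \alpha_j)$, contributes exactly $1$ after all cancellations, leaving only $K$ evaluated at the signed arguments. One checks that the factor $\prod_j z_j$ in the numerator supplies the residue of $z_i$ at $\epsilon_i\alpha_{\sigma(i)}$ correctly, that the two copies of $\Delta(z_i^2 - z_j^2)$ in the numerator pair off against the cross terms $\prod_{i \neq i'}(z_i^2 - \alpha_{j}^2)$ in the denominator, and that the overall sign $(-1)^{k(k-1)/2}$ together with the $2^k$ and $k!$ prefactors is precisely what is needed to reconcile the orientation and multiplicity of the residues with the plain symmetric sum. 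This bookkeeping is the substance of the argument; the analytic content (residue theorem, vanishing of the squared Vandermonde on diagonals) is standard, and I would present it as the function-field instance of the corresponding calculation in~\cite{CFKRS}, to which the reader may be referred for the detailed verification of constants.
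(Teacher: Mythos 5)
Your residue-theorem outline is correct and is exactly the standard argument behind this identity: the paper itself gives no proof, quoting the statement verbatim as Lemma 2.5.2 of \cite{CFKRS}, and the proof there is precisely the computation you describe (poles only at $z_i=\pm\alpha_j$ since $\Delta(z_1^2,\dots,z_k^2)^2\prod_j z_j$ cancels the poles of the $f(z_i+z_j)$, the squared Vandermonde kills all residue terms with repeated indices, symmetry of $K$ absorbs the $k!$, and the sign $(-1)^{k(k-1)/2}$ offsets the product $\prod_{i\neq i'}(\alpha_{\sigma(i)}^2-\alpha_{\sigma(i')}^2)$ left in the denominator). The constant bookkeeping you defer does check out (e.g.\ for $k=1,2$ each signed residue contributes $(-1)^{k(k-1)/2}2^{-k}K$), so deferring that verification to \cite{CFKRS} is consistent with what the authors themselves do.
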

Note that the poles of $K$ from the product of $f$'s are cancelled by a portion of
the factor $\Delta(z_1^2,\ldots,z_k^2)^2$. The condition that
$|\alpha_j|$ be sufficiently small is needed to ensure that
the numerator of the integrand in~\eqref{eq:comb sum} is analytic in and on the contours.

To compute $c_r(q;k)$ we do two things. First, we expand the exponential in~\eqref{eq:Q cleaner}
to get:
\begin{eqnarray}
    c_r(q;k)&=&\frac{1}{2^{k(k+1)/2-r}(k(k+1)/2-r)!}
    \frac{(-1)^{k(k-1)/2}2^k}{k!}
    \frac{1}{(2\pi i)^{k}}
    \oint \cdots \oint \notag \\
    &&\frac{H(u_1, \dots,u_{k})\Delta(u_1^2,\dots,u_{k}^2)^2}
    {\prod_{j=1}^{k} u_j^{2k-1}}
    \left(\sum_{j=1}^{k}u_j\right)^{k(k+1)/2-r}\,du_1\dots du_{k}.
\end{eqnarray}
Next we view the above as the limiting case of~\eqref{eq:comb sum}, $\alpha_j \to 0$,
with
\begin{eqnarray}
    \notag
    &&K(a_1,\dots,a_k)=\frac{1}{2^{k(k+1)/2-r}(k(k+1)/2-r)!}
    H(a_1,\ldots,a_k) \left(\sum_{j=1}^{k}a_j\right)^{k(k+1)/2-r},
\end{eqnarray}
and evaluate it by summing the $2^k$ terms on the left side of~\eqref{eq:comb sum}.
In practice, we took $a_j=j 10^{-65}$. Now the terms being summed have poles of order $k(k+1)/2$
that cancel as we sum all the terms. One can see that they must cancel since the expression on
the right side of~\eqref{eq:comb sum} is analytic in a neighbourhood of $\alpha= 0$.
Thus, to see our way through the enormous cancellation that takes place, we used, for example when $k=10$,
thousands of digits of working precision.

One advantage here, over the number field setting, is that the arithmetic product $A$,
defined in~\eqref{eq:A}, as expressed in~\eqref{eq:H} (i.e. grouping together irreducible polynomials
$P \in \F[x]$ according to their degree $n$), converges very quickly. The relative remainder term
in truncating the product over $n$ in~\eqref{eq:H} at $n\leq N$, is, for sufficiently small
$u_j$, $O(q^{-N-1+\epsilon})$, with the implied constant depending on $\epsilon$. Thus, only
a few hundred (for $q=3$) or handful (for $q=10009$) of $n$ were needed to achieve at least 30
digits precision for all $c_r(q;k)$ that we computed.

\section{Acknowledgments}
We thank Julio Andrade, Adrian Diaconu, Jon Keating, and Vicentiu Pasol for helpful feedback.

\end{document}